\newcommand{\newV}[1]{#1}
\newcommand{\oldV}[1]{\iffalse{#1}\fi}
\newcommand{\nnewV}[1]{{{#1}}}
\newcommand{\noldV}[1]{{\iffalse{#1}\fi}}
\newcommand{\bothV}[2]{\oldV{#1}\newV{#2}}
\newcommand{\nbothV}[2]{\noldV{#1}\nnewV{#2}}
\def\NN{\mathbb{N}}
\renewcommand{\qedsymbol}{\openbox}
\newcommand*\samethanks[1][\value{footnote}]{\footnotemark[#1]}
\author{
Christoph Hertrich\thanks{All three authors were supported by DFG-GRK 2434 Facets of Complexity.} \qquad Felix Schr\"{o}der\samethanks \qquad Raphael Steiner\samethanks
\\
\small Institute of Mathematics \\[-0.8ex]
\small Technische Universit\"{a}t Berlin \\[-0.8ex]
\small Germany, E.U.\\
\small\tt \string{hertrich,fschroed,steiner\string}@math.tu-berlin.de\\
}
\title{Coloring Drawings of Graphs}
\begin{document}
\maketitle
\begin{abstract}
We consider cell colorings of drawings of graphs in the plane. 
Given a multi-graph $G$ together with a drawing $\Gamma(G)$ in the plane with only finitely many crossings, we define a \emph{cell $k$-coloring} of $\Gamma(G)$ to be a coloring of the maximal connected regions of the drawing,
 the \emph{{cells}}, with $k$ colors such that adjacent {cells} have different colors. By the $4$-color theorem, every drawing of a
  bridgeless graph has a {cell }$4$-coloring. A drawing of a graph is {cell }$2$-colorable if and only if the underlying graph is
   Eulerian. We show that every graph without degree 1 vertices admits a {cell }$3$-colorable drawing. This leads to the natural question which \emph{abstract} graphs have the property that each of {their} drawings has a cell $3$-coloring. We say that such a
    graph is \emph{{universally cell }$3$-colorable}. We show that every $4$-edge-connected graph and every graph admitting a \emph{nowhere-zero $3$-flow} is {universally cell} $3$-colorable.
     We also discuss circumstances under which {universal cell} $3$-colorability guarantees the existence of a nowhere-zero $3$-flow. On the negative side, we present an infinite family of {universally cell }$3$-colorable graphs without a nowhere-zero $3$-flow. On the positive side,
      we formulate a conjecture which has a surprising relation to a famous open problem by Tutte known as the \emph{$3$-flow-conjecture}.
       We prove our conjecture for subcubic and for $K_{3,3}$-minor-free graphs.
\end{abstract}
\section{Introduction} 

Graph coloring is one of the earliest and most influential branches of graph theory, whose first occurences date back more than 150 years. Maybe the most celebrated problem in graph theory is the $4$-color-problem, asking whether the bounded regions of every planar map can be colored using four colors such that regions sharing a common border receive different colors. This problem was finally resolved in the positive in 1972 when Appel and Haken~\cite{4CT1,4CT2} presented a computer-assisted proof of their famous \emph{$4$-Color-Theorem}, which formally states that the chromatic number of every planar graph is at most {$4$}.

In this paper we combine the topics of graph coloring and graph drawing by studying colorings of planar maps arising from drawings\footnote{{Drawings of graphs considered in this paper are allowed to be of a very general form, in that we do not require simplicity of the drawings: Edges are allowed to cross each other arbitrarily often (as long as their intersection consists of finitely many points), even if they share a common endpoint. Self-crossings of edges are not excluded either. A precise definition of what we mean by a drawing of a graph is given at the end of the introduction.}} of possibly \emph{non-{planar}} graphs. Formally, a \emph{cell $k$-coloring} of a drawing $\Gamma$ is a proper coloring of the dual graph $G^\top(\Gamma)$ of $\Gamma$, {that is}, a coloring $c\colon\mathcal{F}(\Gamma) \rightarrow \{0,\ldots,k-1\}$ of the {cells} such that for any two cells $f_1, f_2$ which are \emph{adjacent} in $\Gamma$ ({that is}, they share a common segment of an edge), we have $c(f_1) \neq c(f_2)$. See Fig.~\ref{wild_leftside} for an example. 
\begin{figure}[ht]
\centering
\includegraphics[scale=1]{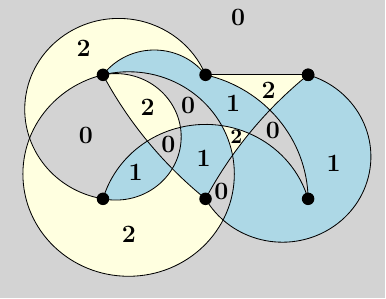}
\caption{A {cell }$3$-colored drawing of a graph.}\label{wild_leftside}
\end{figure}

Note that in a drawing $\Gamma$ it might occur that a {cell} $f$ is adjacent to itself, in which case no {cell }coloring can exist, compare {Fig.}~\ref{selftouch}.

\begin{figure}[ht]
\centering
\includegraphics[scale=0.8]{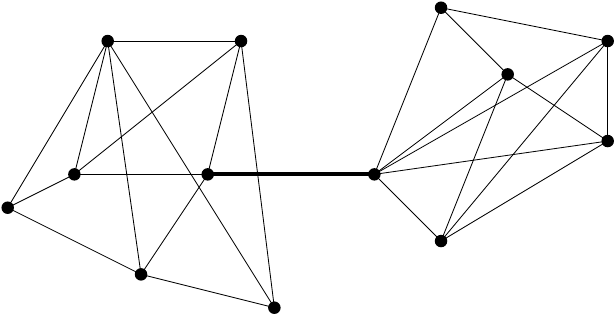}
\caption{A drawing of a graph with a self-touching outer {cell}, caused by the existence of a bridge (edge marked fat) in the underlying graph.}\label{selftouch}
\end{figure}

However, in this case the edge involved in the self-touching must be a bridge of the underlying abstract graph, hence, self-touchings do not occur in drawings of bridgeless graphs. This justifies that most of our results are formulated only for the setting of bridgeless graphs. 
Using the 4-Color-Theorem, we directly see that four colors are sufficient to {cell }color bridgeless graphs.

\begin{proposition}\label{obvious}
Every drawing of a bridgeless graph admits a {cell }$4$-coloring. 
\end{proposition}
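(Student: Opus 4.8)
The plan is to reduce the statement to the Four-Color-Theorem applied to the planar dual of the \emph{planarization} of the drawing. Given a drawing $\Gamma$ of a bridgeless multi\=/graph $G$, first I would form the planarized drawing $H$: replace each crossing point by a new vertex of degree $4$, so that each edge of $G$ is subdivided into a path whose internal vertices are exactly the crossings lying on it. Since $\Gamma$ has only finitely many crossings, $H$ is a finite plane (multi)graph, and by construction $\mathcal{F}(\Gamma)=\mathcal{F}(H)$, with two faces being adjacent in $\Gamma$ if and only if they share an edge of $H$. Hence the face-adjacency graph $G^\top(\Gamma)$ is precisely the planar dual $H^\ast$, which in particular is again a planar multigraph. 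A face-$4$-coloring of $\Gamma$ is by definition the same thing as a proper $4$-coloring of $G^\top(\Gamma)=H^\ast$, so it suffices to $4$-color $H^\ast$.

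The one point that needs care is that $H^\ast$ has no loops, equivalently (by planar duality) that $H$ has no bridges — a loop in $H^\ast$ corresponds exactly to a face of $\Gamma$ that is adjacent to itself, the situation of Figure~\ref{selftouch}. I would prove that $H$ is bridgeless directly from the bridgelessness of $G$. Take any edge-segment $s$ of $H$; it lies on some edge $e$ of $G$, and since $G$ is bridgeless, $e$ lies on a cycle $C$ of $G$. Now restrict $\Gamma$ to the drawing of $C$ alone: this is a closed curve $\gamma$ in the plane with finitely many self-intersections, whose planarization $H_C$ is connected and has all degrees even (the original vertices of $C$ have degree $2$, the self-crossings of $\gamma$ have degree $4$). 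A connected graph with all degrees even is bridgeless, since deleting a bridge would leave a component containing exactly one odd-degree vertex, contradicting the handshake lemma. Therefore $s$ (or a refinement of it) lies on a cycle in $H_C$, hence on a cycle in $H$; since $s$ was arbitrary, $H$ is bridgeless, so $H^\ast$ is loopless.

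Finally, $H^\ast$ is a loopless planar multigraph (parallel edges are harmless, as properness only constrains endpoints of distinct color), so by the Four-Color-Theorem~\cite{4CT1,4CT2} it admits a proper $4$-coloring, which is the desired face-$4$-coloring of $\Gamma$. I expect the main obstacle to be the middle step — verifying that passing to the planarization cannot turn a bridgeless graph into one with bridges — while the reduction to the dual and the invocation of the Four-Color-Theorem are routine bookkeeping.
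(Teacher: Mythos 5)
Your proposal is correct and takes essentially the same route as the paper: planarize the drawing, pass to the planar dual, and invoke the Four-Color-Theorem, with the only substantive point being that the dual is loopless because the planarization inherits bridgelessness. The sole difference is that you prove this last step in detail (via cycles of $G$ and the even-degree/handshake argument), whereas the paper simply asserts that bridgelessness of $G$ implies bridgelessness of $G_\text{isc}(\Gamma)$; your extra verification is sound, up to the harmless over-specification that intersection vertices have degree exactly $4$ (in general they have even degree, which is all the argument needs).
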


\begin{proof}
Let $\Gamma$ be a drawing of {a} bridgeless graph $G$. Let $G_\text{isc}(\Gamma)$ be the planar graph obtained from $G$ by introducing new vertices at edge intersections in $\Gamma$ and subdividing crossing edges at these new vertices. Moreover, let $G^\top(\Gamma)$ be the planar dual of $G_\text{isc}(\Gamma)$. Clearly, $\Gamma$ has a {cell }$4$-coloring if and only if $G^\top(\Gamma)$ has a proper $4$-vertex-coloring. Since $G$ is bridgeless, so is $G_\text{isc}(\Gamma)$, and therefore $G^\top(\Gamma)$ is loopless. The $4$-Color-Theorem now implies that $\chi(G^\top(\Gamma)) \le 4$, which proves the claim.
\end{proof}

In Section~\ref{sec:2colors}, we start our investigation of {cell }colorings of graphs by characterising the drawings whose {cells} can be properly colored using only two colors. Recall that a graph is called \emph{Eulerian} if all its vertices have even degree. 

\begin{restatable}{proposition}{restateeulerian}\label{eulerian}
A drawing $\Gamma$ of a graph $G$ has a {cell }$2$-coloring if and only if $G$ is Eulerian.
\end{restatable}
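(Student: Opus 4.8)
The plan is to reduce the statement to the classical planar case and then prove that case. As in the proof of Proposition~\ref{obvious}, let $H := G_{\text{isc}}(\Gamma)$ be the plane multigraph obtained from $\Gamma$ by planarizing it, i.e., placing a new vertex at every crossing point and subdividing the edge-strands running through it. The regions of the plane cut out by $\Gamma$ agree with those cut out by the embedding of $H$, and two faces share a segment of an edge of $\Gamma$ exactly when they share a segment of an edge of $H$; hence $\Gamma$ has a face-$2$-coloring if and only if $H$ does. Moreover, a new vertex at a point where $k\ge 2$ strands cross has degree $2k$, while the original vertices of $G$ keep their degrees, so $H$ is Eulerian if and only if $G$ is. Thus everything reduces to showing: a plane multigraph $H$ has a face-$2$-coloring if and only if $H$ is Eulerian. (Equivalently, in the language already introduced, $G^\top(\Gamma)$ is bipartite iff $G_{\text{isc}}(\Gamma)$ is Eulerian, which is essentially a classical planar-duality fact; but I would rather give the short self-contained argument below.)

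\emph{Necessity.} Suppose $c$ is a face-$2$-coloring of $H$ and let $v$ be a vertex of degree $d$. Reading the edge-ends at $v$ in rotational order produces a cyclic sequence of $d$ face-corners $f_1,\dots,f_d$, where consecutive ones $f_i,f_{i+1}$ (indices mod $d$) are separated by an edge-end, hence are adjacent, hence $c(f_i)\neq c(f_{i+1})$. With only two colors, the cyclic word $c(f_1),\dots,c(f_d)$ must strictly alternate, which is only possible for $d$ even. As $v$ was arbitrary, $H$ is Eulerian.

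\emph{Sufficiency.} Assume $H$ is Eulerian. After a small perturbation that does not change the faces we may take the edges of $H$ to be polygonal arcs, so that a curve in general position meets the union $E$ of the open edges transversally in finitely many points and misses $V(H)$. Fix a basepoint $o\notin H$. For each face $f$ pick $p_f\in f$ and a generic arc $\gamma_f$ from $p_f$ to $o$, and set $c(f):=|\gamma_f\cap E|\bmod 2$. This is well defined: concatenating two such arcs for the same face gives a closed generic curve $\gamma$, and
\[
|\gamma\cap E| \;=\; \sum_{e=uw\,\in E(H)} |\gamma\cap e| \;\equiv\; \sum_{e=uw} \bigl(\mathrm{wind}_\gamma(u)+\mathrm{wind}_\gamma(w)\bigr) \;=\; \sum_{v\in V(H)} \deg_H(v)\,\mathrm{wind}_\gamma(v) \;\equiv\; 0 \pmod 2,
\]
where the middle congruence is the standard fact that an arc crosses a closed curve an even number of times iff its two endpoints have equal winding number, and the last step uses that every $\deg_H(v)$ is even (a loop $e=uu$ just contributes $2\,\mathrm{wind}_\gamma(u)$). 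Finally, if $f_1\neq f_2$ are adjacent along a segment $s$ of some edge, join points $p_{f_1},p_{f_2}$ near $s$ on opposite sides by a tiny arc crossing $E$ exactly once; prepending it to $\gamma_{f_1}$ shows $c(f_2)\equiv c(f_1)+1\pmod 2$. Hence $c$ is a proper face-$2$-coloring, completing the proof.

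\emph{Where the work is.} The necessity direction and the reduction are bookkeeping. The technical heart is the well-definedness of the parity coloring in the sufficiency direction: it relies on the winding-number crossing lemma and on the freedom to put curves (and edges) in general position in the plane. In the final write-up I would either cite the classical ``plane graph is $2$-face-colorable iff Eulerian'' statement outright or spell out the winding-number argument above, and I expect the only extra care to concern genericity and the minor cases of loops, isolated vertices, and a disconnected $H$ (all harmless, since the argument never uses connectivity).
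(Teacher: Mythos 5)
Your proof is correct, and for the substantive direction it takes a genuinely different route from the paper. Both arguments share the easy ingredients: necessity via the alternation of face colors around each vertex (your version is essentially identical to the paper's), and the passage to the planarization $G_\text{isc}(\Gamma)$, all of whose crossing vertices have even degree, so that Eulerianness and face-colorability transfer between $\Gamma$ and its planarization. Where the paper then finishes sufficiency by quoting the classical duality fact that the planar dual of an Eulerian plane graph is bipartite (citing Welsh) and reading the bipartition as a face-$2$-coloring, you prove that planar fact from scratch: you color a face by the parity of the number of crossings of a generic arc to a fixed basepoint, and you verify well-definedness via the winding-number identity $\sum_{e=uw}\bigl(\mathrm{wind}_\gamma(u)+\mathrm{wind}_\gamma(w)\bigr)=\sum_{v}\deg(v)\,\mathrm{wind}_\gamma(v)\equiv 0 \pmod 2$, which is exactly where evenness of all degrees enters. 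What your approach buys is self-containedness and indifference to connectivity (the paper's ``without loss of generality $G_\text{isc}(\Gamma)$ is connected'' step disappears); what it costs is the general-position bookkeeping you already flag (perturbing edges to polygonal arcs, genericity of the arcs, independence of the chosen point within a face), which sits at roughly the same level of implicit topological hygiene the paper assumes when it treats $G_\text{isc}(\Gamma)$ as an embedded plane graph and invokes planar duality.
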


For every {cell }coloring of a drawing of a non-Eulerian graph, at least $3$ colors are required.

%However, we show that this is the worst case: every graph without degree one vertices (in particular, any bridgeless graph) has numerous drawings that are $3$-colorable.
However, except for a trivial case, we show that 3 colors are the worst case: Every graph without degree 1 vertices (in particular, any bridgeless graph) has numerous drawings that are $3$-colorable. Note that, if a graph has a vertex of degree 1, then in any drawing the cell incident to that vertex touches itself. Thus, there exists no cell $k$-coloring of any drawing of that graph and any $k\in\NN$.

\begin{restatable}{proposition}{restateleaf}\label{leaf}
A graph $G$ has a drawing with a cell $3$-coloring if and only if it has no vertex of degree 1.\label{prop:leaf}
\end{restatable}

Propositions~\ref{eulerian}, and~\ref{leaf} are proved in Section~\ref{sec:2colors}.

Propositions~\ref{obvious},~\ref{eulerian}, and~\ref{leaf} motivate the problem of understanding the structure of graphs {\emph{all}} whose drawings are $3$-colorable. This leads to the following notion: If $G$ is an abstract graph, we say that $G$ is \emph{{universally cell} $3$-colorable} if every drawing of $G$ in the plane admits a {cell }$3$-coloring. 

In Section~\ref{sec:3colors} we derive several sufficient conditions for a graph to be {universally cell} $3$-colorable, and thereby draw a link between {universally cell} $3$-colorable graphs and so-called \emph{$3$-flowable} graphs, which are intensively studied in the theory of \emph{nowhere-zero flows} on graphs. For $k \in \mathbb{N}$, a \emph{nowhere-zero $k$-flow} on a graph $G$ consists of a pair $(D,f)$, where $D = (V(D), A(D))$ is an orientation of the edges of $G$, and where $f\colon A(D) \rightarrow \mathbb{Z}_k \setminus \{0\}$ is a group-valued flow on the digraph $D$, {that is}, a weighting of the arcs with non-zero group elements from $\mathbb{Z}_k$ satisfying Kirchhoff's law of flow conservation:

\begin{align*}
\forall v \in V(D)\colon \sum_{e=(w,v) \in A(D)}{f(e)}=\sum_{e=(v,w) \in A(D)}{f(e)}
\end{align*}

If a graph $G$ admits a nowhere-zero $k$-flow, we also say that $G$ is \emph{$k$-flowable}. The interest in nowhere-zero-flows stems from the following intimate connection to colorings of planar graphs. For a comprehensive introduction to the topic of nowhere-zero flows, we refer to the textbook~\cite{flows} by Zhang. Particularly relevant to the topics addressed here are the sections `Face Colorings' and `Nowhere-Zero $3$-Flows'.

\begin{theorem}[Folklore, see also \cite{flows}, Theorem 1.4.5]\label{thm:flowcoloringduality}
{If} $G$ {is} a planar graph and $\Gamma$ {is} a crossing-free embedding of $G$ in the plane, then for any $k \in \mathbb{N}$, $G$ admits a nowhere-zero $k$-flow if and only if $\Gamma$ has a cell $k$-coloring.
\end{theorem}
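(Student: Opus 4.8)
The plan is to prove the classical flow–coloring duality for planar graphs directly, by setting up an explicit correspondence between face-$k$-colorings of $\Gamma$ and nowhere-zero $\ZZ_k$-flows of $G$. First I would fix a crossing-free embedding $\Gamma$ of $G$ in the plane and recall the standard planar dual $G^*$: its vertices are the faces of $\Gamma$, and each edge $e$ of $G$ gives rise to a dual edge $e^*$ joining the two faces incident to $e$. The key structural fact I would invoke is that cuts in $G$ correspond to cycles in $G^*$ and vice versa; in particular, flows on $G$ (elements of the cycle space of $G$ over $\ZZ_k$, in the sense of Kirchhoff's law) correspond to tensions/potentials on $G^*$, i.e. to functions that are gradients of vertex-labelings of $G^*$.

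The forward direction: given a face-$k$-coloring $c\colon \mathcal{F}(\Gamma)\to \ZZ_k$, I would orient each edge $e$ of $G$ arbitrarily and define $f(e) = c(f_{\text{left}}) - c(f_{\text{right}})$, where $f_{\text{left}}, f_{\text{right}}$ are the faces to the left and right of $e$ with respect to its orientation. Since adjacent faces get different colors, $f(e)\neq 0$ for every edge, so $f$ is nowhere-zero. To check Kirchhoff's law at a vertex $v$, I would walk around $v$ and observe that the contributions $f(e)$ telescope around the rotation: each face incident to $v$ is counted once with a plus and once with a minus sign as we pass its two bounding edges at $v$, so the signed sum over edges at $v$ is zero. (Here I would use that $G$ is connected, or handle components separately, and that the embedding is cellular on the sphere — alternatively phrase it on $S^2$ by adding the outer face.) This produces a nowhere-zero $k$-flow $(D,f)$.

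The reverse direction: given a nowhere-zero $k$-flow $(D,f)$ on $G$, I want to recover a face-$k$-coloring. The idea is that $f$, viewed as a $\ZZ_k$-valued $1$-cochain on the dual $G^*$, is a coboundary: Kirchhoff's law at the vertices of $G$ says exactly that $f$ integrates to zero around every face of $G$, i.e. around every vertex of $G^*$, and since the faces of $G$ generate the cycle space of $G^*$ (for a plane graph), $f$ vanishes on the whole cycle space of $G^*$. Hence there is a potential $c\colon V(G^*) = \mathcal{F}(\Gamma) \to \ZZ_k$, unique up to an additive constant on each connected component of $G^*$ (and $G^*$ is connected when $G$ is), with $f(e) = c(f_{\text{left}}) - c(f_{\text{right}})$ for all $e$. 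Concretely I would construct $c$ by fixing a spanning tree of $G^*$, assigning an arbitrary value to the root face, and propagating along tree edges; well-definedness is precisely the statement that $f$ has zero sum around every cycle of $G^*$, which follows from flow conservation. Because $f$ is nowhere-zero, $c(f_1)\neq c(f_2)$ for any two adjacent faces, so $c$ is a proper face-$k$-coloring.

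The main obstacle — and the point that deserves the most care — is the topological/combinatorial bookkeeping that legitimizes the phrases ``the faces of a plane graph generate the cycle space of the dual'' and ``the signed sum of $f$ around a vertex telescopes.'' Concretely, one must (i) work on the sphere $S^2$ rather than the plane, so that the outer face is an honest face and the duality $G \leftrightarrow G^*$ is symmetric, (ii) deal with the possibility that $G$ is disconnected or has bridges or multiple edges (bridges give dual loops, so the corresponding face is adjacent to itself — consistent with the fact that a bridge carries flow value $0$ in any flow, i.e. bridgeless-ness is implicitly needed, or one restricts attention to $2$-edge-connected $G$), and (iii) make the orientation conventions for ``left'' and ``right'' globally consistent using the planar rotation system. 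Once these conventions are pinned down, both directions are short; I would present the argument in the language of the cycle space / cut space of $G$ over $\ZZ_k$ and the cut–cycle duality between $G$ and $G^*$, which packages all three technical points cleanly and makes the proof essentially a one-line observation in each direction. I also expect the paper may simply cite~\cite{flows} for the details, since this is the folklore result stated as Theorem~\ref{thm:flowcoloringduality}.
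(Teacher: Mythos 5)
The paper does not actually prove this statement: it is quoted as folklore with a pointer to Zhang's book (Theorem~1.4.5 of \cite{flows}) and used as a black box throughout. Your proposal supplies the standard textbook proof, namely that face-$k$-colorings of $\Gamma$ are precisely the $\mathbb{Z}_k$-potentials on the planar dual $G^*$, that their coboundaries $c(f_{\text{left}})-c(f_{\text{right}})$ are the $\mathbb{Z}_k$-tensions on $G^*$, and that under planar cut--cycle duality these are exactly the $\mathbb{Z}_k$-flows on $G$, with ``nowhere-zero'' on one side matching ``proper'' on the other. The outline is correct, including the telescoping verification of Kirchhoff's law in the forward direction, the spanning-tree construction of the potential in the reverse direction, and the remark that a bridge kills both sides simultaneously, so the equivalence holds vacuously there. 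One piece of bookkeeping is swapped in your write-up: Kirchhoff's law at a vertex $v$ of $G$ says that $f$ sums to zero around the boundary of the \emph{face of $G^*$} dual to $v$; accordingly it is the facial cycles of $G^*$ (not ``the faces of $G$'') that generate the cycle space of $G^*$ and give the existence of the potential. With that correction, the remaining caveats you flag --- passing to $S^2$, handling a disconnected $G$ (e.g.\ by summing the component potentials over the faces of the whole drawing), and fixing a globally consistent left/right convention from the rotation system --- are exactly what a complete write-up must pin down, and they are routine. So your route is sound; it simply carries out the work the paper deliberately outsources to the literature.
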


To provide some intuition for the correspondence between a cell - coloring and a
 nowhere\-/zero flow, let us mention that at least the `if' direction of the above equivalence can rather easily be observed as follows: Suppose that a crossing-free planar drawing $\Gamma$ of $G$ equipped with a cell $k$-coloring is given. We may identify the elements of the color set used for this coloring with the elements of the cyclic group $\mathbb{Z}_k$. Next, let $D$ be an (arbitrarily chosen) orientation of $G$, and define a $\mathbb{Z}_k$-valued flow $f$ on $D$ as follows:
 For every arc $(u,v) \in A(D)$, consider the two cells~$f_1$ and $f_2$ of the drawing $\Gamma$ which are neighboring the image of the oriented arc $(u,v)$ in $\Gamma$ to its `right' and `left', respectively (compare the example in Figure~\ref{fig:dualflow}), and define the flow value of $f$ on $(u,v)$ as $f(u,v):=c(f_1)-c(f_2)$, where $c(f_i) \in \mathbb{Z}_k$ for $i=1,2$ denote the colors assigned to the {cell}s. Since $f_1$ and $f_2$ are neighboring cells in $\Gamma$, the condition on a proper cell-coloring yields that with this definition of $f$, we have $f(e) \neq 0$ for all arcs $e \in E(D)$. In addition, it is easy to see from the definition of $f$ that the sum of out- and in-flow around any vertex in $D$ equals $0$, hence, $(D,f)$ indeed forms a nowhere zero $k$-flow. 

\begin{figure}[ht]
\centering  
\includegraphics[scale=0.6]{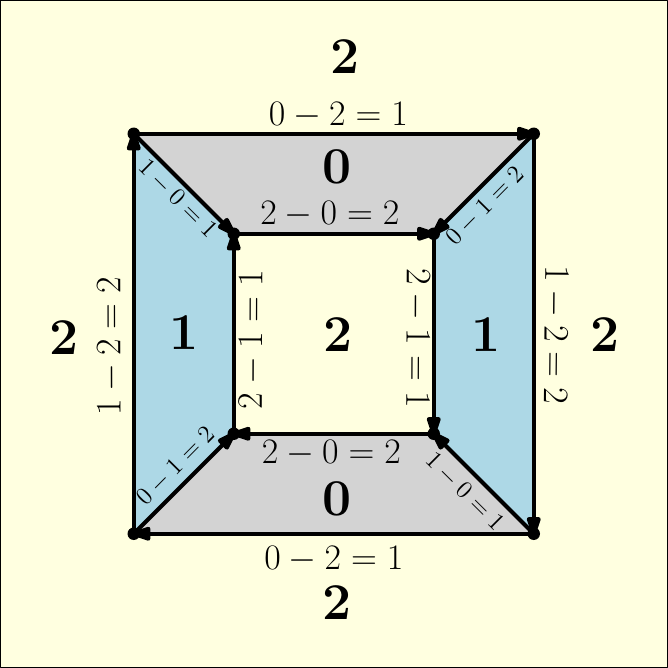}
\caption{A nowhere zero-$3$-flow of the cube graph obtained from a cell $3$-coloring of a planar drawing.}\label{fig:dualflow}
\end{figure}

Similar to the situation for {cell }colorings, only bridgeless graphs can have nowhere-zero flows, as the flow value of a bridging edge must be $0$. Conversely, a {well-known} result by Seymour~\cite{6flows} states that every bridgeless graph admits a nowhere-zero $6$-flow.
The following result relates the existence of nowhere-zero $3$-flows in graphs with the existence of {cell }$3$-colorings for all their drawings.

\begin{restatable}{theorem}{restatethreeflowsuff}\label{thm:3flowsuff}
	{Every graph admitting a nowhere-zero $3$-flow is universally cell $3$-colorable.}
\end{restatable}

{From Theorem~\ref{thm:flowcoloringduality} and Theorem~\ref{thm:3flowsuff} we can see that a planar graph is universally cell $3$-colorable if and only if it admits a nowhere zero $3$-flow. This has the following interesting consequence regarding the computational complexity of recognising universally cell $3$-colorable graphs.

\begin{corollary}
Deciding whether a given planar graph is universally cell $3$-colorable is \NP -complete.\label{planarhard}
\end{corollary}

\begin{proof}
Testing whether a planar graph is {universally cell} $3$-colorable by Theorem~\ref{K33free} is equivalent to testing whether a given planar graph is $3$-flowable. This problem is clearly contained in the class \NP\ (we can verify a modulo-$3$-orientation in polynomial time). By Theorem~\ref{thm:flowcoloringduality} and planar duality, we can furthermore reduce the problem of deciding whether a given planar graph is properly $3$-vertex-colorable to this problem. Since this problem is \NP -complete (see \cite{garey}), we deduce the claim.
\end{proof}

}

Based on Gr\"{o}tzsch's theorem, we obtain another interesting positive result.

\begin{restatable}{corollary}{restatefourconn}\label{4conn}
Every $4$-edge-connected graph is {universally cell} $3$-colorable.
\end{restatable}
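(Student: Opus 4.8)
The plan is to mirror the proof of Proposition~\ref{obvious}, replacing the Four-Color-Theorem by Gr\"{o}tzsch's theorem and additionally controlling small edge cuts. Fix an arbitrary drawing $\Gamma$ of a $4$-edge-connected graph $G$, let $H := G_\text{isc}(\Gamma)$ be the associated plane graph, and recall that $G^\top(\Gamma)$ is its planar dual. By the definition of a face-coloring, $\Gamma$ admits a face-$3$-coloring if and only if $G^\top(\Gamma)$ has a proper $3$-vertex-coloring, and since $G^\top(\Gamma)$ is planar, Gr\"{o}tzsch's theorem (every triangle-free planar graph is $3$-colorable) reduces the task to showing that $G^\top(\Gamma)$ is loopless and triangle-free. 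By planar duality the cycles of $G^\top(\Gamma)$ are exactly the inclusion-minimal edge cuts (bonds) of $H$, with loops and triangles corresponding respectively to bonds of size $1$ and $3$. So everything comes down to the following claim, which I would isolate as a lemma.

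\medskip
\noindent\emph{Claim.} If $G$ is $4$-edge-connected, then $H = G_\text{isc}(\Gamma)$ has no bond of size $1$ or $3$.

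\medskip
To prove it, I would take a bond $C$ of $H$, let $A \sqcup B = V(H)$ be the induced partition, and set $V_A := A \cap V(G)$ and $V_B := B \cap V(G)$, so that $V(G) = V_A \sqcup V_B$. The key observation is that each edge $e=uv$ of $G$ is subdivided in $H$ into a trail from $u$ to $v$, which meets $C$ in an odd number of edges when $u$ and $v$ lie on opposite sides of the partition and in an even number otherwise; moreover each edge of $H$ lies on exactly one such trail, so, writing $c_e$ for the number of edges of $C$ on the trail of $e$ and $m$ for the number of edges of $G$ with exactly one endpoint in $V_A$, we get
\[
|C| = \sum_{e \in E(G)} c_e \;\ge\; m .
\]
If $V_A$ and $V_B$ are both nonempty, then $m \ge 4$ by $4$-edge-connectivity, hence $|C| \ge 4$. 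Otherwise, say $V_A = \emptyset$; then $A$ is a nonempty set of crossing vertices of $\Gamma$, each of degree $4$ in $H$, and a handshake count inside $A$ gives $|C| = \sum_{v \in A}\deg_H(v) - 2e_H(A) = 4|A| - 2e_H(A)$, which is even. In either case $|C| \notin \{1,3\}$, proving the claim.

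\medskip
I expect the claim — specifically the parity argument at crossing vertices combined with $4$-edge-connectivity — to be the only substantial step; the rest is unpacking the definition of a face-coloring and citing Gr\"{o}tzsch. Two details deserve care in the write-up. First, degenerate drawings (an edge crossing itself, several edges through a common point, loops of $G$) still yield crossing vertices of \emph{even} degree, so the handshake step and hence the whole argument go through unchanged. Second, $H$ and therefore $G^\top(\Gamma)$ may have parallel edges, which do not obstruct proper $3$-colorability, so Gr\"{o}tzsch's theorem can safely be applied to the underlying simple graph. Finally, it is worth stressing that this argument does \emph{not} produce a nowhere-zero $3$-flow of $G$ itself (which for $4$-edge-connected $G$ would be Tutte's $3$-flow-conjecture), but, equivalently to what we prove, only a nowhere-zero $3$-flow of the auxiliary planar graph $H$; the two differ precisely because the subdivision paths inside $H$ need not be consistently oriented, so there is no tension with the open status of that conjecture.
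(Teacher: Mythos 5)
Your proof is correct, but it takes a genuinely different route from the paper. The paper first invokes Proposition~\ref{prop:topssuffice} to restrict attention to \emph{good} drawings, then proves that the planarization $G_\text{isc}(\Gamma)$ of a good drawing of a $4$-edge-connected graph is itself $4$-edge-connected (using that each crossing involves exactly two edges, each contributing two cut edges on its subdivision path), and finally concludes via the flow--coloring duality together with the dual form of Gr\"{o}tzsch's theorem (Theorem~\ref{dualgroetzsch}). You instead work with an arbitrary drawing directly and prove the weaker but sufficient statement that $G_\text{isc}(\Gamma)$ has no bond of size $1$ or $3$: cuts separating two normal vertices have size at least $4$ by $4$-edge-connectivity of $G$ (via the odd-intersection parity of subdivision trails), while cuts whose one side consists only of intersection vertices have even size by the handshake count at even-degree crossing vertices; then Gr\"{o}tzsch applies to the (simple underlying graph of the) dual. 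Your parity trick is exactly what makes the argument survive non-good drawings, where the paper's stronger claim genuinely fails (a self-crossing edge already produces a $2$-edge-cut in the planarization, which your argument tolerates since it only yields parallel dual edges); so you save the lengthy appendix reduction and the flow detour for this theorem. What the paper's route buys in exchange is reuse of machinery that is needed elsewhere anyway (Proposition~\ref{prop:topssuffice} is also used for $K_{3,n}^+$, and the flow formulation matches the rest of the paper), plus the cleaner intermediate structural fact about good drawings. Two small points to make explicit in a write-up: the cycle--bond duality you invoke requires $G_\text{isc}(\Gamma)$ to be connected (immediate since $G$ is $4$-edge-connected, hence connected), and the subdivision walks are indeed trails, so counting cut edges without multiplicity is legitimate; both are easy but worth a sentence. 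Your closing remark that this yields a nowhere-zero $3$-flow only of the planarization, not of $G$, is also accurate and consistent with the discussion of Conjecture~\ref{tutte}.
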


The proofs of Theorem~\ref{thm:3flowsuff} and {Corollary}~\ref{4conn} are presented in Section~\ref{sec:3colors}.
Looking at Theorem~\ref{thm:3flowsuff}, it is natural to ask whether there are {universally cell} $3$-colorable graphs that are not $3$-flowable. In Section~\ref{sec:infinite}, we answer this question in the positive by providing an infinite family of graphs with this property. For every $n \in \mathbb{N}$, {$K_{3,n-3}^+$} denotes the {$n$-vertex-}graph obtained from the complete bipartite graph {$K_{3,n-3}$} by joining two vertices in the partite set of size $3$ by an edge.

\begin{restatable}{theorem}{restateKthreens}\label{K3ns}
For every $n \ge 7$, the graph {$K_{3,n-3}^+$} is {universally cell} $3$-colorable but does not admit a nowhere-zero $3$-flow.
\end{restatable}

However, examples of graphs as given by Theorem~\ref{K3ns} are rarely spread. In fact, Sudakov~\cite{sudakov3flows} proved that random graphs expected to have minimum degree at least $2$ are expected to have a nowhere-zero $3$-flow, thereby establishing in a strong sense that almost all graphs admit a nowhere-zero $3$-flow. In Section~\ref{sec:mincex} we prove the following two results, which show that an equivalence between {universal cell} $3$-colorability and the existence of nowhere-zero $3$-flows {(as we have it for planar graphs by Theorems~\ref{thm:flowcoloringduality} and~\ref{thm:3flowsuff})} holds at least for sparse graph classes beyond planar graphs. The proofs rely on Propositon~\ref{subcontclosed}, which shows that {universal cell} $3$-colorability is hereditary with respect to subcontractions{.} (For a definition, see the paragraph at the end of this section.)\oldV{.}

\begin{restatable}{theorem}{restatecubic} \label{scubic}
A graph with maximum degree at most $3$ is {universally cell} $3$-colorable if and only if it is $3$-flowable.
\end{restatable}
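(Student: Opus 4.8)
The plan is to prove both directions. One direction is immediate: if $G$ is $3$-flowable, then by Theorem~\ref{3flowsuff} it is facially $3$-colorable, with no restriction on the degree. So the content is the converse: assuming $G$ has maximum degree at most $3$ and is \emph{not} $3$-flowable, I want to exhibit a drawing of $G$ with no face-$3$-coloring.

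For the converse I would first reduce to a convenient special case. Using Proposition~\ref{ident} (facial $3$-colorability is hereditary under subcontractions), it suffices to find, inside a non-$3$-flowable subcubic graph $G$, a ``bad'' subcontraction that we can show is not facially $3$-colorable; equivalently, if every subcubic non-$3$-flowable graph contained a subcubic non-facially-$3$-colorable minor of bounded size, we would be done. The natural candidate is $K_4$: a classical theorem of Tutte characterizes the cubic $3$-flowable graphs as exactly the bipartite cubic graphs, and more to the point, a bridgeless cubic graph is $3$-flowable iff it is $3$-edge-colorable; the smallest obstruction in this world is $K_4$, which is not $3$-flowable. I would therefore aim to show that (i) $K_4$ admits a drawing with no face-$3$-coloring, and (ii) every bridgeless subcubic graph that is not $3$-flowable has $K_4$ as a subcontraction (for bridges, note first that a graph with a bridge is neither $3$-flowable nor — if the bridge creates a self-touching face in a suitable drawing — facially $3$-colorable, so bridges can be handled separately or we restrict to the bridgeless case). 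Step (ii) is essentially the statement that the only ``minor-minimal'' non-$3$-flowable subcubic graph is $K_4$, which follows from the fact that being $3$-flowable in the subcubic/cubic regime is equivalent to $3$-edge-colorability and the only minor-minimal obstruction to $3$-edge-coloring a subcubic bridgeless graph, after suppressing degree-$2$ vertices, is $K_4$ — combined with the observation that suppressing a degree-$2$ vertex is a subcontraction.

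The heart of the argument is then step (i): constructing a drawing $\Gamma(K_4)$ whose dual has chromatic number $\ge 4$. Here I would try to draw $K_4$ with exactly one or two well-chosen crossings so that the planarized graph $G_\text{isc}(\Gamma)$ has an odd structure forcing the dual away from $3$-colorability — concretely, so that $G^\top(\Gamma)$ contains an odd wheel or, more robustly, so that $G_\text{isc}(\Gamma)$ is a plane graph that is \emph{not} $3$-flowable (by Theorem~\ref{thm:flowcoloringduality}, a plane graph's faces are $3$-colorable iff the graph is $3$-flowable, and introducing a crossing vertex of degree $4$ can destroy $3$-flowability of the original $K_4$-drawing in a controlled way). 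In fact the cleanest route is: choose the drawing so that $G_\text{isc}(\Gamma)$ contains a vertex of degree $4$ whose link, together with a triangle, yields a $K_4$ or $W_5$ minor in the dual; then invoke the $4$-color bound from Proposition~\ref{obvious} together with a parity obstruction to rule out $3$ colors.

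The main obstacle I anticipate is step (i): it is not automatic that $K_4$, being planar, has \emph{any} non-$3$-colorable drawing — one must genuinely use a crossing to raise the dual's chromatic number, and verifying that the particular small drawing chosen has no face-$3$-coloring requires either an explicit finite case check or a clean invariant (such as: the planarized graph fails to be $3$-flowable, checked via the flow/coloring duality of Theorem~\ref{thm:flowcoloringduality}). Packaging this so it is both correct and short is the delicate part; the minor-theoretic reduction in steps (ii) and the bridge bookkeeping, by contrast, I expect to be routine given Proposition~\ref{ident} and Tutte's classification of $3$-flowable cubic graphs.
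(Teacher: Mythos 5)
Your overall plan (forward direction via Theorem~\ref{3flowsuff}, converse by exhibiting a non-facially-$3$-colorable subcontraction inside any subcubic non-$3$-flowable graph and invoking Proposition~\ref{ident}) is the same skeleton as the paper's, but the concrete instantiation has a genuine gap. The ``fact'' you lean on is false: for cubic graphs, $3$-flowability is equivalent to \emph{bipartiteness}, not to $3$-edge-colorability ($3$-edge-colorability of a cubic graph corresponds to a nowhere-zero $4$-flow; indeed $K_4$ is $3$-edge-colorable yet, as you yourself note, not $3$-flowable). Consequently your step (ii) fails: $K_4$ is not a universal obstruction under subcontraction. The Petersen graph is cubic, bridgeless and not $3$-flowable (it is non-bipartite), but it has no subcontraction isomorphic to $K_4$: such a subcontraction is a partition of its vertex set into four parts with exactly one edge between each pair of parts, and a short count using that the graph is cubic with girth $5$ (parts must have odd size $a$ with $(3a-3)/2$ internal edges, forcing a triangle or a $5$-set spanning $6$ edges) rules this out. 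Nor can you retreat to minors: facial $3$-colorability is closed under subcontractions (Proposition~\ref{ident}) but \emph{not} under minors, since edge deletions can create degree-$1$ vertices or bridges, which destroy facial $3$-colorability (Proposition~\ref{leaf}). The correct obstruction family, and the one the paper uses, is the family of \emph{all odd wheels}; e.g.\ the Petersen graph subcontracts onto $W_5$ by identifying the five inner vertices. The real content you have not supplied is the structural statement that every bridgeless subcubic graph with no odd-wheel subcontraction is $3$-flowable; the paper proves this by a minimum-counterexample argument that eliminates parallel edges and degree-$2$ vertices by identifications, reduces to simple cubic graphs, observes that a non-bipartite cubic graph contracts onto an odd wheel along an induced odd cycle, and finally orients all edges across the bipartition of a bipartite cubic graph.

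By contrast, the step you single out as the heart of the difficulty, your step (i), is actually immediate and needs no crossings at all: every odd wheel (in particular $K_4=W_3$) is a planar self-dual graph of chromatic number $4$, so already its crossing-free embedding admits no face-$3$-coloring (equivalently, by Theorem~\ref{thm:flowcoloringduality}, a planar graph that is not $3$-flowable fails on its plane drawing). So the proposal misallocates the work: the drawing construction is trivial, while the reduction to a single bounded obstruction ($K_4$) is both unproved and false as stated, and fixing it essentially forces you back to the paper's odd-wheel argument.
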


\begin{restatable}{theorem}{restatekthreethreefree}\label{K33free}
A $K_{3,3}$-minor-free graph is {universally cell} $3$-colorable if and only if it is $3$\=/flowable.
\end{restatable}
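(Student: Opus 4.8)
The implication from $3$-flowability to facial $3$-colorability is exactly Theorem~\ref{3flowsuff} and needs no hypothesis on $G$, so the content is the converse: assuming $G$ is $K_{3,3}$-minor-free and facially $3$-colorable, I want to produce a nowhere-zero $3$-flow. The plan is an induction on $|V(G)|+|E(G)|$ organized by Wagner's structure theorem for $K_{3,3}$-minor-free graphs, according to which every $3$-connected such graph is either planar or (up to parallel edges) $K_5$. Beforehand I would clear away two easy cases: a disconnected graph is treated component by component, and a graph with a bridge is neither $3$-flowable (the bridge must carry flow value $0$) nor facially $3$-colorable (it has a drawing with a self-touching face, obtained by drawing the two sides of the bridge in separate parts of the plane), so for it the equivalence holds trivially. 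Hence we may assume $G$ connected and bridgeless.

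Suppose first that $G$ is $3$-connected. If $G$ is planar, fix a crossing-free embedding $\Gamma$ of $G$ in the plane; this is in particular a drawing of $G$, so facial $3$-colorability supplies a face-$3$-coloring of $\Gamma$, which Theorem~\ref{thm:flowcoloringduality} converts into a nowhere-zero $3$-flow of $G$. Otherwise $G$ is, up to parallel edges, $K_5$; since $K_5$ is Eulerian it carries the all-ones nowhere-zero $3$-flow on an Eulerian orientation (every vertex having in-degree equal to out-degree, namely $2$), and adding parallel edges does not obstruct the existence of such a flow. Either way $G$ is $3$-flowable.

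Suppose now that $G$ is not $3$-connected. Then $G$ has a vertex cut of size at most $2$, and I would write $G=G_1\oplus G_2$ as the corresponding $1$- or $2$-sum, introducing the virtual edge $uv$ in the case of a $2$-cut $\{u,v\}$. Each summand $G_i$ is $K_{3,3}$-minor-free and strictly smaller in $|V|+|E|$, and each is a subcontraction of $G$: for a $2$-cut one realizes the virtual edge $uv$ by contracting a $u$-$v$ path through the opposite side, which exists because $G$ is $2$-connected. Hence Proposition~\ref{ident} shows that each $G_i$ is facially $3$-colorable, so by the induction hypothesis each $G_i$ has a nowhere-zero $3$-flow. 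These must be merged into one on $G$: for a $1$-sum one simply takes the union of the two flows; for a $2$-sum along $uv$ one orients $uv$ consistently in both flows (reversing an arc preserves the nowhere-zero $\mathbb{Z}_3$-flow property) and rescales one flow by a nonzero element of the field $\mathbb{Z}_3$ so that the two values on $uv$ become compatible, whereupon the flows glue to a nowhere-zero $3$-flow of $G$, as one checks by verifying Kirchhoff's law at $u$ and $v$, the only affected vertices. This completes the induction.

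The real work is carried by Wagner's theorem and by Proposition~\ref{ident}, and the step I expect to demand the most care is the reduction along cuts of size at most $2$: on the one hand, arguing that the summands $G_i$ are genuine subcontractions of $G$ so that facial $3$-colorability descends to them (handling the bookkeeping of the virtual edge, including the case where the relevant edge is already present in $G$), and on the other hand, the recombination of nowhere-zero $3$-flows across a $2$-sum, which makes essential use of $\mathbb{Z}_3$ being a field so that a nonzero value on the virtual edge may be normalized.
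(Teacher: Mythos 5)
Your forward direction and your treatment of the $3$-connected case (planar via duality, $K_5$ possibly with parallel edges directly) are fine, but the reduction along $2$-cuts — which you yourself identify as the delicate step — has a genuine gap, in two places. First, the claim that a summand $G_1$ together with a \emph{single} virtual edge $uv$ is a subcontraction of $G$ is false in general: contracting a $u$--$v$ path through the other side does not remove the remaining vertices of that side, and absorbing the entire other side by identifications produces $G_1$ together with as many parallel $uv$-edges as there are $G$-edges between the two final classes. You cannot prune these down to one edge, because Proposition~\ref{ident} only allows identifications, and facial $3$-colorability is \emph{not} closed under edge deletion (for instance, the odd wheel $W_5$, which is not facially $3$-colorable, is a subgraph of $K_6$, which is facially $3$-colorable by Theorem~\ref{4conn}). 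Concretely, the $4$-cycle plus one chord $uv$ is not a subcontraction of $K_{2,4}$, even though $\{u,v\}$ is a $2$-cut of $K_{2,4}$ with that $4$-cycle as one side; every identification that keeps the $4$-cycle intact forces at least two parallel virtual edges.

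Second, and more seriously, once the virtual multiplicity is not $1$, your recombination step is not a valid implication: $3$-flowability of both augmented sides does not imply $3$-flowability of $G$, because you have no control over which boundary excesses modulo $3$ each side can realize at $u$ and $v$. Take $G$ to be $K_4-uv$ and $K_4$ (the latter containing the edge $uv$) glued at $u,v$: the side $K_4-uv$ realizes only excess $0$ and the side $K_4$ realizes only nonzero excess (in either case, realizing the forbidden value would make $K_4$ $3$-flowable), so $G$ is not $3$-flowable; yet each side together with the parallel virtual edges obtained by contracting the opposite side (three parallel $uv$-edges in each case) \emph{is} $3$-flowable, since three nonzero values can sum to $0$ or to any nonzero residue. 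This $G$ is planar and not facially $3$-colorable, so it does not contradict the theorem, but it refutes exactly the gluing step your induction relies on; to rescue it you would need to know which boundary values each side realizes, and that is the crux of the matter. The paper's proof is built precisely to overcome this: it takes a minimal counterexample, shows it is edge-$3$-critical and $2$-connected (never needing $3$-connectivity), applies the $K_5$-structure decomposition of Lemma~\ref{K5structure} to the whole nonplanar graph, and uses edge-criticality to prove that each piece $G_{i,j}$ is \emph{either} good (mod-$3$-orientable) \emph{or} bad (near-mod-$3$-orientable missing its two branch vertices) but never both — this dichotomy is the boundary control your piecewise induction lacks — and then derives a contradiction because the resulting auxiliary graph on the five branch vertices would have to be vertex-$3$-critical, which no $5$-vertex graph is.
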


We have not been able to find graphs which are {universally cell} $3$-colorable but not $3$-flowable except for graphs arising by simple operations from the examples given by Theorem~\ref{K3ns}. To be more precise, we believe that excluding the graphs $K_{3,n-3}^+$ for $n \ge 7$ as subcontractions could already be sufficient to yield an equivalence between {universal cell} $3$-colorability and $3$-flowability.

\begin{conjecture}\label{main}
If $G$ is a {universally cell} $3$-colorable graph which does not have a subcontraction isomorphic to $K_{3,n-3}^+$ for some {$n \ge 7$}, then $G$ is $3$-flowable.
\end{conjecture}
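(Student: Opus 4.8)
The plan is to argue by contradiction through a minimal counterexample, strip away small cuts, and reach a highly edge-connected configuration in which one must produce a nowhere-zero $3$-flow essentially by hand. So suppose the conjecture fails and let $G$ minimise $|V(G)|+|E(G)|$ among counterexamples: $G$ is facially $3$-colorable, has no subcontraction isomorphic to any $K_{3,n}^+$ with $n\ge 4$, and is not $3$-flowable. Three easy observations come first. A graph with a bridge has a drawing with a self-touching face, which admits no face-coloring at all, so $G$ is bridgeless (in particular $\delta(G)\ge 2$). Suppressing a degree-$2$ vertex is a subcontraction and preserves $3$-flowability in both directions, so by Proposition~\ref{ident} together with minimality $G$ has minimum degree at least $3$. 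Finally, by Proposition~\ref{ident} every proper subcontraction $H$ of $G$ is again facially $3$-colorable; it is also $K_{3,n}^+$-subcontraction-free (subcontraction is transitive) and strictly smaller, so by minimality \emph{every proper subcontraction of $G$ is $3$-flowable}.

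Next I would raise the edge-connectivity. For an edge-cut $(A,B)$ of size at most $3$ with $|A|,|B|\ge 2$, contracting the connected side $B$ (resp.\ $A$) to a single vertex yields a proper subcontraction of $G$, hence a $3$-flowable graph $G_A$ (resp.\ $G_B$). In any $\mathbb{Z}_3$-flow the values on a $2$- or $3$-edge-cut are forced: after orienting all cut edges consistently, the three values on a $3$-cut are all equal and the two values on a $2$-cut are distinct. Hence, after possibly negating the flow on one side, the partial flows on $G_A$ and $G_B$ agree on the cut and glue to a nowhere-zero $3$-flow of $G$ --- a contradiction. Therefore every edge-cut of $G$ of size at most $3$ isolates a single vertex, necessarily of degree exactly $3$; i.e.\ $G$ is $4$-edge-connected away from its degree-$3$ vertices. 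Those degree-$3$ vertices are exactly where the hypothesis must do work: the reason $K_{3,n}^+$ fails to be $3$-flowable is a congruence clash at its three hub vertices that is forced by the many degree-$3$ vertices attached to them together with the extra hub-hub edge. One would like a local exchange --- a controlled vertex split or $Y$-$\Delta$ move, plus an appeal to $3$-flowability of a slightly smaller graph --- that either removes a degree-$3$ vertex of $G$ or exhibits a $K_{3,n}^+$ subcontraction; alternatively, in the contrapositive spirit used for subcubic graphs, one would build an explicit drawing of such a $G$ with no face-$3$-coloring, contradicting facial $3$-colorability.

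The main obstacle is the genuinely $4$-edge-connected case, and I do not expect it to be surmountable in full generality. There Theorem~\ref{4conn} already gives facial $3$-colorability, the absence of a $K_{3,n}^+$ subcontraction is automatic, and all that remains is to show that a $4$-edge-connected graph (equivalently, after Kochol's reduction, a $5$-edge-connected graph) is $3$-flowable --- that is precisely Tutte's $3$-flow conjecture. This is the ``surprising relation'' alluded to in the introduction, and it shows the present conjecture is at least as hard as a famous open problem. One can still settle restricted classes that never reach this bottleneck. For maximum degree at most $3$ (Theorem~\ref{scubic}) the reductions collapse $G$ to a cubic graph, where $3$-flowability is equivalent to bipartiteness, and a non-bipartite cubic graph can be given an explicit drawing whose odd structure forbids any face-$3$-coloring. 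For $K_{3,3}$-minor-free graphs (Theorem~\ref{K33free}) one uses Wagner's decomposition into clique-sums over at most two vertices of planar graphs and copies of $K_5$: $K_5$ is Eulerian, hence $3$-flowable; a facially $3$-colorable planar summand has a face-$3$-coloring of its planar embedding and so is $3$-flowable by Theorem~\ref{thm:flowcoloringduality}; and the $3$-flows recombine across the small clique-sums, with Proposition~\ref{ident} ensuring the summands inherit facial $3$-colorability. Any further progress --- to bounded-genus graphs, say, or graphs with no $K_5$-subdivision --- would similarly require a structure theorem that reduces to small, well-understood pieces, so as to avoid feeding the $4$-edge-connected case back into Tutte's conjecture.
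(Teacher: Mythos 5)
The statement you were asked to prove is Conjecture~\ref{main}, which the paper itself leaves open, so there is no proof in the paper to compare against; your proposal correctly stops short of claiming one and explains why, which is the right assessment. Your reductions are sound and essentially reproduce the paper's own partial progress: minimality together with Proposition~\ref{ident} gives that every proper subcontraction of a minimal counterexample is $3$-flowable (vertex-$3$-criticality), and your gluing argument across $2$- and $3$-edge-cuts is the standard argument (Lemma~\ref{smalledgecuts}) the paper uses in Theorem~\ref{thm:mincex} to show that a minimal counterexample is $3$-edge-connected and that every $3$-edge-cut isolates a cubic vertex. Your identification of the bottleneck is also exactly the paper's: a $4$-edge-connected graph has no $K_{3,n}^+$ subcontraction and is facially $3$-colorable by Theorem~\ref{4conn}, so any proof of Conjecture~\ref{main} would settle Tutte's Conjecture~\ref{tutte}; the conjecture is therefore at least as hard as that open problem, which is precisely the relation the paper points out.

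Two caveats about your sketches of the settled special cases. For $K_{3,3}$-minor-free graphs, $3$-flows do not simply ``recombine across the small clique-sums'': a $2$-sum identifies the pieces along an edge, the flow behaviour forced at that edge on the two sides need not be compatible, and a planar summand is not in general a subcontraction of $G$ (contracting the other side of a $2$-separation leaves an extra identified vertex attached to the two cut vertices), so Proposition~\ref{ident} does not directly hand facial $3$-colorability to the summands. The paper's proof of Theorem~\ref{K33free} instead extracts a $K_5$-like edge-decomposition from Wagner's theorem (Lemma~\ref{K5structure}), classifies each branch $G_{i,j}$ as admitting a modulo-$3$-orientation or a near-mod-$3$-orientation missing $v_i,v_j$, and derives a contradiction from the fact that no graph on five vertices is vertex-$3$-critical. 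Similarly, Theorem~\ref{scubic} is proved via odd-wheel subcontractions and a minimal-counterexample analysis, not by exhibiting an explicit uncolorable drawing of a non-bipartite cubic graph. These are inaccuracies only in your summaries of results the paper proves elsewhere; they do not affect your (correct) conclusion that the conjecture itself remains open and that your reduction scheme cannot close it without resolving Tutte's conjecture.
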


Interestingly, a positive answer to Conjecture~\ref{main} would also imply a positive answer to the following long-standing \bothV{Conjecture}{conjecture} by Tutte. 

\begin{conjecture}[Tutte's $3$-Flow-Conjecture, Conjecture~1.1.8 in \cite{flows}]\label{tutte}
Every $4$-edge-connected graph admits a nowhere-zero $3$-flow.
\end{conjecture}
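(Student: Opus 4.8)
This statement is \emph{Tutte's $3$-Flow Conjecture}, a famous long-standing open problem, so rather than claim a complete proof I will outline the strategy I consider most promising and indicate precisely where the known techniques fail. The plan is first to replace the flow condition by its \emph{modular orientation} reformulation. By Tutte's classical equivalence, a graph $G$ admits a nowhere-zero $3$-flow if and only if it admits a nowhere-zero $\ZZ_3$-flow, and the latter is equivalent to the existence of an orientation $D$ of $G$ with $d_D^+(v) \equiv d_D^-(v) \pmod{3}$ at every vertex $v$. The entire question thus becomes whether every $4$-edge-connected graph carries such a modular orientation---a purely combinatorial statement that, crucially, behaves well under edge contraction and vertex splitting.

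The second step is to strengthen the target property to \emph{$\ZZ_3$-connectivity} in the sense of Jaeger, Linial, Payan and Tarsi: $G$ is $\ZZ_3$-connected if for \emph{every} prescribed boundary $b\colon V(G) \to \ZZ_3$ with $\sum_{v} b(v) = 0$ there is an orientation realizing the divergences $b$ modulo $3$. Taking $b \equiv 0$ recovers a nowhere-zero $3$-flow, so $\ZZ_3$-connectivity is sufficient; its decisive advantage is that it is \emph{hereditary under contraction}---if $H \subseteq G$ is $\ZZ_3$-connected, then $G$ is $\ZZ_3$-connected if and only if $G/H$ is. This makes $\ZZ_3$-connectivity, not the bare flow, the correct invariant for an inductive argument.

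Building on this, I would pursue the reduction scheme of Lov\'asz, Thomassen, Wu and Zhang: in a highly edge-connected graph one locates a small structure (a low-degree vertex, or a $\ZZ_3$-connected subgraph) that can be split off or contracted so that the reduced graph still satisfies the induction hypothesis, solve the smaller instance, and lift the orientation back. Run carefully, this machinery proves that every \emph{$6$-edge-connected} graph is $\ZZ_3$-connected and hence $3$-flowable, which is the current state of the art.

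The decisive obstacle is exactly the gap between the connectivity these reductions demand and the connectivity $4$ that we are allowed to assume. Each splitting or contraction step must avoid creating a small edge cut that would invalidate the induction, and ensuring this typically spends the surplus of edge-connectivity above $4$; starting from only $4$-edge-connectivity leaves no room, and the reductions break down. Overcoming this would require either reduction operations that genuinely preserve $4$-edge-connectivity, or a global argument that never passes through local reductions at all. Closing this gap is the full content of Tutte's conjecture and lies beyond the present methods---indeed it is precisely why the authors instead seek the implication through their Conjecture~\ref{main}; I therefore offer the above as an identification of the real difficulty rather than as a finished proof.
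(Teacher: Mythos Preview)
Your assessment is correct: the statement is Tutte's $3$-Flow Conjecture, which the paper records as an open problem and does \emph{not} prove. There is therefore no ``paper's own proof'' to compare against; the paper's contribution here is only the observation (in the paragraph following the conjecture) that their Conjecture~\ref{main}, combined with Theorem~\ref{4conn}, would imply Tutte's conjecture. You correctly identify the problem as open, accurately summarize the state of the art (the modular-orientation reformulation, $\ZZ_3$-connectivity, and the Lov\'asz--Thomassen--Wu--Zhang $6$-edge-connectivity theorem), and pinpoint the genuine obstruction, so nothing further is required.
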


\bothV{
To see this, suppose Conjecture~\ref{main} holds true, and let $G$ be a given $4$-edge-connected graph. By Theorem~\ref{4conn}, $G$ is facially $3$-colorable. Since $G$ is $4$-edge-connected, so is each of its subcontractions. Since each $K_{3,n}^+$ has a vertex of degree $3$, this means that $G$ has no subcontraction isomorphic to a $K_{3,n}^+$ with $n \ge 4$. Hence, Conjecture~\ref{main} yields that $G$ is $3$-flowable, as claimed in Tutte's conjecture.}
{\begin{claim}
		If Conjecture~\ref{main} holds true, then also Conjecture~\ref{tutte} holds true.
	\end{claim}
	
	\begin{proof}
Suppose Conjecture~\ref{main} holds true, and let $G$ be any given $4$-edge-connected graph. By Corollary~\ref{4conn}, $G$ is universally cell $3$-colorable. Since $G$ is $4$-edge-connected, so is each of its subcontractions. Since each $K_{3,n-3}^+, n \ge 7$ has a vertex of degree $3$, this means that $G$ has no subcontraction isomorphic to a $K_{3,n-3}^+$ with $n \ge 7$. Hence, Conjecture~\ref{main} yields that $G$ is $3$-flowable, as claimed in Tutte's conjecture.
\end{proof}
}

In Section~\ref{sec:mincex} we obtain several properties {that must be fulfilled by} smallest counterexamples to Conjecture~\ref{main}\oldV{ must have (Theorem~\ref{thm:mincex})}, and thereby limit the class of graphs for which the \bothV{C}{c}onjecture has to be checked{ (Theorem~\ref{thm:mincex})}.

We conclude with an open question concerning computational complexity in Section~\ref{sec:schluss}. 
\subsubsection*{Related work}
%This paragraph is devoted to a short overview of related results on colorings of drawings of graphs that we could find in the literature. 

The only previous work on the notion of coloring discussed in this paper (namely that colors are assigned to the cells of a drawing) we could find in the literature is solely about the planar case of this problem, that is, the problem to determine the chromatic number of the plane dual graph of a given crossing-free plane drawing of a graph. The interpretation of the 4-Color-Theorem as a result of coloring regions of the plane, albeit the original one, was not the one that caught on as much as the equivalent one of coloring vertices of a graph.
However, because any coloring result on planar graphs can be dualized as mentioned above, any such result is somewhat related to what we do, but nothing really stands out. For the reader interested in this field of research, we confer to the somewhat recent survey of Borodin \cite{placol}.

Another concept that was developed to better understand the 4-color-problem is the one of \emph{Nowhere-zero $k$-flows}, which are also used in our paper, even if mostly in the case $k=3$. This area of research is also still quite active, see Zhang \cite{flows} for a book that covers the essentials. A recent paper, which is particularly close since it also deals with Eulerian graphs, is the paper of  M\'a\v cajov\'a and \v Skoviera \cite{eulerianflows}. For the state of the art concerning Tutte's 3-Flow Conjecture, we refer to Lov\'asz, Thomassen, Wu and Zhang \cite{6connectivity}.
%In that sense, all results known about bounds on the chromatic number of subclasses of the planar graphs, via duality, carry over to a dual statement in the setting of cell-coloring planar drawings. Two examples: Every bipartite planar graph is $2$-colorable, hence, by duality, every crossing-free drawing of an Eulerian graph is cell-$2$-colorable. Similarly, every simple planar cubic graph different from $K_4$, by Brook's theorem, is $3$-colorable, and via duality, this means that every plane triangulation different from $K_4$ is cell $3$-colorable. As it would be impossible to make a complete list, we leave it at these two examples in this overview, stating clearly that other results of a similar type could be derived, simply by dualizing results known about the chromatic number of planar graphs.  

Other previous work on coloring drawings of possibly non-planar graphs we could find in the literature mostly deals with a different notion, in which colors are assigned to the \emph{edges} of a graph, and the condition is that no two edges which intersect in the drawing may be assigned the same color. This notion of coloring seems to have been studied first by Sinden\noldV{ in}~\cite{sinden}, who studied such colorings of rectilinear drawings of the complete bipartite graphs, motivated by a vehicle scheduling problem. The same notion of coloring was studied later on in a series of papers concerning colorings of rectilinear drawings of the complete graphs, compare~\cite{aich,biniaz,bose,dujmovic}. The fewest number of colors required for such a crossing-free edge-coloring, minimized over all possible (rectilinear) drawings of a graph, is also known as the \emph{geometric thickness} of the graph.

\subsubsection*{Notation and important definitions}

The graphs considered in this paper are finite multi\oldV{-}graphs. For a graph without loops and parallel edges, we use the term \emph{simple} graph. {An edge whose deletion increases the number of connected components is called a \emph{bridge}. Consequently, a graph that does not contain a bridge is called \emph{bridgeless}.} We write $e=uv$ for an edge $e$ in an undirected graph to indicate that $u$ and~$v$ are the endpoints of $e$, and $a=(u,v)$ for an arc in a directed graph to indicate that $a$ has tail~$u$ and head $v$. Given an undirected graph $G$, we denote by $V(G)$ its vertex\bothV{-}{ set} and by~$E(G)$ its (multi\=/)edge\bothV{-}{ }set. Similarly, for an orientation $D$ of $G$ we denote by $V(D)=V(G)$ its vertex\bothV{-}{ }set and by $A(D)$ its (multi\=/)arc\bothV{-}{ }set\bothV{, and w}{, which contains exactly one arc (directed in one of the two possible directions), for each edge of $G$. W}e say that $G$ is the \emph{underlying graph} of $D$. For a graph $G$ and a vertex $v \in V(G)$, we denote by $d_G(v)$ its degree in $G$, which is the number of incident edges of $v$, where loops are counted with multiplicity $2$. Similarly, if $D$ is a digraph and~\mbox{$v \in V(D)$} then by $d_D^+(v)$ ($d_D^-(v)$) we denote the out-degree (in-degree) of $v$ in $D$, \bothV{i.e.}{that is}, the number of out-arcs (in-arcs) incident with $v$.\nbothV{ We denote the}{ The} subgraph of $G$ induced by a vertex set $U\subseteq V(G)$ {is denoted }by $G[U]=(U,\{uv\in E(G)\mid u,v\in U\})$.

Let $G$ and $G'$ be graphs. 
We say that $G'$ is a \emph{minor} of $G$, if $G'$ is isomorphic to a graph obtained from $G$ via a sequence of finitely many edge contractions, edge deletions and vertex deletions. Given a vertex set $X \subseteq V(G)$, we denote by $G/X$ the graph obtained from $G$ by adding a single vertex $v_X$, adding an edge between $u$ and $v_X$ to $E(G/X)$ for every edge $uv \in E(G)$ with $u \notin X, v \in X$, then deleting all vertices in $X$. We say that the multi-graph $G/X$ is obtained from $G$ by \emph{identifying} $X$ into $v_X$. We say that $G'$ is a \emph{subcontraction} of $G$ if there is a sequence $G=G_0, G_1, \ldots, G_\ell\cong G'$ of graphs {(the symbol ``$\cong$'' meaning ``isomorphic to'')} such that for every $i \in \{1,\ldots,\ell\}$ there is $X_i \subseteq V(G_{i-1})$ such that $G_i=G_{i-1}/X_i$. Note that every subcontraction of $G'$ is a subcontraction of $G$ as well.

By a \emph{drawing} $\Gamma$ of a graph $G$ we mean an immersion of $G$ in the plane such that vertices are mapped to distinct points and edges are represented by continuous curves connecting the images of their endpoints (closed curves in the case of loops), but which do not contain any other vertices. Edges may self-intersect and pairs of edges may intersect, but there are only finitely many points of intersection. If required, for $e \in E(G)$ we will use the notation $\gamma(e)$ to indicate the set of points on the curve representing $e$, and $\gamma^\circ(e)$ for the set of interior points of $\gamma(e)$ (distinct from the images of the endpoints of $e$). {We also call $G$ the \emph{underlying graph} of the drawing $\Gamma$.} A more restricted class of drawings are the \emph{good drawings}. They are defined as the drawings satisfying the following additional properties:
\begin{itemize}
\item \bothV{no point is contained}{no point in the plane is contained} in the interiors of more than two edges,
\item every non-loop edge is free of self-intersections and loops do not self-intersect in their interior,
\item every two adjacent edges intersect only in their common endpoints,
\item non-adjacent edges intersect in at most one common point, which is a proper crossing, and
\item loops do not intersect other edges at all.
\end{itemize}
Good drawings of simple graphs have been studied extensively in the literature, mainly because the crossing number of a graph is attained by good drawings. We will go a little further into this in Section \ref{sec:3colors}\bothV{,}{;} see also \cite{crossingnumbers} for a survey on this topic. \oldV{In case $\Gamma$ is a drawing of $G$, we also say that $G$ is the \emph{underlying graph} of $\Gamma$.}

By $\mathcal{F}(\Gamma)$ we denote the set of the \emph{\bothV{faces}{cells}} of $\Gamma$, \bothV{i.e.}{that is}, the connected components of $\mathbb{R}^2-\Gamma$. Note that there is always a unique unbounded \bothV{face}{cell} \bothV{which surrounds}{surrounding} the whole drawing, which we refer to as the \emph{outer \bothV{face}{cell}}. Placing an additional vertex at every inner intersection of at least two edges in a drawing $\Gamma$ and making
\bothV{two such vertices adjacent}{these vertices adjacent to all (new or original) vertices} whenever they appear consecutively on the same edge of the drawing, we obtain an embedded planar graph $G_\text{isc}(\Gamma)$, which we refer to as the \emph{planarization} of $\Gamma$. See \bothV{Figure}{Fig.}~\ref{fig:wildm} for an example. We refer to the vertices of $G_\text{isc}(\Gamma)$ situated at intersections of edges in $\Gamma$ as \emph{intersection vertices}. The vertices which already appear in $\Gamma$ are called \emph{normal vertices}. Note that $G_\text{isc}(\Gamma)$ captures the most relevant combinatorial properties of the drawing $\Gamma$. In particular, the cell decompositions of $\mathbb{R}^2-\Gamma$ and $\mathbb{R}^2-G_\text{isc}(\Gamma)$ are isomorphic. By $G^\top(\Gamma)$ we will denote the planar dual graph of~$G_\text{isc}(\Gamma)$, \bothV{i.e.}{that is}, whose vertices correspond to the faces of $G_\text{isc}(\Gamma)$ (or $\Gamma$), two of which have a connecting edge for every shared boundary-edge in $G_\text{isc}(\Gamma)$. The dual graph $G^\top(\Gamma)$ has a natural planar embedding in which the vertices are placed inside the corresponding \bothV{faces}{cells} of $\Gamma$ and every dual edge crosses its corresponding boundary edge, see \bothV{Figure}{Fig.}~\ref{fig:wildr} for an example.

\begin{figure}[ht]
	\centering
	\begin{subfigure}{0.34\textwidth}
		\centering
		\includegraphics[scale=0.7,page=1]{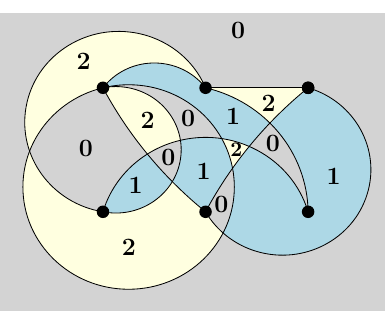}
		\caption{Drawing $\Gamma$ of $G$ colored by $c$}
		\label{fig:wildl} 
	\end{subfigure}
	\begin{subfigure}{0.31\textwidth}
		\centering
		\includegraphics[scale=0.7,page=2]{wild_drawing.pdf}
		\caption{The planarization $G_\text{isc}(\Gamma)$}
		\label{fig:wildm} 
	\end{subfigure}
	\begin{subfigure}{0.3\textwidth}
		\centering
		\includegraphics[scale=0.7,page=3]{wild_drawing.pdf}
		\caption{$G^\top(\Gamma)$, dual to $G_\text{isc}(\Gamma)$}
		\label{fig:wildr} 
	\end{subfigure}
	\caption{\bothV{Face-}{Cell }3-coloring $c$ of a drawing of a multigraph $G$, associated graphs $G_\text{isc}$ and $G^\top$}
	\label{fig:wild}
\end{figure}
 
\section{Existence of 2- and 3-Colorable Drawings}
\label{sec:2colors}

In this section we derive circumstances under which drawings of graphs are \bothV{facially}{cell }$2$- or $3$-colorable, leading to the proofs of Propositions~\ref{eulerian} and~\ref{leaf}.

\restateeulerian*

\begin{proof}
Let $\Gamma$ be a drawing of the graph $G$. Suppose for the first direction that $\Gamma$ has a \bothV{face-}{cell }$2$-coloring \mbox{$c\colon\mathcal{F}(\Gamma) \rightarrow \{0,1\}$}. Then for any vertex $v$ in the drawing, the incident \bothV{faces}{cells} in cyclical order around the vertex have to alternate between the colors $0$ and $1$. Thus, the degree of $v$ must be even. Hence, $G$ is indeed Eulerian.

Suppose vice versa that $G$ has only vertices of even degree, and without loss of generality assume that $G_\text{isc}(\Gamma)$ is connected. Note that every intersection vertex in $G_{\text{isc}}(\Gamma)$ has even degree because every edge of the drawing passing through the corresponding intersection point contributes two edge-segments incident with the intersection-vertex. Hence, $G_\text{isc}(\Gamma)$ is Eulerian as well. As is well-known~\cite{welsh}, this means that the planar dual graph $G^\top(\Gamma)$ is bipartite. The $2$-coloring of $G^\top(\Gamma)$ now yields a proper \bothV{face-}{cell }$2$-coloring of $G_\text{isc}(\Gamma)$ respectively {a proper cell $2$-coloring of} $\Gamma$.
\end{proof}

Using {the fact} that drawings of Eulerian graphs are \bothV{face-}{cell }$2$-colorable, we can prove that every bridgeless graph admits a \bothV{face-}{cell }$3$-colorable drawing. {An illustration of the proof is provided in Fig.~\ref{outerdraw}.}

\begin{figure}[ht]
\centering  
\begin{subfigure}[t]{.45\textwidth}
\begin{flushright}
\includegraphics[width=57mm,page=1]{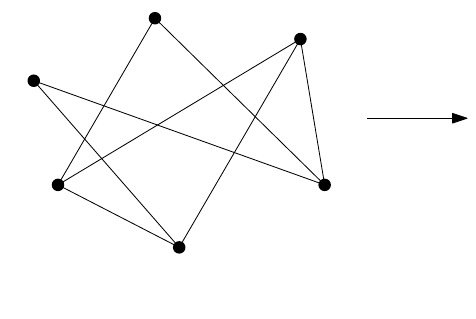}
\caption{Outer Drawing $\Gamma$ of graph $G$}
\label{fig:3col1}
\end{flushright}
\end{subfigure}
\begin{subfigure}[t]{.45\textwidth}
\includegraphics[width=57mm, page=2]{3colorabledrawings}
\caption{Drawing $\Gamma^+$ of the Eulerian graph $G^+$}
\label{fig:3col2}
\vspace*{1em}
\end{subfigure}
\begin{subfigure}[b]{.45\textwidth}
\begin{flushright}
\includegraphics[width=57mm, page=4]{3colorabledrawings}
\caption{Corresponding \bothV{face-}{cell }$3$-coloring of $\Gamma$}
\label{fig:3col4}
\end{flushright}
\end{subfigure}
\begin{subfigure}[b]{.45\textwidth}
\includegraphics[width=57mm, page=3]{3colorabledrawings}
\caption{\bothV{Face-}{Cell }$2$-coloring of $\Gamma^+$\hspace{30pt}$\,$}
\label{fig:3col3}
\end{subfigure}
\caption{Illustration of the argument in the proof of Proposition~\ref{outerface}, to be read \subref{fig:3col1}, \subref{fig:3col2}, \subref{fig:3col3}, then \subref{fig:3col4}.}\label{outerdraw}
\end{figure}

\begin{restatable}{proposition}{restateouterface}\label{outerface}
Every drawing $\Gamma$ of a bridgeless graph $G$ in which all vertices lie on the outer \bothV{face}{cell} has a \bothV{face-}{cell }$3$-coloring.
\end{restatable}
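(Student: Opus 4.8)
The plan is to reduce to the Eulerian case handled by Proposition~\ref{eulerian}, by adding a few edges inside the outer face to repair all odd degrees. Let $T\subseteq V(G)$ be the set of vertices of odd degree; by the handshake lemma $|T|$ is even, so fix a partition of $T$ into pairs $\{u_1,v_1\},\dots,\{u_t,v_t\}$. Since every vertex of $G$ lies on the boundary of the outer face of $\Gamma$, for each $i$ I can add a new edge $e_i = u_iv_i$ whose relative interior lies entirely in the open outer face of $\Gamma$; routing these curves may create finitely many crossings among themselves, but the result is still a valid drawing $\Gamma^+$, of a graph $G^+$ in which every vertex has even degree. Hence $G^+$ is Eulerian, and Proposition~\ref{eulerian} supplies a face-$2$-coloring $c^+\colon\mathcal{F}(\Gamma^+)\to\{0,1\}$.

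The next step is the structural observation that inserting the $e_i$ affects only the outer face: since the relative interiors of the $e_i$ are disjoint from $\Gamma$ and lie in the open outer face $f_{\mathrm{out}}$ of $\Gamma$, no old edge of $\Gamma$ receives a new crossing vertex, every bounded face of $\Gamma$ is still a face of $\Gamma^+$, and any two bounded faces of $\Gamma$ that are adjacent in $\Gamma$ stay adjacent in $\Gamma^+$. (Only $f_{\mathrm{out}}$ really changes: $\Gamma^+$ cuts it into several faces.)

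Finally I would define $c\colon\mathcal{F}(\Gamma)\to\{0,1,2\}$ by $c(b)=c^+(b)$ for every bounded face $b$ of $\Gamma$, and $c(f_{\mathrm{out}})=2$, and check that $c$ is proper: two adjacent bounded faces of $\Gamma$ are adjacent in $\Gamma^+$, so they differ under $c^+$ and hence under $c$; a bounded face adjacent to $f_{\mathrm{out}}$ has a color in $\{0,1\}$, which is different from $2$; and $f_{\mathrm{out}}$ is not adjacent to itself because, $G$ being bridgeless, $\Gamma$ has no self-touchings (as noted in the introduction). This yields the desired face-$3$-coloring of $\Gamma$.

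The individual steps are routine, so the only thing demanding genuine care is the structural claim in the middle paragraph --- that adding edges in the outer face neither merges, splits, nor creates new adjacencies among the bounded faces of $\Gamma$ --- since that is precisely what lets a face-$2$-coloring of $\Gamma^+$ restrict consistently to the bounded faces of $\Gamma$ while freeing the third color for $f_{\mathrm{out}}$. I expect the mild bookkeeping with the planarizations $G_\text{isc}(\Gamma)$ and $G_\text{isc}(\Gamma^+)$ to be the fiddliest part, but not a real obstacle.
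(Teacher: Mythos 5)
Your proposal is correct and takes essentially the same route as the paper: the paper repairs parity by adding a single new vertex $v_O$ in the outer face joined to all odd-degree vertices (so no new crossings arise), while you add a matching on the odd-degree vertices drawn inside the outer face (possibly crossing one another, which is harmless since Proposition~\ref{eulerian} applies to arbitrary drawings). In both versions one then takes the face-$2$-coloring of the Eulerian drawing, restricts it to the interior faces of $\Gamma$, and gives the outer face the third color, using bridgelessness to exclude a self-touching outer face; there is no gap.
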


\begin{proof}
Let $G$ be a bridgeless graph and let $\Gamma$ be a drawing of $G$ in which every vertex is incident to the outer \bothV{face}{cell}. Let $O \subseteq V(G)$ be the set of vertices of odd degree in $G$. Let $G^+$ be the graph obtained from $G$ by adding a new vertex $v_O$ and all edges from $v_O$ to $v$ for $v \in O$. This way, we achieve that $G^+$ is an Eulerian graph (note that, by the handshake\bothV{-}{ }lemma, $d(v_O)=|O|$ must be even). Furthermore, since $\Gamma$ is a drawing in which all vertices in $O$ are incident to the outer \bothV{face}{cell}, we can obtain a drawing $\Gamma^+$ of $G^+$ by placing $v_O$ within the outer \bothV{face}{cell} of the drawing~$\Gamma$ and connecting it to the vertices in $O$ on the outer \bothV{face}{cell} without introducing any new edge-intersections (see \bothV{Figure}{Fig.}~\ref{outerdraw} for an illustration). By Proposition~\ref{eulerian}, $\Gamma^+$ now admits a \bothV{face-}{cell }$2$-coloring $c\colon\mathcal{F}(\Gamma^+)\rightarrow \{0,1\}$. Note that every interior \bothV{face}{cell} of the drawing $\Gamma$ is also an interior \bothV{face}{cell} of the drawing $\Gamma^+$, and that two interior \bothV{faces}{cells} of $\Gamma$ are adjacent if and only if they are in $\Gamma^+$. We now define a $3$-coloring of the \bothV{faces}{cells} of $\Gamma$ by assigning color $c(f) \in \{0,1\}$ to each interior \bothV{face}{cell} $f$ of~$\Gamma$ and by coloring the outer \bothV{face}{cell} of $\Gamma$ with color $2$. By definition of $c$, any two adjacent interior \bothV{faces}{cells} have a different color and the outer \bothV{face}{cell} has a color different from any other \bothV{face}{cell}. Since $G$ is bridgeless, the outer \bothV{face}{cell} does not touch itself. Thus, in total, $\Gamma$ is \bothV{face-}{cell }$3$-colorable.
\end{proof}

While we mostly deal with bridgeless graphs in this paper, the \emph{existence} of a \bothV{face-}{cell }3-coloring of any drawing of a graph does not imply bridgelessness. In \bothV{Figure}{Fig.}~\ref{fig:noleafconv}, you can see a drawing of a graph with a bridge, which has a \bothV{face-}{cell }3-coloring.

\begin{figure}[ht]
\centering  
%\begin{subfigure}{0.4\textwidth}
%\centering  
%\includegraphics[scale=0.6,page=1]{noleafconvex.pdf}
%\end{subfigure}
%\begin{subfigure}{0.4\textwidth}
%\centering  
%\includegraphics[scale=0.6,page=2]{noleafconvex.pdf}
%\end{subfigure}
\includegraphics[scale=0.8]{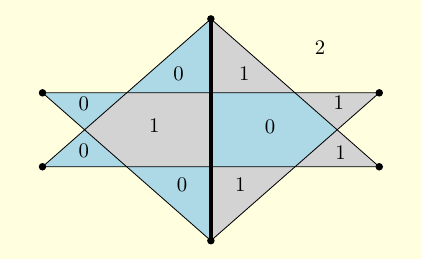}
\caption{A drawing of a graph $G$ with a bridge with a \bothV{face-}{cell }3-coloring: the bridge of $G$ is fat.}\label{fig:noleafconv}
\end{figure}

\restateleaf*

\begin{proof}
{An illustration of the key arguments of this proof is provided in Fig.~\ref{fig:prop3fig}.}
{
\begin{figure}[ht]
\centering  
\includegraphics[scale=.9]{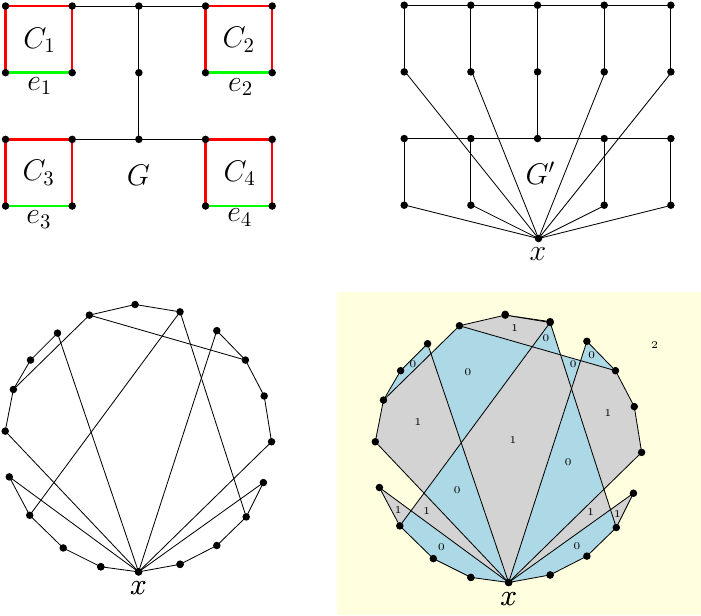}
\caption{Illustration of the proof of the Proposition~\ref{leaf}. Top left: A graph $G$ with no vertices of degree~$1$. A maximal collection of vertex-disjoint cycles is marked fat and red, and from each cycle an edge is selected and highlighted green. Top right: The bridgeless graph $G'$ obtained from $G$, containing the new vertex $x$. Bottom left: A drawing of $G'$ in which all vertices lie on the outer cell. Bottom right: A drawing of $G$ obtained from this drawing of $G'$, equipped with a cell $3$-coloring.}\label{fig:prop3fig}
\end{figure}}

It is not hard to see that, if a graph has a vertex of degree 1, the \bothV{face}{cell} incident to that vertex touches itself at its edge. We are therefore left to show that graphs $G$ with no \bothV{leafs}{leaves}, that is, without vertices of degree 1, have a drawing which is \bothV{face-}{cell }$3$-colorable. 

{Let $G$ be such a graph.}
We now apply the following procedure to generate a drawing $\Gamma$ of $G$ that introduces some intersections making the resulting planar graph $G_\text{isc}(\Gamma)$ bridgeless: starting with $G$, as long as the remaining graph has a cycle, remove all vertices of that cycle and set it aside. What we end up with is a family $C_1,...,C_k$ of cycles and a remainder of vertices, which induces a forest $F$. Construct the graph $G'$ from $G$ in the following way: For any $i\in\{1,...,k\}$ choose an edge $e_i\in C_i$, then add a vertex $x$, which is adjacent to the endpoints of all edges $e_i$, then delete these edges. This gives rise to a graph $G'$ with the property that every drawing of it is a drawing of $G$ in which the edges $e_i$ all intersect at least once, namely at the location of $x$. To prove that some of these drawings admit a \bothV{face-}{cell }3-coloring, by Proposition \ref{outerface} it suffices to prove that $G'$ is bridgeless. Quite obviously, its subgraph $G'[\{x\}\cup\bigcup_{i=1}^k V(C_k)]$ is 2-edge-connected, so it suffices to prove that $G'-e$ is connected for every edge $e \in E(G')$ at least one of whose endpoints lies in $V(F)$. We will do this by showing that both endpoints of such an edge $e$ can either reach each other or reach a vertex in $\{x\}\cup\bigcup_{i=1}^k V(C_k)$. Let $e=uv$ be such an edge and consider the endpoint $v$ of $e$. If $v \in \{x\}\cup\bigcup_{i=1}^k V(C_k)$, our claim holds trivially{ for $v$}. Otherwise, we have $v \in V(F)$. Consider a longest path $P=v,v_1,\ldots,v_\ell$ in $F-e$ starting at $v$ {(if no path of positive length in $F-e$ starting at $v$ exists, then $P$ is defined to consist only of the single vertex~$v$)}. If $v_\ell=u$, then we have found a path in $G'-e$ connecting $u$ and $v$. Otherwise, since~$v_\ell$ has degree at least two in $G$, and, thus, in $G'$ as well, it must be incident to an edge $f$ in $G'$ distinct from the last edge of $P$. Let $v'$ be the other end of $f$. Since $F$ contains no cycles, we must have $v' \notin \{v,v_1,\ldots,v_\ell\}$, and since $P$ is longest, we must have $v' \notin V(F)$. Hence, $P+f$ forms a path connecting $v$ to $\{x\}\cup\bigcup_{i=1}^k V(C_k)$ in $G'-e$, which again yields the desired claim. This shows that, indeed, $G'$ is $2$-edge-connected and concludes the proof.
\end{proof}

\section{Sufficient Conditions for \bothV{Facial}{Universal Cell} 3-Colorability}\label{sec:3colors}

In this section we show conditions \bothV{on}{for} a graph $G$ that guarantee that every drawing of $G$ is \bothV{face-}{cell }$3$-colorable, leading to the proofs of Theorem~\ref{thm:3flowsuff} and \bothV{Theorem}{Corollary}~\ref{4conn}.
The following well-known equivalence between nowhere-zero $3$-flows and special orientations of graphs will be used frequently in our study of the relationships between nowhere-zero $3$-flows and \bothV{facially}{universally cell} $3$-colorable graphs. 

\begin{definition}
Let $G$ be a graph. An orientation $D$ of $G$ is called \emph{modulo-$3$-orientation}, if for any vertex $v \in V(D)$, the \emph{excess} $\text{exc}_D(v)\coloneqq d_D^+(v)-d_D^-(v)$ at $v$ is divisible by $3$.
\end{definition}

\begin{lemma}[Folklore, see also \cite{flows}, Lemma 4.1.2]\label{mod3or}
A graph $G$ is $3$-flowable if and only if it admits a modulo-$3$-orientation.
\end{lemma}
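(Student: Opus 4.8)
The plan is to exploit the single arithmetic fact that $2 \equiv -1 \pmod 3$ in $\mathbb{Z}_3$, which lets one pass freely between the two admissible flow values $\{1,2\} = \mathbb{Z}_3 \setminus \{0\}$ and reversals of arcs. Both implications are then purely local computations at each vertex.

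For the direction ``modulo-$3$-orientation $\Rightarrow$ $3$-flowable'', suppose $D$ is a modulo-$3$-orientation of $G$. I would simply take the constant weighting $f\colon A(D) \to \mathbb{Z}_3 \setminus \{0\}$, $f(e)=1$ for every arc $e$. Then at any $v \in V(D)$ the net out-flow $\sum_{e=(v,w)\in A(D)} f(e) - \sum_{e=(w,v)\in A(D)} f(e)$ equals $d_D^+(v) - d_D^-(v) = \mathrm{exc}_D(v)$, which by hypothesis is divisible by $3$, i.e.\ equals $0$ in $\mathbb{Z}_3$. Hence $(D,f)$ satisfies Kirchhoff's law over $\mathbb{Z}_3$ and is nowhere zero, so $G$ is $3$-flowable.

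For the converse, start from a nowhere-zero $3$-flow $(D,f)$, so $f(e) \in \{1,2\}$ for every arc $e$ of $D$. I would build a new orientation $D'$ of $G$ by reversing exactly those arcs $e$ with $f(e)=2$ (and keeping the rest), and set $f'\equiv 1$ on $A(D')$. The point is that reversing an arc flips the sign of its contribution to the flow balance at each of its two endpoints, and $-2 = 1$ in $\mathbb{Z}_3$; hence the balance of $(D',f')$ at each vertex $v$, computed in $\mathbb{Z}_3$, coincides with the balance of $(D,f)$ at $v$, which is $0$. But the balance of $(D',f')$ at $v$ is precisely $d_{D'}^+(v) - d_{D'}^-(v) = \mathrm{exc}_{D'}(v)$ reduced mod $3$, so $\mathrm{exc}_{D'}(v) \equiv 0 \pmod 3$ for every $v$, i.e.\ $D'$ is a modulo-$3$-orientation of $G$.

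Since the whole argument is local at each vertex and uses only the structure of $\mathbb{Z}_3$, I do not expect any genuine obstacle. The only things warranting care are the sign bookkeeping when an arc is reversed (replacing a ``$+2$'' contribution by a ``$-1$''~$=$~``$+2$'' contribution of the reversed arc, consistently at both endpoints) and the trivial but necessary observation that flipping a subset of arcs of an orientation again yields an orientation of the same underlying graph $G$, so $D'$ is legitimate.
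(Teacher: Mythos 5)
Your proof is correct. The paper gives no proof of Lemma~\ref{mod3or} at all — it cites the statement as folklore (Lemma~4.1.2 in \cite{flows}) — and your argument (assigning the constant value $1$ on a modulo-$3$-orientation for one direction, and for the converse reversing exactly the arcs carrying value $2$ and using $2\equiv -1 \pmod 3$ to see that the vertex balances are unchanged) is precisely the standard argument behind that citation, with the sign bookkeeping handled correctly.
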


We are now ready to prove Theorem~\ref{thm:3flowsuff}. The rough idea is illustrated by an example in \bothV{Figure}{Fig.}~\ref{fig:K33}.

\restatethreeflowsuff*

\begin{figure}[ht]
\centering
\begin{subfigure}{0.33\textwidth}
\centering
\includegraphics[scale=0.75]{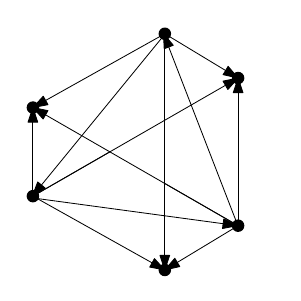}
\caption{Mod.-$3$-orientation of $K_{3,3}$}
\label{fig:K33l} 
\end{subfigure}
\begin{subfigure}{0.36\textwidth}
\centering
\includegraphics[scale=0.75]{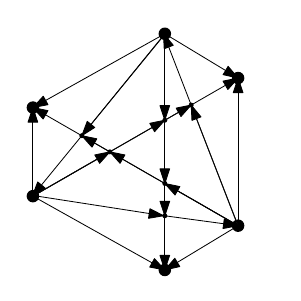}
\caption{Mod.-$3$-orientation of $G_\text{isc}(\Gamma)$}
\label{fig:K33m} 
\end{subfigure}
\begin{subfigure}{0.29\textwidth}
\centering
\includegraphics[scale=0.75]{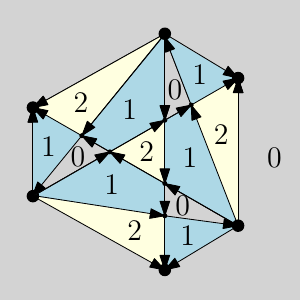}
\caption{Induced \bothV{face-}{cell }3-coloring $c$}
\label{fig:K33r} 
\end{subfigure}
\caption{\bothV{Face-}{Cell }3-coloring $c$ of a drawing $\Gamma$ of $K_{3,3}$ given by Theorem~\ref{thm:3flowsuff}. {The coloring $c$ of the drawing can be obtained from the modulo $3$-orientation with the following process (we refer to{ Zhang}~\cite{flows}, Chapter ``flows and face colorings'' for a proof of the correctness of this process): We use the color-set $\mathbb{Z}_3\simeq \{0,1,2\}$. We start by picking a face (say, the outer face) and assign color $0$ to it. Now, until the coloring is complete, we pick a face $f_1$ in the drawing that has not yet been colored but shares an edge~$e$ with an already colored face $f_2$. We set $c(f_1):=c(f_2)+1$ (summation modulo $3$) if $f_1$ is to the right of the directed edge $e$ in the modulo $3$-orientation of $G_\text{isc}(\Gamma)$, and $c(f_1):=c(f_2)-1$ otherwise. The properties of the modulo $3$-orientation can be used to show that this indeed always yields a proper cell $3$-coloring.}}
\label{fig:K33}
\end{figure} 
\begin{proof}
Let $G$ be a $3$-flowable graph. By Lemma~\ref{mod3or}, there is a modulo-$3$-orientation $D$ of $G$. Let~$\Gamma$ be a given drawing of $G$. In order to show that $\Gamma$ is $3$-colorable, it suffices to show that the embedded planar graph $G_\text{isc}(\Gamma)$ has a \bothV{face-}{cell }$3$-coloring. By Theorem~\ref{thm:flowcoloringduality} and Lemma~\ref{mod3or}, we can do this by constructing a modulo-$3$-orientation $D_\text{isc}(\Gamma)$ of $G_\text{isc}(\Gamma)$. For every oriented edge $(u,v)$ in the orientation $D$ of $G$, consider the corresponding curve in the drawing $\Gamma$. In $G_\text{isc}(\Gamma)$, this curve corresponds to a trail $u=x_0,x_1,\ldots,x_\ell=v$ where every interior vertex $x_i$ is an intersection vertex. We now define the orientation of the edges $x_{i-1}x_i, i=1,\ldots,\ell$ in $D_\text{isc}(\Gamma)$ by orienting this trail from $u$ towards $v$, \bothV{i.e.}{that is}, we have $(x_{i-1},x_i) \in A(D_\text{isc}(\Gamma))$ for all $i$. With this, we uniquely assign orientations to all the edges of $G_\text{isc}(\Gamma)$. It remains to be seen why $D_\text{isc}(\Gamma)$ is a modulo-$3$-orientation of $G_\text{isc}(\Gamma)$. For this, let $x \in V(D_\text{isc}(\Gamma))$ be arbitrary. If $x$ is a normal vertex, then clearly $\text{exc}_{D_\text{isc}(\Gamma)}(x)=\text{exc}_{D}(x)$, and so the excess at this vertex is divisible by $3$ as required. If $x$ is an intersection vertex, every (closed) directed trail in $D_{\text{isc}}(\Gamma)$ induced by an edge of $\Gamma$ through the intersection point at $x$ has to enter and leave the point $x$ the same number of times. Hence, we have $\text{exc}_{D_{\text{isc}}(\Gamma)}(x)=d_{D_{\text{isc}}(\Gamma)}^+(x)-d_{D_{\text{isc}}(\Gamma)}^-(x)=0$. We conclude that the excess of every vertex in the orientation is divisible by $3$, and so $D_\text{isc}(\Gamma)$ defines a modulo-$3$-orientation of $G_\text{isc}(\Gamma)$. Since $\Gamma$ was arbitrary, this concludes the proof.
\end{proof}

We have the following interesting consequence of Theorem~\ref{thm:3flowsuff}, which is useful in order to limit the complexity of the drawings which have to be checked to certify \bothV{facial}{universal cell} $3$-colorability.

\begin{corollary}\label{cor:crossingsmakeiteasier}
Let $G$ be a graph, and let $\Gamma^\ast$ be a \bothV{face-}{cell }$3$-colorable drawing of $G$. \nbothV{Let $\Gamma'$ be}{If $\Gamma'$ is} a drawing of $G_\text{isc}(\Gamma^\ast)$\bothV{ and}{, then $\Gamma'$ is cell $3$-colorable. Further,} \nbothV{let $\Gamma$ be}{if $\Gamma$ is} a drawing of $G$ such that $\mathbb{R}^2-\Gamma$ and $\mathbb{R}^2-\Gamma'$ induce the same cell-decompositions of the plane, that is, $G^\top(\Gamma)=G^\top(\Gamma')$\nbothV{. T}{, t}hen also $\Gamma$ has a \bothV{face-}{cell }$3$-coloring.
\end{corollary}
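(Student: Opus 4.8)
The plan is to reduce the statement to Theorems~\ref{thm:flowcoloringduality} and~\ref{3flowsuff}, exploiting the fact that a face-$3$-coloring of a drawing is, by definition, nothing but a proper $3$-coloring of its dual graph $G^\top$. Since $G^\top(\Gamma) = G^\top(\Gamma')$ by hypothesis, the drawing $\Gamma$ admits a face-$3$-coloring if and only if $\Gamma'$ does. Hence the entire task boils down to producing a face-$3$-coloring of $\Gamma'$, and the drawing $\Gamma$ itself plays no further role.

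Next I would pass to the planar graph $H \coloneqq G_\text{isc}(\Gamma^\ast)$, of which $\Gamma'$ is (by assumption) a drawing. The key observation is that $\Gamma^\ast$ itself furnishes a \emph{crossing-free} embedding of $H$ in the plane, and that the cell decomposition this embedding induces coincides with the one induced by $\Gamma^\ast$ — this is precisely the statement, recorded in the notation section, that $\mathbb{R}^2-\Gamma^\ast$ and $\mathbb{R}^2-G_\text{isc}(\Gamma^\ast)$ induce isomorphic cell decompositions. Consequently the given face-$3$-coloring of $\Gamma^\ast$ is, verbatim, a face-$3$-coloring of this planar embedding of $H$, so Theorem~\ref{thm:flowcoloringduality} (applied with $k=3$) yields that $H$ admits a nowhere-zero $3$-flow.

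Finally, Theorem~\ref{3flowsuff} tells us that $H$ is facially $3$-colorable, i.e., \emph{every} drawing of $H$ — with crossings or without — has a face-$3$-coloring. In particular $\Gamma'$, being a drawing of $H = G_\text{isc}(\Gamma^\ast)$, has one, and by the reduction of the first paragraph the same colors (transported along the identification $G^\top(\Gamma) = G^\top(\Gamma')$) give a face-$3$-coloring of $\Gamma$. I do not anticipate a genuine obstacle; the only points requiring a little care are the opening reduction — making explicit that ``admitting a face-$3$-coloring'' is an invariant of $G^\top$ alone — and the routine check that the planarization's natural embedding and $\Gamma^\ast$ share the same faces together with the same adjacency relation, so that a coloring of one is a coloring of the other.
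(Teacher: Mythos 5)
Your proposal is correct and follows essentially the same route as the paper: transfer the coloring of $\Gamma^\ast$ to the natural planar embedding of $G_\text{isc}(\Gamma^\ast)$, use Theorem~\ref{thm:flowcoloringduality} to get a nowhere-zero $3$-flow, apply Theorem~\ref{3flowsuff} to conclude facial $3$-colorability of $G_\text{isc}(\Gamma^\ast)$, and then transport the coloring of $\Gamma'$ to $\Gamma$ via $G^\top(\Gamma)=G^\top(\Gamma')$. The only difference is that you spell out the reduction steps the paper treats as immediate.
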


\begin{proof}
Since $\Gamma^\ast$ is \bothV{face-}{cell }$3$-colorable, so is $G_\text{isc}(\Gamma^\ast)$. Now Theorem~\ref{thm:flowcoloringduality} implies that $G_\text{isc}(\Gamma^\ast)$ is $3$-flowable. By Theorem~\ref{thm:3flowsuff} this means that $G_\text{isc}(\Gamma^\ast)$ is \bothV{facially}{universally cell} $3$-colorable. Thus $\Gamma'$ is \bothV{face-}{cell }$3$-colorable. This clearly means that also $\Gamma$ is \bothV{face-}{cell }$3$-colorable.
\end{proof}

A well-known fact in crossing number theory is that every drawing of a graph can be reduced by a set of local uncrossing-operations to a good drawing. This implies the following result. The proof is standard, but lengthy, hence we defer it to the appendix. A similar proof (which however only deals with simple graphs) can be found in{ the book of Schaefer}~\cite{crossingnumbers}, Lemma 1.3.

\begin{restatable}{proposition}{restatetopssuffice}\label{prop:topssuffice}
Let $G$ be a graph. Then $G$ is \bothV{facially}{universally cell} $3$-colorable if and only if all \emph{good} drawings of $G$ admit \bothV{face-}{cell} $3$-colorings.
\end{restatable}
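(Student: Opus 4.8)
\textbf{Proof plan for Proposition~\ref{prop:topssuffice}.}

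The plan is to show both implications, of which only the reverse direction (good drawings suffice) requires real work. The forward direction is trivial: if every drawing of $G$ admits a face-$3$-coloring, then in particular every good drawing does. So assume that all good drawings of $G$ admit face-$3$-colorings, and let $\Gamma$ be an arbitrary drawing of $G$; I want to produce a face-$3$-coloring of $\Gamma$. The idea is to invoke the standard uncrossing machinery from crossing-number theory together with Corollary~\ref{cor:crossingsmakeiteasier}, which lets me transfer colorability from a simpler drawing to a more complicated one that refines it.

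First I would record the local reduction operations that turn an arbitrary drawing into a good drawing without increasing the number of crossings: (i) if an edge self-intersects, or two adjacent edges cross, one can locally reroute to remove the self-intersection or the common crossing (the classical ``uncrossing'' move that cuts out the loop or swaps the two arcs between the crossing and the shared endpoint); (ii) if two edges cross more than once, one can swap the arcs between two consecutive crossings to remove a pair of crossings; (iii) if three or more edge-interiors pass through a common point, perturb slightly so that only pairwise proper crossings remain; (iv) deal with loops separately (a loop crossing another edge or self-intersecting can be shrunk close to its endpoint). Each such move, performed on $\Gamma$, strictly decreases a suitable complexity measure (number of crossing points, counted with multiplicity, plus number of self-intersections), so after finitely many moves we arrive at a good drawing $\Gamma^\ast$ of $G$. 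By hypothesis $\Gamma^\ast$ has a face-$3$-coloring.

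The key point is then to run the reduction \emph{backwards} using Corollary~\ref{cor:crossingsmakeiteasier}. I would argue by induction on the complexity measure: suppose $\Gamma_1$ is a drawing of $G$ obtained from $\Gamma_0$ by a single local move of type (i)--(iv), and suppose $\Gamma_1$ is face-$3$-colorable; I claim $\Gamma_0$ is too. The crucial observation is that $\Gamma_0$ can be realized, up to homeomorphism of the plane, as a drawing whose cell decomposition equals that of \emph{some} drawing of the planarization $G_\text{isc}(\Gamma_1)$ -- intuitively, reintroducing the extra crossing(s) that the move removed only subdivides some faces of $\Gamma_1$ further, and such a subdivision is exactly what a drawing of $G_\text{isc}(\Gamma_1)$ (which has the old vertices of $\Gamma_1$ plus all its crossing vertices) can reproduce: one routes the curves of $G_\text{isc}(\Gamma_1)$ so as to recreate the local picture of $\Gamma_0$ near the reinstated crossing(s), while away from there keeping the drawing identical to $\Gamma_1$. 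Thus $G^\top(\Gamma_0)=G^\top(\Gamma')$ for a suitable drawing $\Gamma'$ of $G_\text{isc}(\Gamma_1)$, and Corollary~\ref{cor:crossingsmakeiteasier} (with $\Gamma^\ast:=\Gamma_1$, which is face-$3$-colorable) yields that $\Gamma_0$ is face-$3$-colorable. Chaining this through the finite reduction sequence from $\Gamma$ down to $\Gamma^\ast$ gives a face-$3$-coloring of the original drawing $\Gamma$, completing the proof.

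The main obstacle is the geometric bookkeeping in the inductive step: one must check, case by case over the move types (i)--(iv), that undoing a single uncrossing move genuinely corresponds to a legitimate drawing of the planarization with the prescribed cell decomposition -- in particular that no face of $\Gamma_0$ fails to arise as a face of some drawing of $G_\text{isc}(\Gamma_1)$, and that the correspondence of adjacencies is exact. This is entirely routine but tedious, which is presumably why the authors relegate it to the appendix; the conceptual content is fully carried by Corollary~\ref{cor:crossingsmakeiteasier}, and everything else is careful local surgery on curves in the plane.
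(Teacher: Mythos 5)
Your overall architecture is the same as the paper's: reduce an arbitrary drawing to a good one by finitely many local uncrossing moves and transfer face-$3$-colorability backwards along the reduction (the paper packages this as a minimal-counterexample argument on the number of crossing triples). For your moves (i), (ii) and (iv) the backward step can indeed be made to work, either via Corollary~\ref{cor:crossingsmakeiteasier} as you propose (tracing the relevant edge pieces of $G_\text{isc}(\Gamma_1)$ along the image of $\Gamma_0$ through the reinstated crossing), or, as the paper does for all cases except loops, by the more elementary observation that removing a touching merges exactly two faces, so any coloring of the simpler drawing pulls back.

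There is, however, a genuine gap at your move (iii), the splitting of a point traversed by $k\ge 3$ edge strands. First, your complexity measure does not strictly decrease under this move (counted with multiplicity it stays the same; counted as points it grows from $1$ to $\binom{k}{2}$), so termination needs the paper's fix of doing all such splittings once as a preprocessing step. Second, and more importantly, the backward step via Corollary~\ref{cor:crossingsmakeiteasier} fails here: your guiding intuition that reintroducing the removed crossings ``only subdivides faces of $\Gamma_1$ further'' is reversed for this move, since merging the $\binom{k}{2}$ pairwise crossings back into a single point \emph{destroys} the small cells of the local arrangement. In fact no drawing $\Gamma'$ of $G_\text{isc}(\Gamma_1)$ can induce the cell decomposition of $\Gamma_0$: $G_\text{isc}(\Gamma_1)$ contains $\binom{k}{2}\ge 3$ distinct degree-$4$ crossing vertices which must be placed at distinct points, and an Euler count shows that any drawing of this local gadget creates strictly more faces near the former multiple point than the $2k$ sectors that $\Gamma_0$ has there (already for $k=3$: at least $7$ versus $6$), so the face structures can never agree and the corollary is not applicable. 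The statement you need in this case is nevertheless true, but for a different and simpler reason, namely the one the paper uses: splitting a multiple point only adds vertices and edges to the dual graph while keeping every old face and every old adjacency, so a face-$3$-coloring of the split drawing restricts to one of $\Gamma_0$ (equivalently, non-$3$-colorability is preserved by the splitting). With that substitution and the termination fix, your plan becomes a correct proof essentially along the paper's lines.
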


We now proceed to prove \bothV{Theorem}{Corollary}~\ref{4conn}. To do so, we make use of the following result of Gr\"{o}tzsch.

\begin{theorem}[Gr\"{o}tzsch's Theorem, see \cite{grotzsch}]
Every triangle-free loopless planar graph is properly $3$-vertex-colorable.
\end{theorem}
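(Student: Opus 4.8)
The plan is to prove a stronger precoloring-extension statement by induction on the number of vertices, following the strategy of Thomassen's short proof. Working directly with the bare $3$-colorability claim is awkward: the natural reducible structure in a triangle-free plane graph is a vertex of degree at most $3$, which exists because Euler's formula forces $|E| \le 2|V|-4$ and hence average degree below $4$; but a degree-$3$ vertex need not reduce cleanly, since it may see three distinct colors. The remedy is to strengthen the hypothesis so that the outer boundary cycle is \emph{precolored}, and to permit vertex identifications that force two non-adjacent neighbors of a low-degree vertex to share a color.

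First I would reduce to $2$-connected plane graphs. A graph is $3$-colorable if and only if each of its blocks is, and $3$-colorings of two blocks sharing a cut vertex can be made to agree there by permuting colors; loops are excluded by hypothesis, and parallel edges (if we work with multigraphs) do not affect vertex $3$-colorability. After fixing a plane embedding, the outer face is then bounded by a cycle $C$.

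The inductive statement, call it $(\ast)$, would assert: for every $2$-connected triangle-free plane graph $G$ whose outer cycle $C$ has length at most $6$, every proper $3$-coloring of $C$ extends to all of $G$, with a short explicit list of exceptional configurations (essentially when $|C|=6$, $C$ carries a particular coloring, and the interior is structured so as to block the extension). The induction then proceeds by case analysis on the position of $G$ relative to $C$. If $C$ has a chord, I split $G$ along it into two plane graphs, each with strictly fewer vertices and an outer cycle of length at most $6$, and glue together the two extended colorings. Similarly, a separating cycle of length $4$ or $5$ that does not bound a face lets me split $G$ into an inside part and an outside part and apply the inductive hypothesis to each; note there are no separating $3$-cycles, as $G$ is triangle-free.

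The main obstacle, and the genuine heart of the argument, is the case where $C$ has no chord and $G$ has no short separating cycle. Here I would invoke the discharging consequence of Euler's formula for triangle-free plane graphs to locate a reducible configuration near the boundary, typically a boundary-adjacent vertex of degree $2$ or $3$, or a $4$-face sharing an edge with $C$. For such a configuration I delete or identify vertices --- for instance, identifying two non-adjacent neighbors of a degree-$3$ interior vertex so that they are forced to receive the same color --- to produce a smaller triangle-free plane graph, apply $(\ast)$, and lift the resulting coloring back to $G$. The delicate points are verifying that each reduction preserves triangle-freeness, since identifications can create triangles and these must be ruled out using the absence of short separating cycles, and checking that the precise set of exceptional $6$-cycle precolorings is closed under the reductions. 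This bookkeeping is where the proof is heaviest, and it is exactly where Grötzsch's theorem earns its reputation for difficulty.
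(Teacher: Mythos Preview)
The paper does not prove Gr\"{o}tzsch's Theorem at all: it is stated with a citation to \cite{grotzsch} and used as a black box in the derivation of Theorem~\ref{dualgroetzsch} and Theorem~\ref{4conn}. There is therefore no ``paper's own proof'' to compare your proposal against.

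As for the proposal itself, what you outline is the Thomassen precoloring-extension strategy, and the broad architecture is correct: strengthen to an inductive statement about extending a precolored short facial cycle, split along chords and short separating cycles, and in the irreducible case locate a reducible configuration via Euler/discharging and perform an identification. Two cautions if you intend to flesh this out. First, the precise inductive hypothesis matters: Thomassen's argument extends precolorings of outer cycles of length at most $5$ (with one explicit exception when $|C|=5$), not $6$; pushing to $6$ changes the exceptional list substantially and is a different (harder) theorem. Second, your sketch is honest about where the real work lies --- verifying that identifications preserve triangle-freeness and that the exceptional configurations are closed under the reductions --- but as written it is a plan rather than a proof, and that bookkeeping is exactly the part that cannot be waved through. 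For the purposes of this paper, however, none of that is needed: simply citing the result, as the authors do, is entirely appropriate.
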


Using duality, this translates into the following reformulation.

\begin{theorem}\label{dualgroetzsch}
Every $4$-edge-connected planar graph admits a nowhere-zero $3$-flow.
\end{theorem}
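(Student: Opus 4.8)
The plan is to reduce the statement, via planar duality, to a $3$-vertex-coloring statement and then to invoke Gr\"otzsch's Theorem. First I would fix a $4$-edge-connected planar graph $G$; after deleting any loops (which affects neither planarity, nor edge-connectivity, nor the existence of a nowhere-zero $3$-flow, since a loop can simply be re-inserted afterwards with flow value $1$) I may assume $G$ is loopless. Fix a crossing-free embedding $\Gamma$ of $G$, with planar dual $G^\top(\Gamma)$. By Theorem~\ref{thm:flowcoloringduality}, $G$ has a nowhere-zero $3$-flow if and only if $\Gamma$ has a face-$3$-coloring; and since $\Gamma$ is crossing-free we have $G_\text{isc}(\Gamma)=G$, so a face-$3$-coloring of $\Gamma$ is exactly a proper $3$-vertex-coloring of $G^\top(\Gamma)$. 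Hence it suffices to show $\chi\bigl(G^\top(\Gamma)\bigr) \le 3$.

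To be allowed to apply Gr\"otzsch's Theorem to $G^\top(\Gamma)$ I need it to be loopless and triangle-free, and both follow from the standard duality dictionary between edge cuts in $G$ and cycles in $G^\top(\Gamma)$: a loop of $G^\top(\Gamma)$ is dual to a bridge (a $1$-edge cut) of $G$, and, more generally, the circuits of $G^\top(\Gamma)$ are exactly the bonds (inclusionwise-minimal nonempty edge cuts) of $G$, so a triangle in $G^\top(\Gamma)$ would be a bond of $G$ of size exactly $3$. Since $G$ is $4$-edge-connected it has no edge cut of size at most $3$, which rules out both a bridge and a $3$-edge bond; thus $G^\top(\Gamma)$ is a loopless, triangle-free planar (multi)graph, and Gr\"otzsch's Theorem gives $\chi\bigl(G^\top(\Gamma)\bigr)\le 3$. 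Unwinding the two equivalences above then yields the desired nowhere-zero $3$-flow of $G$.

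I do not anticipate a genuine obstacle here: the entire argument is the standard correspondence ``bridges $\leftrightarrow$ loops, $k$-bonds $\leftrightarrow$ $k$-circuits'' combined with Theorem~\ref{thm:flowcoloringduality}. The two points I would be most careful about are (i) checking that passing to the loopless case loses nothing, and (ii) the claim that a length-$3$ cycle in the multigraph $G^\top(\Gamma)$ really certifies a $3$-element edge cut of $G$ — this is exactly where $4$-edge-connectivity is used at full strength, and it is the reason the hypothesis ``triangle-free'' (rather than merely ``loopless'') in Gr\"otzsch's Theorem is precisely what matches ``$4$-edge-connected'' on the primal side.
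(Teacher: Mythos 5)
Your proposal is correct and follows essentially the same route as the paper: apply the cut--cycle duality dictionary to see that $4$-edge-connectivity of $G$ makes the planar dual loopless (indeed simple, since $2$-edge cuts are also excluded) and triangle-free, invoke Gr\"otzsch's Theorem to $3$-color the dual, and translate back via Theorem~\ref{thm:flowcoloringduality}. The only difference is that you spell out the bond--circuit correspondence and the loop reduction explicitly, which the paper leaves implicit.
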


\begin{proof}
Let $G$ be a $4$-edge-connected planar graph{ and let $\Gamma$ be a planar embedding of $G$}. Note that this means that the planar dual graph of \bothV{$G$}{$\Gamma$} is simple and triangle-free. {This can be seen as follows: Suppose towards a contradiction that the dual graph of $\Gamma$ contains a cycle $C$ of length at most $3$. Then the set of edges in $G$ corresponding to the dual edges contained in $C$ would form an edge\noldV{-}cut in $G$ of size at most $3$, separating the vertices of $G$ whose dual faces lie in the interior of $C$ from those whose dual faces lie in the exterior of $C$. This would however contradict the $4$-edge-connectivity of $G$. Therefore, the dual graph of $\Gamma$ is indeed simple and triangle-free.}

Hence, Gr\"{o}tzsch's Theorem implies that the dual graph of \bothV{$G$}{$\Gamma$} admits a proper $3$-vertex-coloring, and therefore the \bothV{faces}{cells} of \bothV{$G$}{$\Gamma$} can be properly colored with \bothV{$3$}{three} colors. The claim now follows from Theorem~\ref{thm:flowcoloringduality}.
\end{proof}

Using this, we are now able to prove \bothV{Theorem}{Corollary}~\ref{4conn}.

\restatefourconn*
\begin{proof}
Let $G$ be a $4$-edge-connected graph. We claim that for any good drawing $\Gamma$ of $G$, the planarization $G_\text{isc}(\Gamma)$ is $4$-edge-connected: suppose towards a contradiction that for some drawing~$\Gamma$ of $G$ there is an edge\bothV{-}{ }cut $S \subseteq E(G_\text{isc}(\Gamma))$ with $|S| \leq 3$ such that $G_\text{isc}(\Gamma)-S$ is disconnected. Let~$X$,~$Y$ be a partition of $V(G_\text{isc}(\Gamma))$ such that there are no edges between $X$ and $Y$ in $G_\text{isc}(\Gamma)-S$.

\bothV{Suppose, for the moment, that}{Suppose first that} both $X$ and $Y$ contain a normal vertex. Let $S' \subseteq E(G)$ be the set of edges in $G$ whose corresponding paths in $G_\text{isc}(\Gamma)$ connect a normal vertex in $X$ to a normal vertex in~$Y$. Clearly, every such path contains an edge of $S$ and thus we have $|S'| \leq |S| \leq 3$. Since~$G$ is $4$-edge-connected, this means that $G-S'$ is still connected. However, deleting all edges in~$S'$ from the drawing $\Gamma$ disconnects all normal vertices in $X$ from all normal vertices in $Y$. This contradiction shows that either $X$ or $Y$ does not contain normal vertices.

Without loss of generality suppose that $X$ consists only of intersection vertices. Let $x \in X$ be any such intersection vertex and let $e_1=u_1v_1$ and $e_2=u_2v_2$ be the two distinct edges of~$\Gamma$ crossing at $x$. For $i \in \{1,2\}$, let $P_i$ be the path in $G_{\text{isc}}(\Gamma)$ starting in $u_i$ and ending in $v_i$ which corresponds to $e_i$. Since $u_i, v_i$ are normal vertices, they are contained in $Y$. Hence, at least one edge on each of the two subpaths of $P_i$ from $x$ to $u_i$ and from $x$ to $v_i$ must contain an edge in $S$. Hence, for each $i=1,2$, the path $P_i$ contains two distinct edges from $S$, and we have $|S| \ge |S \cap E(P_1)|+|S \cap E(P_2)| \ge 4$, a contradiction.

This shows that the initial assumption was wrong and, hence, any planarization $G_\text{isc}(\Gamma)$ for a good drawing $\Gamma$ of $G$ is $4$-edge-connected. This proves the above claim. By Theorem~\ref{thm:flowcoloringduality}, Proposition~\ref{prop:topssuffice}, and Theorem~\ref{dualgroetzsch}, this implies that $G$ is \bothV{facially}{universally cell} $3$-colorable, concluding the proof.
\end{proof}

\section{An Infinite Family of Counterexamples}\label{sec:infinite}
An initial question of ours was whether maybe also the reverse of Theorem~\ref{thm:3flowsuff} holds true, \bothV{i.e.}{that is}, whether the properties of being \bothV{facially}{universally cell} $3$-colorable and being $3$-flowable are equivalent for all graphs. It turns out that for many graphs, this equivalence does indeed hold (see Section~\ref{sec:mincex}). In contrast, in this section we present a negative answer to this question by constructing an infinite family of \bothV{facially}{universally cell} $3$-colorable graphs which are not $3$-flowable.

Let $m, n \in \mathbb{N}$. Then we denote by $K_{m,n}$ the complete bipartite graph with partite sets of size~$m$ and $n$, and by \bothV{$K_{3,n}^+$}{$K_{3,n-3}^+$} the $n$-vertex-graph obtained from \bothV{$K_{3,n}$}{$K_{3,n-3}$} by adding an edge connecting two vertices in the partite set of size $3$.
{The goal of this section is to prove the following theorem.

\restateKthreens*}

For every \bothV{$n \ge 3$}{$n \ge 7$}, the graph \bothV{$K_{3,n}^+$}{$K_{3,n-3}^+$} is not $3$-flowable, as proved for instance in Proposition~2.5 of{ Li et al.}~\cite{k3n}. \bothV{In the following we show that, as soon as $n \ge 7$, $K_{3,n-3}^+$ forms}{Therefore, it remains to show that, for $n\geq4$, the graph $K_{3,n}^+$ is \bothV{facially}{universally cell} 3-colorable, yielding} a counterexample to the equivalence of \bothV{facially}{universally cell} $3$-colorable and $3$-flowable graphs. As a preparation we need the following (folklore-)fact, a proof of which (in a more general form) can for instance be found in{ the article of Ma\v cajov\'a and Rollov\'a}~\cite{macajova}, compare also Figure~\ref{fig:kmnflows}.

\begin{lemma}\label{lem:compbip}
Let $m, n \in \mathbb{N},m, n \ge 2$. Then $K_{m,n}$ admits a modulo-$3$-orientation. 
\end{lemma}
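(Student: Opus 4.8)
The plan is to invoke Lemma~\ref{mod3or} and build a modulo-$3$-orientation of $K_{m,n}$ by hand. Fix the partite sets $A=\{a_1,\dots,a_m\}$ and $B=\{b_1,\dots,b_n\}$. The only arithmetic we need is the following: if a vertex of degree $d$ is oriented with out-degree $r$, then its excess is $2r-d$, so a modulo-$3$-orientation is exactly an orientation in which $2r\equiv d\pmod 3$ at every vertex. Since $2$ is a unit mod $3$, this pins down the admissible residue of $r$ at each vertex; and since $n\ge 2$ the set $\{0,1,\dots,n\}$ already contains $\{0,1,2\}$ and hence a representative of every residue class mod $3$, so there is always ``enough room'' to prescribe the out-degrees we want.

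The engine of the proof is a subroutine that orients a single copy of $K_{2,n}$, namely all $2n$ edges between a chosen pair $a,a'\in A$ and the whole set $B$. Choose $s\in\{0,\dots,n\}$ with $2s\equiv n\pmod 3$, orient $a\to b_j$ and $b_j\to a'$ for $j\le s$, and $a'\to b_j$ and $b_j\to a$ for $j>s$. Then every $b_j$ receives exactly one and sends exactly one of these edges, so its net contribution to the excess is $0$; and $a$, $a'$ get out-degrees $s$, $n-s$, hence excesses $2s-n\equiv 0$ and $n-2s\equiv 0\pmod 3$. An identical argument gives a subroutine for a copy of $K_{3,n}$: pick $t\in\{0,\dots,n\}$ with $2t\equiv n\pmod 3$, orient all three chosen vertices of $A$ towards $b_j$ for $j\le t$ and away from $b_j$ for $j>t$; each $b_j$ then contributes $\pm3\equiv 0$ to its excess, and each of the three chosen vertices gets excess $2t-n\equiv 0$.

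To assemble the orientation, partition $A$ into pairs if $m$ is even, and into pairs together with one triple if $m$ is odd (possible since $m\ge 2$), and run the corresponding subroutine on each block together with $B$. Every edge of $K_{m,n}$ lies in exactly one of these edge-disjoint copies of $K_{2,n}$ or $K_{3,n}$, so the orientations fit together; each $a_i$ lies in exactly one block and therefore has excess divisible by $3$; and each $b_j$ picks up a net contribution $\equiv 0\pmod 3$ from each block, so its total excess is divisible by $3$. This is a modulo-$3$-orientation, and Lemma~\ref{mod3or} turns it into the desired nowhere-zero $3$-flow.

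I do not anticipate a real obstacle: the two points that need care are checking that a suitable $s$ (resp.\ $t$) always exists in $\{0,\dots,n\}$, which is exactly where the hypothesis $n\ge 2$ enters, and verifying that the per-block contributions to the excess of each $b_j$ genuinely sum to a multiple of $3$; both are immediate once the bookkeeping is set up. An equivalent alternative would be to phrase this as an induction on $m$, using the two subroutines as the inductive step (peeling off two, respectively three, vertices of $A$) and $K_{2,n}$, $K_{3,n}$ as base cases.
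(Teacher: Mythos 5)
Your construction is correct: the residue computation ($\text{excess}=2r-d$ at a vertex of degree $d$ and out-degree $r$), the existence of $s,t\in\{0,\dots,n\}$ with $2s\equiv 2t\equiv n\pmod 3$ (using $n\ge 2$), the per-block checks for the $K_{2,n}$ and $K_{3,n}$ subroutines, and the partition of the $m$-side into pairs (plus one triple when $m$ is odd, which needs exactly $m\ge 2$) all go through, and the edge-disjoint union of modulo-$3$-orientations is indeed a modulo-$3$-orientation. (One cosmetic point: the lemma asks only for a modulo-$3$-orientation, so your final appeal to Lemma~\ref{mod3or} is superfluous, though harmless.) Your route differs from the paper's in execution: the paper argues by induction on $m+n$, splitting the larger partite set into two parts of size at least $2$ and bottoming out in the three small cases $K_{2,2}$, $K_{2,3}$, $K_{3,3}$, which are verified by explicit pictures (Figure~\ref{fig:kmnflows}); you instead give a one-shot decomposition into blocks of size $2$ or $3$ and orient each block $K_{2,n}$, $K_{3,n}$ in closed form via the out-degree residue argument. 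Both proofs rest on the same gluing principle (partition one side, combine modulo-$3$-orientations of the resulting complete bipartite pieces), but yours trades the induction and the figure-checked base cases for an explicit, fully self-contained construction, while the paper's version keeps the arithmetic to a minimum at the cost of relying on case checking for the small graphs.
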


\begin{figure}[ht]
	\centering
	\includegraphics[scale=1]{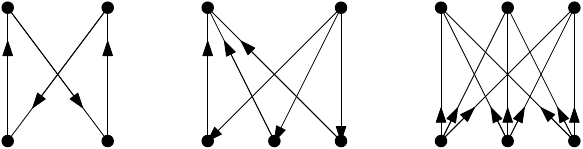}
	\caption{Modulo-$3$-orientations for $K_{m,n}$ with $2 \le n \le m \le 3$.}\label{fig:kmnflows}
\end{figure}

\begin{proposition}
For every \bothV{$n \ge 4$}{$n \ge 7$}, the graph \bothV{$K_{3,n}^+$}{$K_{3,n-3}^+$} is \bothV{facially}{universally cell} $3$-colorable.
\end{proposition}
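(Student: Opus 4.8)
Our plan is to follow the strategy behind Theorem~\ref{3flowsuff}. Let $\Gamma$ be a drawing of $K_{3,n}^+$; to produce a face-$3$-coloring it suffices, by Theorems~\ref{thm:flowcoloringduality} and~\ref{mod3or}, to exhibit a modulo-$3$-orientation of the planarization $G_\text{isc}(\Gamma)$. Since $K_{3,n}^+$ itself is \emph{not} $3$-flowable, we cannot simply lift one of its orientations along the trails of $\Gamma$ as in Theorem~\ref{3flowsuff}; instead we use a single crossing of $\Gamma$ to ``re-route'' the trails and lift a modulo-$3$-orientation of a carefully chosen auxiliary graph.

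By Proposition~\ref{prop:topssuffice} we may assume that $\Gamma$ is a good drawing. Denote the partite sets of $K_{3,n}$ by $\{a_1,a_2,a_3\}$ and $\{b_1,\dots,b_n\}$ and let $a_1a_2$ be the extra edge. As $K_{3,n}^+\supseteq K_{3,3}$ is non-planar, $\Gamma$ has at least one crossing, and in a good drawing every crossing is between two non-adjacent edges; since $a_1a_2$ is adjacent to every edge incident with $a_1$ or $a_2$, the only non-adjacent pairs are $\{a_1a_2,\,a_3b_j\}$ and $\{a_ib_j,\,a_{i'}b_{j'}\}$ with $i\ne i'$ and $j\ne j'$. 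Fix one crossing point $x$, with the two edges $e,e'$ of $K_{3,n}^+$ crossing there. In $G_\text{isc}(\Gamma)$ each edge of $K_{3,n}^+$ gives rise to a path between its endpoints running through intersection vertices, and because $e$ and $e'$ cross only at $x$ these two paths meet exactly at $x$. The four half-edges of $e$ and $e'$ alternate around $x$, so we may re-pair them in one of the two non-original ways: if $\{e,e'\}=\{a_1a_2,a_3b_j\}$ we split $x$ into a path from $a_1$ to $b_j$ and a path from $a_2$ to $a_3$, and if $\{e,e'\}=\{a_ib_j,a_{i'}b_{j'}\}$ we split $x$ into a path from $a_i$ to $a_{i'}$ and a path from $b_j$ to $b_{j'}$. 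Keeping the standard path for every other edge, the edge set of $G_\text{isc}(\Gamma)$ is now partitioned into simple paths indexed by the edges of the auxiliary multi-graph $G''$ obtained from $K_{3,n}^+$ by deleting $e,e'$ and adding the two new edges (which may be parallel to existing ones): $G''=K_{3,n}-a_3b_j+a_1b_j+a_2a_3$ in the first case, and $G''=K_{3,n}^+-a_ib_j-a_{i'}b_{j'}+a_ia_{i'}+b_jb_{j'}$ in the second. Exactly as in the proof of Theorem~\ref{3flowsuff}, orienting each of these paths consistently with a modulo-$3$-orientation of $G''$ yields a modulo-$3$-orientation of $G_\text{isc}(\Gamma)$: every normal vertex has the same degree in $G''$ as in $G_\text{isc}(\Gamma)$ and hence inherits an excess divisible by $3$, while every intersection vertex --- including $x$ --- is merely traversed by the paths and so has excess $0$.

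It remains to check that $G''$ is $3$-flowable, i.e.\ (Lemma~\ref{mod3or}) admits a modulo-$3$-orientation, and this is where the hypothesis $n\ge 4$ enters. In each case one can write such an orientation down explicitly (for instance by starting from a modulo-$3$-orientation of a large $K_{3,m}$-subgraph, $m\ge 2$, provided by Lemma~\ref{lem:compbip}, and adjusting the few remaining edges by hand): every $b_k$ has degree $3$ in $G''$, hence must be a source or a sink, which essentially determines the orientation of all edges at $a_1,a_2,a_3$; one then orients the one or two ``$a$--$a$'' edges and the possible ``$b$--$b$'' edge directly, using the remaining freedom --- how many of the at least $n-2\ge 2$ unconstrained $b_k$'s are sources --- to force the excess at $a_1$, $a_2$ and $a_3$ to be divisible by $3$. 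That number of sources must lie in a prescribed residue class modulo $3$, and $\{0,1,\dots,n-2\}$ meets every residue class exactly when $n\ge4$; for $n=3$ the relevant residue can be missed, which is consistent with $K_{3,3}^+$ not being facially $3$-colorable.

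The main obstacle is precisely this last step: the explicit construction of a modulo-$3$-orientation of $G''$, carried out across the finitely many shapes that $G''$ can take --- in particular distinguishing whether the new ``$a$--$a$'' edge is incident with $a_3$ or not --- is where the actual case analysis lies, whereas the path re-routing described above is essentially bookkeeping once the lifting lemma behind Theorem~\ref{3flowsuff} is in place.
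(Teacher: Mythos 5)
Your proof is correct, and it reaches the goal by a route that differs in detail from the paper's. Both arguments rest on the same idea --- exploit one unavoidable crossing of a good drawing to repair the failure of $3$-flowability and then lift a modulo-$3$-orientation to the planarization --- but the auxiliary graph and the flowability check differ. The paper chooses the crossing inside the non-planar sub-drawing of $K_{3,n}$, so both crossing edges run between $\{x_1,x_2,x_3\}$ and $B$, plants a degree-$4$ vertex $u$ at the crossing, and proves $3$-flowability of the resulting graph $H$ by gluing an explicitly drawn orientation of the six-vertex piece $H[\{x_1,x_2,x_3,u,v_1,v_2\}]$ to a modulo-$3$-orientation of $K_{3,n-2}$ supplied by Lemma~\ref{lem:compbip}; facial $3$-colorability of $\Gamma$ then follows by applying Theorem~\ref{3flowsuff} to $H$, whose drawing has the same cell decomposition as $\Gamma$. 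You instead allow the chosen crossing to involve the extra edge $a_1a_2$ (a case the paper sidesteps by its choice of crossing), re-pair the four half-edges at the crossing to obtain the two-edge-swapped multigraph $G''$, and lift a modulo-$3$-orientation of $G''$ along the re-routed trails directly into $G_\text{isc}(\Gamma)$; this lifting is a sound re-run of the proof of Theorem~\ref{3flowsuff}, and you correctly note that the crossing point is traversed once by each of the two new trails, hence has excess $0$. Your $3$-flowability check for $G''$ is only sketched, but the mechanism is right and does go through: every $b_k$ has degree $3$ in $G''$ and so must be a source or a sink, the $b$--$b$ edge (when present) is forced by the source/sink choice at its two endpoints, and for a suitable orientation of the one or two $a$--$a$ edges the conditions $\text{exc}(a_i)\equiv 0 \pmod 3$ reduce to requiring the number of sources among the at least $n-2$ unconstrained $b_k$'s to lie in one prescribed residue class modulo $3$, which $\{0,\dots,n-2\}$ meets precisely because $n\ge 4$. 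The trade-off: the paper's crossing-vertex graph $H$ lets it quote Theorem~\ref{3flowsuff} as a black box and compress the case analysis into two small pictures, while your re-routing avoids introducing a degree-$4$ vertex and makes the role of $n\ge 4$ quite transparent, at the price of one additional crossing type and a somewhat longer (if routine) orientation count that a full write-up should carry out case by case.
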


\begin{proof}
Using Proposition~\ref{prop:topssuffice}, we only need to prove that every good drawing of \bothV{$K_{3,n}^+$}{$K_{3,n-3}^+$} has a \bothV{face-}{cell} $3$-coloring. So consider a given good drawing $\Gamma$ of \bothV{$K_{3,n}^+$}{$K_{3,n-3}^+$}. Let us denote the three vertices in the smaller partite set of \bothV{$K_{3,n} \subseteq K_{3,n}^+$}{$K_{3,n-3} \subseteq K_{3,n-3}^+$} by $x_1, x_2, x_3$, and let \bothV{$x_1x_2 \in E(K_{3,n}^+)$}{$x_1x_2 \in E(K_{3,n-3}^+)$} \bothV{be the new edge added}{be the edge added} to \bothV{$K_{3,n}$}{$K_{3,n-3}$}. Let $B$ denote the second partite set of size $n-3$. Since \bothV{$K_{3,n}$}{$K_{3,n-3}$} is a non-planar graph, {at least }two non-adjacent edges with endpoints in $A\coloneqq\{x_1,x_2,x_3\}$ and $B$ must cross in the drawing $\Gamma$. Let $e_1=u_1v_1 \neq e_2=u_2v_2$ be two such crossing edges, where $u_1 \neq u_2 \in A, v_1 \neq v_2 \in B$. Let $H$ be the auxiliary abstract graph obtained from \bothV{$K_{3,n}^+$}{$K_{3,n-3}^+$} by deleting the edges $e_1$ and $e_2$, adding a new vertex $u \notin A \cup B$ to the vertex set and making $u$ adjacent to the original endpoints $u_1,v_1,u_2,v_2$ of $e_1$ and $e_2$. In the following, our goal is to show that $H$ admits a modulo-$3$-orientation. 
To do so, we distinguish between two cases depending on the adjacency of $u_1$ and $u_2$.

\textbf{Case 1.} $u_1$ and $u_2$ are adjacent. Since $x_1x_2$ is the only edge in \bothV{$K_{3,n}^+$}{$K_{3,n-3}^+$} between vertices in~$A$, \bothV{possibly after relabelling we may assume}{we may assume (if necessary by relabeling)} that $u_i=x_i, i=1,2$. We will now construct two modulo-$3$-orientations $D_1$ and $D_2$ of disjoint subgraphs $H_1$ and $H_2$ of $H$ which partition the edges of $H$. {The graph }$H_1$ is the induced subgraph $H[\{x_1,x_2,x_3,u,v_1,v_2\}]$, and \mbox{$H_2\coloneqq H-E(H_1)$} is isomorphic to the disjoint union of \bothV{$K_{3,n-2}$}{$K_{3,n-5}$} with the three isolated vertices $u, v_1, v_2$. The orientation $D_1$ of $H_1$ is depicted in \bothV{Figure}{Fig.}~\ref{caseorientations}, while $D_2$ is chosen as a modulo-$3$-orientation of~\bothV{$K_{3,n-2}$}{$K_{3,n-5}$}, whose existence is guaranteed by Lemma~\ref{lem:compbip} (here we use that \bothV{$n-2 \ge 4-2=2$}{$n-5 \ge 7-5=2$}). It is apparent that the arc-disjoint union of two modulo-$3$-orientations still defines a modulo-$3$-orientation. Hence, $D\coloneqq D_1 \cup D_2$ is a modulo-$3$-orientation of $H=H_1 \cup H_2$, and this concludes Case 1.

\begin{figure}[ht]
\centering
\includegraphics[scale=0.7]{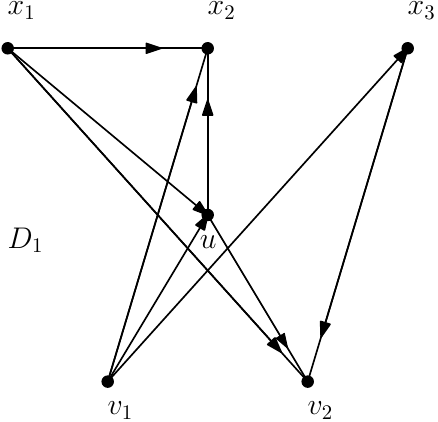} \hspace{50pt} \includegraphics[scale=0.7]{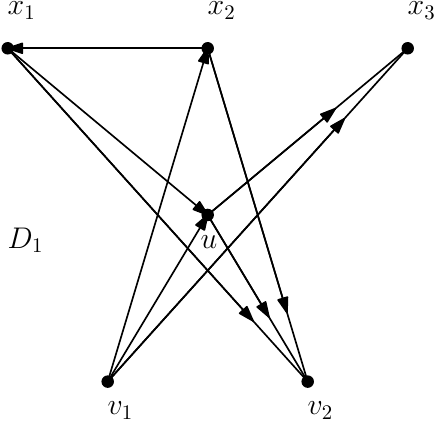}
\caption{The orientation $D_1$ of $H_1$, on the left for Case 1 and on the right for Case 2.}\label{caseorientations}
\end{figure}

\textbf{Case 2.} $u_1$ and $u_2$ are non-adjacent. \bothV{Possibly after relabelling, we may assume}{We may assume (if necessary by relabeling)} that $u_1=x_1$ and $u_2=x_3$. Let us again consider a partition of $H$ into two edge-disjoint subgraphs $H_1$ and $H_2$, where $H_1$ is the induced subgraph $H[\{x_1,x_2,x_3,u,v_1,v_2\}]$, and $H_2\coloneqq H-E(H_1)$ is the disjoint union of a \bothV{$K_{3,n-2}$}{$K_{3,n-5}$} with the three isolated vertices $u,v_1$ and $v_2$. Let $D_1$ be the orientation of $H_1$ depicted in {Fig.}~\ref{caseorientations}. We have $\text{exc}_{D_1}(v_1) \equiv \text{exc}_{D_1}(v_2) \equiv \text{exc}_{D_1}(u) \equiv 0 \text{ (mod }3)$, but $\text{exc}_{D_1}(x_i) \equiv 1 \text{ (mod }3)$ for $i=1,2,3$.

\begin{figure}[ht]
\centering
\includegraphics[scale=0.6]{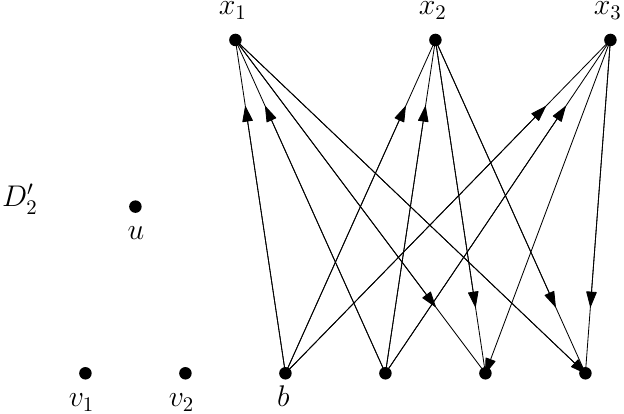} \hspace{10pt} \includegraphics[scale=0.6]{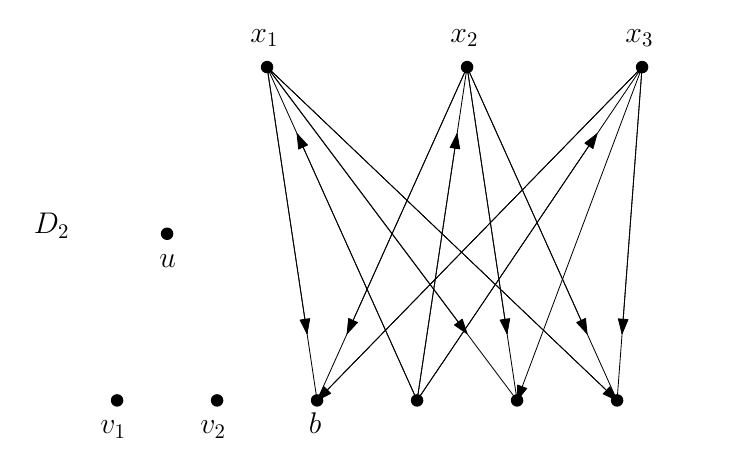}
\caption{A modulo $3$-orientation $D_2'$ of $H_2$ and the orientation $D_2$ obtained from it, exemplified in the case $n=9$.}\label{D2s}
\end{figure}

Further let $D_2$ be an orientation of $H_2$ such that $\text{exc}_{D_2}(v) \equiv 0 \text{ (mod }3)$ for every $v \in B$ but $\text{exc}_{D_2}(x_i) \equiv 2 \text{ (mod }3)$ for every $i \in \{1,2,3\}$.
Such an orientation exists, which can be seen as follows {(compare the example in Fig.~\ref{D2s})}: Take a modulo-$3$-orientation $D_2'$ of $H_2$ (which exists as guaranteed by Lemma~\ref{lem:compbip} since \bothV{$n-2 \ge 2$}{$n-5 \ge 2$}). Let $b \in B \setminus \{v_1,v_2\}$ be chosen arbitrarily. Since $b$ has degree $3$ in $H_2$ with neighbors $x_1,x_2,x_3$, it is either a source or a sink in $D_2'$. Hence, possibly after reversing all arcs in $D_2'$ we may assume $(b,x_i) \in A(D_2'), i=1,2,3$. We can now define $D_2$ as the orientation of $H_2$ obtained from \bothV{$D_1'$}{$D_2'$} by reversing the three arcs incident to $b$. Still, the excess of every vertex in $B$ with respect to $D_2$ is divisible by $3$, while every $x_i$ loses an in-arc and gains an out-arc, which means that it has excess $2$ modulo $3$ in $D_2$. This shows that $D_2$ has the required properties.

We now finally define an orientation $D$ of $H$ as the disjoint union of $D_1$ and $D_2$. It is now clear that we have $\text{exc}_D(x) \equiv \text{exc}_{D_1}(x)+\text{exc}_{D_2}(x) \equiv 0+0=0 \text{ (mod }3)$ for every vertex $v \in V(H) \setminus A$ and $\text{exc}_D(x_i)=\text{exc}_{D_1}(x_i)+\text{exc}_{D_2}(x_i) \equiv 1+2 \equiv 0 \text{ (mod }3)$ for $i=1,2,3$. This proves that $D$ is a modulo-$3$-orientation of $H$ and, thereby, concludes Case 2.

It remains to be seen how to obtain a \bothV{face-}{cell} $3$-coloring of $\Gamma$ from the modulo-$3$-orientation $D$ of $H$. Indeed, since $e_1$ and $e_2$ cross in the drawing $\Gamma$, by placing the vertex $u$ at (an arbitrarily chosen) crossing of $e_1$ and $e_2$, we obtain a drawing $\Gamma'$ of $H$, whose induced cell decomposition of $\mathbb{R}^2$ is combinatorially equivalent to the one of $\Gamma$. Since $H$ is $3$-flowable{ by Lemma~\ref{mod3or}}, we can apply Theorem~\ref{thm:3flowsuff} to $H$ in order to conclude that $\Gamma'$, and hence $\Gamma$ as well, admits a \bothV{face-}{cell }$3$-coloring. Since $\Gamma$ was chosen as an arbitrary {good }drawing of \bothV{$K_{3,n}^+$}{$K_{3,n-3}^+$}, this shows that \bothV{$K_{3,n}^+$}{$K_{3,n-3}^+$} is \bothV{facially}{universally cell} $3$-colorable, as claimed.
\end{proof}

\section{Towards Characterizing \bothV{Facially}{Universal\noldV{ly} Cell} 3-Colorab\nbothV{le Graphs}{ility}}\label{sec:mincex}
In this section, we study \bothV{facially}{universally cell} $3$-colorable graphs in more detail. We first prepare the proofs of Theorem~\ref{scubic} and Theorem~\ref{K33free}, by Proposition~\ref{subcontclosed}, which is crucial to both proofs. We further discuss Conjecture~\ref{main} and derive some properties a smallest counterexample to this conjecture must have (Theorem~\ref{thm:mincex}).
The following {proposition }shows that the class of \bothV{facially}{universally cell} $3$-colorable graphs is closed with respect to taking subcontractions. 

\begin{proposition}\label{subcontclosed}
Let $G$ be a \bothV{facially}{universally cell} $3$-colorable graph, and let $X \subseteq V(G)$. Then the graph $G/X$ is \bothV{facially}{universally cell} $3$-colorable as well.
\end{proposition}

\begin{proof}
Since the graph $G/X$ can be obtained from $G$ by repeatedly identifying pairs of vertices, it is sufficient to prove the claim in the case $|X|=2$. So let $X=\{u,v\}$ for some $u, v \in V(G)$ with $u \neq v$. Let $\Gamma_{uv}$ be any given drawing of the graph $G/\{u,v\}$. Let $x_{uv}$ denote the vertex in~$G/\{u,v\}$ obtained by identifying $u$ and $v$. Let $p \in \mathbb{R}^2$ be the position of $x_{uv}$ in the drawing $\Gamma_{uv}$. Let $\varepsilon>0$ be small enough such that the closed \bothV{ball}{\nbothV{disk}{ball}} $B_\varepsilon(p)$ with radius $\varepsilon$ around $x_{uv}$ contains no other vertices or crossing points, and such that every edge incident to $x_{uv}$ in the drawing $\Gamma_{uv}$ intersects the boundary of $B_\varepsilon(p)$ exactly once. Let $e_1,e_2,\ldots,e_r$ denote the incident edges of $x_{uv}$ in $G/\{u,v\}$ which arise from the edges in $G$ incident to $u$, while $e_{r+1},\ldots,e_\ell$ denote the edges arising from the edges incident to $v$ in $G$. Let $p_i$, $i=1,\ldots,\ell$, denote the intersection point of the curve $\gamma(e_i) \subseteq \mathbb{R}^2$ representing $e_i$ with the boundary of $B_\varepsilon(p)$. 
We now locally modify the drawing $\Gamma_{uv}$ within $B_{\varepsilon}(p)$ to obtain a drawing $\Gamma$ of $G$ as follows (see also {Fig.}~\ref{redrawing}): first, we delete all \bothV{features of}{edge\noldV{-}parts of} the drawing $\Gamma_{uv}$ contained in $B_\varepsilon(p)$ {and $x_{uv}$}. Next we place the vertices $u$ and $v$ at two distinct positions in the interior of $B_\varepsilon(p)$. We connect (with straight-line segments) $u$ to the points $p_1,\ldots,p_r$ and $v$ to the points $p_{r+1},\ldots,p_\ell$ possibly introducing crossings and draw all edges between $u$ and $v$ (including possible loops) within $B_\varepsilon(p)$ introducing no further crossings. We now join $\gamma(e_i) \setminus B_\varepsilon(p)$ with the straight-line segment from $p_i$ to $u$ or $v$ for $i=1,\ldots,\ell$ and thereby obtain the drawing $\Gamma$ of $G$. Since $G$ is \bothV{facially}{universally cell} $3$-colorable, $\Gamma$ admits a \bothV{face-}{cell }$3$-coloring{, call it $c$}.
%, which by Theorem~\ref{thm:flowcoloringduality} yields a nowhere-zero $3$-flow $(D,f)$ of $G_\text{isc}(\Gamma)$. Let $I \subseteq V(G_\text{isc}(\Gamma))$ contain $u, v$ as well as all the crossing vertices corresponding to crossings contained in the ball $B_\varepsilon(p)$. From the above construction of $\Gamma$, it follows that $G_\text{isc}(\Gamma_{uv})$ is isomorphic to $G_\text{isc}(\Gamma)/I$. By restricting the flow $(D,f)$ to the edges that are not contracted in $G_\text{isc}(\Gamma)/I$, we see that this graph admits a nowhere-zero $3$-flow as well. Hence, another application of Theorem~\ref{thm:flowcoloringduality} yields that $G_\text{isc}(\Gamma_{uv})$ and thus the drawing $\Gamma_{uv}$ admits a face-$3$-coloring. 
Now we can \bothV{face-$3$-color $\Gamma_{uv}$ easily}{color the cells of $\Gamma_{uv}$ with three colors, as follows}. \bothV{Outside $B_\varepsilon(p)$, the coloring is the same as for $\Gamma$}{If a cell $f$ of~$\Gamma_{uv}$ is disjoint from $B_\varepsilon(p)$, then it also exists as a cell in $\Gamma$ and we assign to it the same color~$c(f)$ as in $\Gamma$}.
 \bothV{Since every face of $\Gamma_{uv}$ has a part outside $B_\varepsilon(p)$ and all adjacencies inside $B_\varepsilon(p)$ are realized outside of it just next to it, this coloring is a valid face-3-coloring of $\Gamma_{uv}$}{If a cell $f$ of $\Gamma_{uv}$ intersects the ball $B_\varepsilon(p)$, then by choice of $\varepsilon$, $f\setminus B_\varepsilon(p) \neq \emptyset$, since the edges in $\Gamma_{uv}$ incident to $x_{uv}$ do not intersect inside the ball $B_\varepsilon(p)$.
  Since $\Gamma$ and $\Gamma_{uv}$ are the same outside of $B_\varepsilon(p)$, there exists a unique cell $f'$ of $\Gamma$ such that $f\setminus B_\varepsilon(p) \subseteq f'$, and we assign to $f$ the color $c(f')$ that was assigned to $f'$ in the cell $3$-coloring $c$ of $\Gamma$. Note that whenever two cells $f_1,f_2$ in $\Gamma_{uv}$ intersecting $B_\varepsilon(p)$ are adjacent, then their boundaries contain a common edge incident to $x_{uv}$, and therefore also the corresponding cells $f_1',f_2'$ in $\Gamma$ touch. 
  Hence, the so-defined $3$-coloring of the cells of $\Gamma_{uv}$ is a proper cell $3$-coloring}. Since $\Gamma_{uv}$ was chosen as an arbitrary drawing of $G/\{u,v\}$, this proves that $G/\{u,v\}$ is \bothV{facially}{universally cell} $3$-colorable.
\end{proof}

\begin{figure}[ht]
\centering
\includegraphics[scale=0.7]{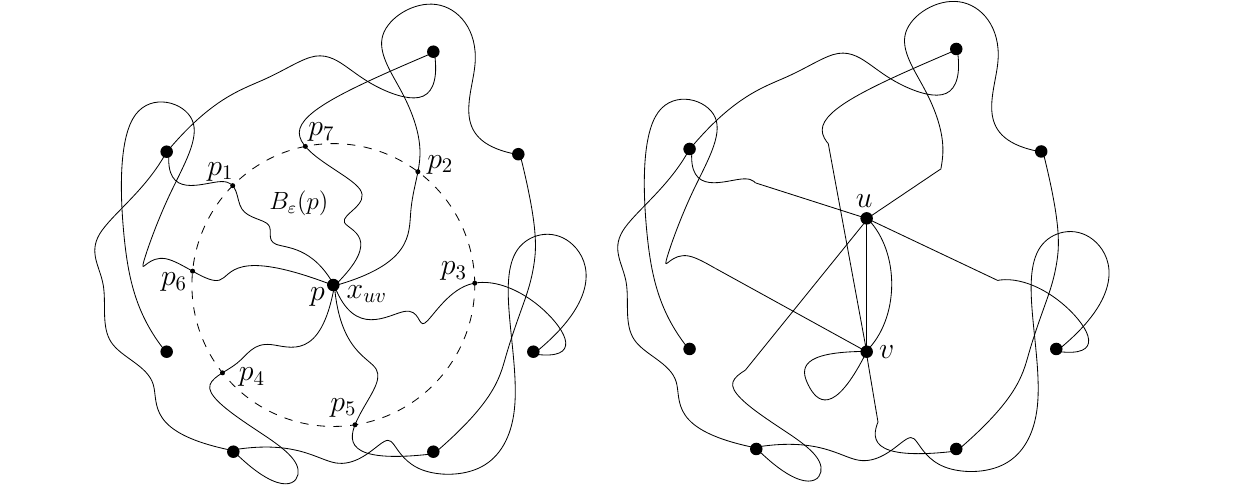}
\caption{Illustration of the construction of the drawing $\Gamma$ (right) from the drawing $\Gamma_{uv}$ (left).}\label{redrawing}
\end{figure}

{For later use, we also include the following statement, which belongs to the folklore of nowhere-zero-flow theory. It states that also the $k$-flowable graphs are closed under subcontractions for every fixed $k \in \mathbb{N}$. 

\begin{lemma}\label{lemma:subcontflows}
If $G$ is a $k$-flowable graph, then also $G/X$ is $k$-flowable for any $X\subseteq V(G)$. In particular, every subcontraction of a $k$-flowable graph is $k$-flowable.
\end{lemma}
\begin{proof}
For a $k$-flowable graph $G$, suppose that $(D,f)$ describes a nowhere-zero $k$-flow of $D$, where $D$ is an orientation of $G$ and \mbox{$f\colon A(D) \rightarrow \mathbb{Z}_k\setminus\{0\}$} is a group-valued flow on $D$. Let $v_X$ be the vertex such that \mbox{$V(G/X)=(V(G)\setminus X) \cup \{v_X\}$}. Next define a pair $(D',f')$ consisting of an orientation of $G/X$ and a mapping $f'\colon A(D') \rightarrow \mathbb{Z}_k\setminus \{0\}$ as follows: Every edge of $G/X$ not incident to $v_X$ is oriented in the same way as its corresponding edge in $D$. Further, for every edge $e=wv_X \in E(G/X)$ coming from an edge $wv \in E(G)$ with $w \in V(G)\setminus X, v \in X$, we orient~$e$ from $w$ towards $v_X$ if $(w,v) \in A(D)$, and from $v_X$ to $w$ if $(v,w) \in A(D)$. Finally, we define $f'\colon A(D') \rightarrow \mathbb{Z}_k\setminus \{0\}$ by assigning the value $f'(e')\coloneqq f(e)$ to every edge $e' \in A(D')$ with corresponding edge $e$ in $D$. We now claim that $(D',f')$ is a nowhere-zero $k$-flow for $G/X$. Indeed, it is readily verified using the above definitions that for every fixed $v \in V(D') \setminus \{v_X\}$, we have
$$\sum_{e'=(w',v) \in A(D')}{f'(e')}=\sum_{e=(w,v) \in A(D)}{f(e)}=\sum_{e=(v,w) \in A(D)}{f(e)}=\sum_{e'=(v,w') \in A(D')}{f'(e')},$$ as required by Kirchhoffs' law. For the vertex $v_X$, we can observe that
$$\sum_{e'=(w',v_X) \in A(D')}{f'(e')}=\sum_{e \in A(D)
\cap ((V(D)\setminus X) \times X)}{f(e)}$$
and symmetrically
$$\sum_{e'=(v_X,w') \in A(D')}{f'(e')}=\sum_{e \in A(D)
\cap (X \times (V(D)\setminus X))}{f(e)}.$$
Hence, to verify Kirchhoff's law at $v_X$, it suffices to show that the two right-hand sides in the above equations coincides. Using that $f$ satisfies Kirchhoff's law at all $v \in X$, we can obtain this equality as follows:
$$\sum_{e \in A(D)
\cap ((V(D)\setminus X) \times X)}{f(e)}=\left(\sum_{v \in X}\sum_{e=(w,v) \in A(D)}{f(e)}\right)-\sum_{e \in A(D) \cap (X \times X)}{f(e)}$$
$$=\left(\sum_{v \in X}\sum_{e=(v,w) \in A(D)}{f(e)}\right)-\sum_{e \in A(D) \cap (X \times X)}{f(e)}=\sum_{e \in A(D)
\cap (X \times (V(D)\setminus X))}{f(e)}.$$
\end{proof}
}

With \oldV{the }Proposition~\ref{subcontclosed} {and Lemma~\ref{lemma:subcontflows}} as {tools} in hand, we are ready for the proofs of Theorem~\ref{scubic} and Theorem~\ref{K33free}. {Before showing the proof of Theorem~\ref{scubic}, we show a statement of independent interest in Proposition~\ref{prop:oddwheel}, which will then be used in the proof. An \emph{odd wheel} is a simple graph obtained from an odd cycle by adding a dominating vertex.
\begin{proposition}\label{prop:oddwheel}
All odd wheels are not universally cell $3$-colorable.
\end{proposition}
\begin{proof}
Let $G$ be an odd wheel and consider the canonical planar drawing of $G$ in which the dominating vertex is enclosed by the cycle spanned by the remainder of the vertices. This drawing does not admit a cell $3$-coloring: Its planar dual graph is also an odd wheel (on the same number of vertices) and hence has chromatic number $4$, since every odd cycle has chromatic number $3$ and since the addition of a dominating vertex to a graph raises its chromatic number by $1$.
\end{proof}
}

\restatecubic*
\begin{proof}
We prove the theorem by showing that, for a graph $G$ with maximum degree at most $3$, the following three statements are equivalent. \oldV{An \emph{odd wheel} is a simple graph obtained from an odd cycle by adding a dominating vertex.}
\begin{enumerate}[(i)]
	\item $G$ is \bothV{facially}{universally cell} $3$-colorable.
	\item $G$ is bridgeless and \bothV{has no odd wheel as a subcontraction}{has no subcontraction isomorphic to an odd wheel}.
	\item $G$ is $3$-flowable.
\end{enumerate}

\textbf{(iii) $\Rightarrow$ (i)} This follows from Theorem~\ref{thm:3flowsuff}.

\textbf{(i) $\Rightarrow$ (ii)} Let $G$ be a graph of maximum degree $3$ which is \bothV{facially}{universally cell} $3$-colorable. $G$ must be bridgeless, since otherwise if $e \in E(G)$ was a bridge of $G$, we could draw $G$ in a way such that $e$ is adjacent on both sides to the outer \bothV{face}{cell}. This, however, means that the outer \bothV{face}{cell} \bothV{has a self-touching}{is adjacent to itself}, contradicting the fact that this drawing has a \bothV{face-}{cell }$3$-coloring. Further, by Proposition~\ref{subcontclosed} every subcontraction of $G$ is \bothV{facially}{universally cell} $3$-colorable. However, \bothV{all odd wheels are self-dual planar graphs of chromatic number $4$, and hence they are not facially $3$-colorable}{by Proposition~\ref{prop:oddwheel} all odd wheels are not universally cell $3$-colorable}. This shows that no subcontraction of $G$ is isomorphic to an odd wheel.

\textbf{(ii) $\Rightarrow$ (iii)} Suppose towards a contradiction that this implication does not hold, \bothV{i.e.}{that is}, there exists a bridgeless graph~$G$ of maximum degree $3$ without an odd wheel as a subcontraction, but~$G$ is not $3$-flowable{, hence by Lemma~\ref{mod3or}, $G$ has no modulo-$3$-orientation}. Let us choose $G$ such that it minimizes $|V(G)|$ with respect to these conditions. Clearly, we must have $|V(G)| \ge 3$.

We claim that $G$ must be connected. Indeed, suppose towards a contradiction that $G$ is the disjoint union of two subgraphs $G_1, G_2$. Clearly, both $G_1$ and $G_2$ are bridgeless graphs of maximum degree at most $3$ which are isomorphic to subcontractions of $G$ (indeed, for $i \in \{1,2\}$ the graph $G/X$ is isomorphic to $G_i$, where $X\coloneqq V(G_{3-i}) \cup \{v\}$ for some vertex $v \in V(G_i)$). This implies that neither $G_1$ nor $G_2$ have an odd wheel as a subcontraction, since such an odd wheel would also be a subcontraction of $G$. Since $|V(G_i)|<|V(G)|$ for $i=1,2$, the minimality of $G$ implies that $G_1$ and $G_2$ admit modulo-$3$-orientations $D_1$ and $D_2$. However, now $D_1 \cup D_2$ defines a modulo-$3$-orientation of $G$, which contradicts our initial assumptions on $G$. Hence, $G$ is connected.

\begin{figure}[ht]
	\centering
	\includegraphics[scale=0.7]{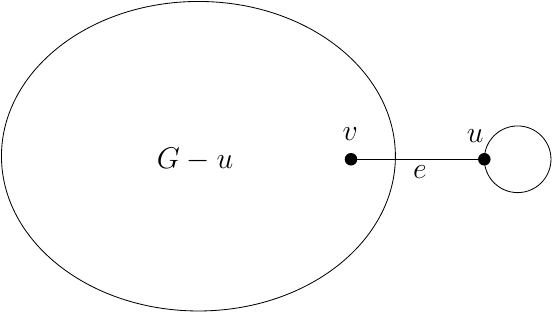}
	\caption{Illustration of the case in the proof when $G$ contains a loop at the vertex $u$.}\label{case1max3}
\end{figure}

\begin{figure}[ht]
	\centering
	\includegraphics[scale=0.7]{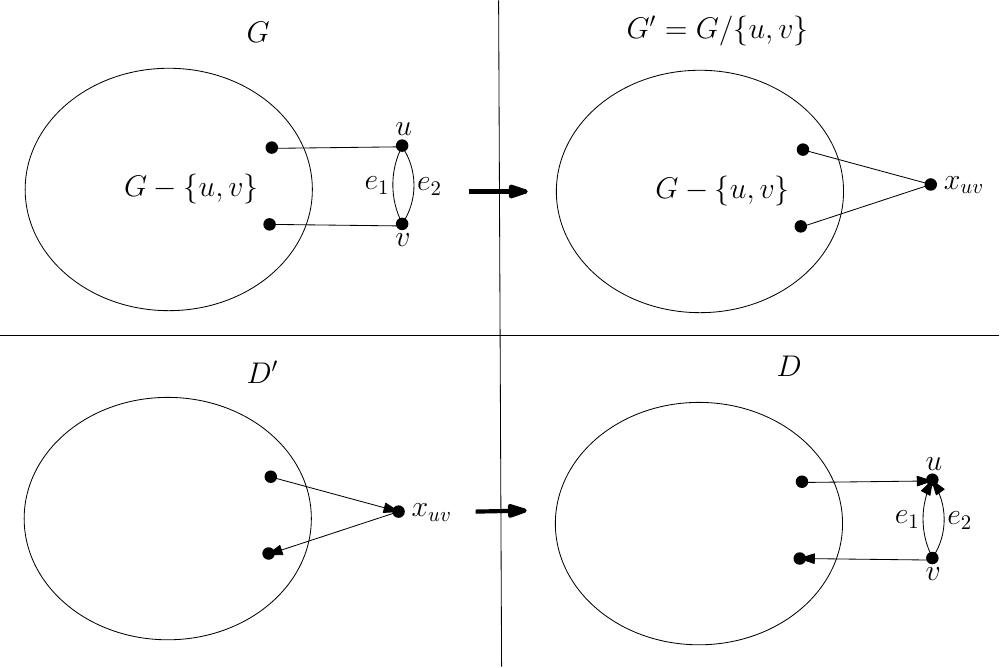}
	\caption{Illustration of the case in the proof when $G$ contains the parallel edges $e_1, e_2$. On the top, the construction of $G'$ from $G$ is illustrated, and at the bottom, the construction of $D$ from $D'$ is illustrated.}\label{case2max3}
\end{figure}

Next we claim that $G$ must be a simple graph. If not, then $G$ contains either a loop or two parallel edges between a pair of vertices. In the first case let $u \in V(G)$ be incident to a loop{ (see Fig.~\ref{case1max3})}. Since $|V(G)| \ge 2$, there must be another edge $e$ connecting $u$ with a neighbor $v \in V(G)\setminus \{u\}$. Since $u$ has degree at most $3$, the edge $e$ must be a bridge, contradicting the fact that $G$ is bridgeless. In the second case let $u \neq v \in V(G)$ be a pair of vertices such that there are distinct edges $e_1 \neq e_2 \in E(G)$ with endpoints $u$ and $v${ (see Fig.~\ref{case2max3})}. Since $G$ is 2-edge-connected, has maximum degree 3 and $|V(G)|\ge 3$, there are two edges connecting $u$ and $v$ to other vertices. 
In particular, there is no third parallel edge from $u$ to $v${.} Let us now consider the graph $G'\coloneqq G/\{u,v\}$. {Note that by definition, $G'$ is a subcontraction of $G$.} Since~$G$ is bridgeless, so is $G'$, and since $G'$ is a subcontraction of $G$, it does not have an odd wheel as subcontraction. Further, $G'$ has no vertex of degree more than $3$: identifying $u$ and $v$ leaves the degree of all vertices in $V(G)\setminus \{u,v\}$ unchanged, while the identification vertex $x_{uv} \in V(G')$ must have degree at most $2$. Since $|V(G')|<|V(G)|$, the minimality of $G$ implies that $G'$ has a modulo-$3$-orientation $D'$. By expanding $x_{uv}$ into $u$ and $v$ and keeping the orientations of edges in $D'$, we obtain an orientation $D^\ast$ of $G-\{e_1,e_2\}$ such that $\text{exc}_{D^\ast}(x)=\text{exc}_D(x) \equiv 0 \text{ (mod }3)$ for all $x \in V(G)\setminus \{u,v\}$, and $\text{exc}_{D^\ast}(u)+\text{exc}_{D^\ast}(v)=\text{exc}_{D'}(x_{uv}) \equiv 0 \text{ (mod }3)$. Let $D$ be defined as the orientation of $G$ obtained from $D^\ast$ by orienting the two parallel edges $e_1, e_2$ as follows: if $\text{exc}_{D^\ast}(u) \equiv \text{exc}_{D^\ast}(v) \equiv 0 \text{ (mod }3)$, then we orient $e_1$ from $u$ towards $v$ and $e_2$ from $v$ towards~$u$. If $\text{exc}_{D^\ast}(u) \equiv 1, \text{exc}_{D^\ast}(v) \equiv 2 \text{ (mod }3)$, then we orient both $e_1$ and $e_2$ from $u$ towards $v$. Finally, if $\text{exc}_{D^\ast}(u) \equiv 2, \text{exc}_{D^\ast}(v) \equiv 1 \text{ (mod }3)$, then we orient both $e_1$ and $e_2$ from $v$ towards $u$. In each case, the obtained orientation $D$ of $G$ defines a modulo-$3$-orientation of $G$, however $G$ is not $3$-flowable. This contradiction shows that our assumption that $G$ is not simple was wrong. Hence, we have established that $G$ is a simple graph of maximum degree at most $3$. 

We claim that in fact, $G$ must be a cubic graph. Suppose towards a contradiction that there was a vertex $v \in V(G)$ with only two incident edges $f_1$, $f_2$. Then the subcontraction $G/f_1$ of~$G$ is bridgeless and has maximum degree at most $3$. Hence, by the minimality of $G$ there exists a modulo-$3$-orientation $D'$ of $G/f_1$. Expanding this orientation to $G$ and orienting $f_1$ in such a way that $f_1$ and $f_2$ form a directed path of length two, we find a modulo-$3$-orientation $D$ of $G$, again contradicting that $G$ is not $3$-flowable. This shows that $G$ is cubic.

We finally claim that $G$ is bipartite. Suppose not, then there exists an induced cycle $C$ in~$G$ of odd length. Since every vertex in $V(C)$ has a unique third neighbor in $V(G)\setminus V(C)$, the subcontraction $G/(V(G) \setminus V(C))$ of $G$ is isomorphic to an odd wheel. This is a contradiction to our initial assumption that $G$ does not have an odd wheel as a subcontraction.

We summarize: $G$ is a simple, bipartite, and cubic graph. Let $V(G)=A \cup B$ be a bipartition of $G$. Consider the orientation of $G$ in which all edges are oriented from $A$ towards $B$. In this orientation, every vertex in $A$ has excess $3$, while every vertex in $B$ has excess $-3$. Hence, this is a modulo-$3$-orientation, contradicting that $G$ is not $3$-flowable. Thus, our initial assumption, namely that there exists a bridgeless graph $G$ of maximum degree at most $3$ without odd wheel as a subcontraction that is not $3$-flowable, was wrong. This proves (ii) $\Rightarrow$ (iii).
\end{proof}

The following corollary follows directly from the proof of Theorem \ref{scubic}:

\begin{corollary}
A cubic graph is \bothV{facially}{universally cell} 3-colorable if and only if it is bipartite.\label{cubic}
\end{corollary}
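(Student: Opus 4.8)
The plan is to read the corollary off of Theorem~\ref{scubic} together with Lemma~\ref{mod3or}; the two directions are essentially the two endpoints of the argument used to prove Theorem~\ref{scubic}, so I expect no new difficulty.

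For the implication ``bipartite $\Rightarrow$ facially $3$-colorable'' I would fix a bipartition $V(G)=A\cup B$ and orient every edge of $G$ from its endpoint in $A$ towards its endpoint in $B$. Since $G$ is cubic, every vertex of $A$ then has $d_D^+=3,\ d_D^-=0$ and every vertex of $B$ has $d_D^+=0,\ d_D^-=3$, so the excess at every vertex is $\pm 3$ and this orientation is a modulo-$3$-orientation. By Lemma~\ref{mod3or}, $G$ is $3$-flowable, and hence facially $3$-colorable by Theorem~\ref{3flowsuff}. This is precisely the content of the final paragraph of the proof of Theorem~\ref{scubic} together with the implication (iii)$\,\Rightarrow\,$(i) proved there.

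For the converse ``facially $3$-colorable $\Rightarrow$ bipartite'' I would first apply Theorem~\ref{scubic} to conclude that $G$ is $3$-flowable, and then Lemma~\ref{mod3or} to obtain a modulo-$3$-orientation $D$ of $G$. The key observation is that in a cubic graph such an orientation forces every vertex to be a source or a sink: at a vertex $v$ we have $d_D^+(v)+d_D^-(v)=3$ and $3\mid d_D^+(v)-d_D^-(v)$, and the only pairs satisfying both are $(d_D^+(v),d_D^-(v))\in\{(3,0),(0,3)\}$. Consequently every edge of $G$ joins a source to a sink, so the partition of $V(G)$ into sources and sinks is a bipartition and $G$ is bipartite.

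I do not anticipate a genuine obstacle. The only step that is more than a verbatim citation of earlier results is the elementary counting at a degree-$3$ vertex in the converse direction, and that is a one-line check. One could instead route the converse through Proposition~\ref{ident} and the fact, used inside the proof of Theorem~\ref{scubic}, that a non-bipartite cubic graph has an odd wheel as a subcontraction while odd wheels are not facially $3$-colorable; but the source/sink argument above is shorter and additionally handles parallel edges without a separate reduction to simple graphs.
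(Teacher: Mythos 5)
Your proof is correct. The forward direction (bipartite cubic $\Rightarrow$ facially $3$-colorable) is exactly the paper's: orient all edges from one side of the bipartition to the other, observe excess $\pm 3$ everywhere, and invoke Lemma~\ref{mod3or} and Theorem~\ref{3flowsuff}; this is literally the final paragraph of the proof of Theorem~\ref{scubic}. For the converse you take a mildly different route from the one the paper intends. The paper states that the corollary ``follows directly from the proof'' of Theorem~\ref{scubic}, i.e.\ the converse is meant to be read off the implication (i)$\Rightarrow$(ii) together with the step inside that proof showing that a non-bipartite cubic graph has an induced odd cycle whose complement contracts to an odd wheel, combined with the fact that odd wheels are not facially $3$-colorable. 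You instead use only the \emph{statement} of Theorem~\ref{scubic} (facially $3$-colorable $\Rightarrow$ $3$-flowable) plus the elementary observation that in a cubic graph a modulo-$3$-orientation forces $(d_D^+(v),d_D^-(v))\in\{(3,0),(0,3)\}$ at every vertex, so sources and sinks form a bipartition. Your version is shorter, avoids the odd-wheel/subcontraction machinery entirely, and handles multigraphs cleanly (parallel edges cause no trouble, and a loop at a cubic vertex is ruled out since it would force $d_D^+,d_D^-\ge 1$); the paper's route has the advantage of staying within the structural framework already developed in the proof of Theorem~\ref{scubic} and yields the additional structural fact that non-bipartite cubic graphs have odd-wheel subcontractions. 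Both arguments are valid.
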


Our next goal is to prove Theorem~\ref{K33free}. {Since the proof is a bit technical in parts, before jumping right into its details, let us give a general overview of the strategy we use in the following.

Recall that our goal is to show that any graph $G$ with no $K_{3,3}$-minor is universally cell $3$-colorable if and only if it is $3$-flowable. By Theorem~\ref{thm:3flowsuff}, we already know that the existence of a nowhere zero $3$-flow is sufficient for universal cell $3$-colorability (even for arbitrary graphs), so the essential part of the proof is to show that if a graph $G$ is $K_{3,3}$-minor-free and universally cell $3$-colorable, then it admits a nowhere zero $3$-flow. 
Since this implication does not hold true for general graphs by the examples from Section~\ref{sec:infinite}, our proof needs to make use of additional structure that $K_{3,3}$-minor free graph{s} have. Luckily, a precise structure theorem for the class of $K_{3,3}$-minor-free graphs was established in a classical result of Wagner~\cite{thomas,wagner}. Basically, the structure theorem tells us that a graph has no $K_{3,3}$-minor if and only if it can be built from a set of planar graphs, as well as a set of copies of $K_5$, by glueing these graphs together along small separations (of order two). More precisely,} 
given a pair $G^1, G^2$ of simple graphs such that $V(G^1) \cap V(G^2)$ induces \bothV{cliques of order $i$ in both}{a clique of order $i$ in each of}~$G^1$ and $G^2$, and such that $|V(G^1)|,|V(G^2)|>i$, the simple graph $G$ with $V(G)=V(G^1) \cup V(G^2)$ and $E(G)=E(G^1) \cup E(G^2)$ is called the \emph{proper $i$-sum} of~$G^1$ and $G^2$. A graph obtained from $G$ by deleting a subset (possibly all or none) of the edges in $E(G^1) \cap E(G^2)$ is said to be an \emph{$i$-sum} of $G^1$ and $G^2$.

\begin{theorem}[Wagner \cite{thomas,wagner}]\label{wagner}
A simple graph is $K_{3,3}$-minor-free if and only if it can be obtained from simple planar graphs and graphs isomorphic to $K_5$ by means of repeated $i$-sums, where $i \in \{0,1,2\}$.
\end{theorem}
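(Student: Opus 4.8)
The plan is to prove both implications separately, treating the forward (synthesis) direction as a short induction and the reverse (decomposition) direction as the substantial part, which I reduce to a structural statement about $3$-connected graphs. Throughout I would invoke two classical facts external to this paper: the Kuratowski--Wagner theorem (a simple graph is planar if and only if it has neither a $K_5$- nor a $K_{3,3}$-minor) and Menger's theorem.

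\textbf{The ``if'' direction.} Here I show that any graph built from planar graphs and copies of $K_5$ by $i$-sums with $i \le 2$ has no $K_{3,3}$-minor, by induction on the number of sum operations. The base cases hold because planar graphs have no $K_{3,3}$-minor (Kuratowski--Wagner, since $K_{3,3}$ is nonplanar and the class is minor-closed), and $K_5$ has no $K_{3,3}$-minor (it has only five vertices, while $K_{3,3}$ has six). For the inductive step I need the \emph{localization lemma}: if $H$ is $3$-connected and $H$ is a minor of an $i$-sum $G$ of $G^1$ and $G^2$ with $i \le 2$, then $H$ is a minor of $G^1$ or of $G^2$. As $K_{3,3}$ is $3$-connected, this lemma immediately propagates $K_{3,3}$-minor-freeness through each sum. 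To prove it, fix a model of $H$ with connected branch sets; since the separator $S = V(G^1)\cap V(G^2)$ has at most two vertices and forms a clique inside each $G^j$, the set $T$ of branch sets meeting $S$ has size at most $|S| \le 2$, and every other branch set lies entirely on one side. If both sides were used, $T$ would be a separator of order at most $2$ in the $3$-connected graph $H$, which is impossible; hence all branch sets can be rerouted into a single $G^j$ (detours through the other side are shortcut across the clique $S$), exhibiting $H$ as a minor of $G^j$.

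\textbf{Reduction of the ``only if'' direction to $3$-connected graphs.} Given a $K_{3,3}$-minor-free graph $G$, I decompose it along its separations of order at most $2$: splitting at cut vertices into blocks, then decomposing each $2$-connected block into its $3$-connected components (Tutte). This yields a family of \emph{torsos} (the pieces together with the virtual clique edges on the separators), each of which is a $3$-connected graph, a cycle, or a bond. Each torso is a minor of $G$, hence is itself $K_{3,3}$-minor-free, and $G$ is recovered from the torsos by $0$-, $1$-, and $2$-sums, deleting exactly the virtual edges absent from $G$, which is precisely the definition of an $i$-sum. Cycles and bonds are planar, so it remains only to understand the $3$-connected torsos; this is the crux.

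\textbf{The crux: a $3$-connected $K_{3,3}$-minor-free graph $G$ is planar or $K_5$.} If $G$ also has no $K_5$-minor, Kuratowski--Wagner makes it planar; so assume $G$ has a $K_5$-minor, and I claim $G \cong K_5$. Equivalently, I must prove that \emph{a $3$-connected graph with a $K_5$-minor and at least six vertices has a $K_{3,3}$-minor}. Choose a $K_5$-model $B_1,\dots,B_5$ minimizing the total number of covered vertices. In the case that every vertex of $G$ is covered, some $B_i$, say $B_1=\{u,w\}$, contains two vertices while $b_2,\dots,b_5$ are singletons spanning a $K_4$; the edges realizing $B_1 \sim B_j$ together with $3$-connectivity force $u$ and $w$ each to reach at least two of $b_2,\dots,b_5$, and a short counting argument shows the four $K_4$-vertices can be split into two pairs so that $\{u,\cdot,\cdot\}$ and $\{w,\cdot,\cdot\}$ form a $K_{3,3}$. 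In the remaining case a sixth vertex $z$ lies outside all branch sets; here I use Menger's theorem to obtain three internally disjoint paths from $z$ to three distinct branch sets and reroute them to split a branch set, again extracting a $K_{3,3}$-minor. This off-model rerouting is the main technical obstacle and the place where $3$-connectivity is used most delicately; once it is settled, the crux follows, and with it the reduction above and hence the full theorem.
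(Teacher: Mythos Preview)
The paper does not prove this theorem at all: it is quoted as Wagner's classical 1937 result and used as a black box in the proof of Lemma~\ref{K5structure}. There is therefore no ``paper's own proof'' to compare your proposal against.

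That said, your outline follows one of the standard routes to Wagner's theorem (closure of the class under small clique-sums, Tutte decomposition into torsos, and the $3$-connected crux). Two places in the crux are genuinely incomplete as written. First, in the ``all vertices covered'' case, vertex-minimality of the $K_5$-model does not by itself force four of the branch sets to be singletons; you only know that at least one branch set has size at least two, and the reduction to the specific configuration $|B_1|=2$, $|B_2|=\cdots=|B_5|=1$ needs an extra argument (or you must handle the general case directly). Second, in the ``off-model vertex $z$'' case, Menger gives three internally disjoint $z$--$V(\text{model})$ paths, but nothing guarantees they land in three \emph{distinct} branch sets; arranging this, and then actually extracting the $K_{3,3}$-minor from the resulting configuration, is exactly the delicate part you flag but do not carry out. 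Both gaps are fixable along the lines of textbook treatments (e.g.\ Diestel), but as stated the proposal is a sketch rather than a proof.
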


{Looking at Theorem~\ref{wagner}, it is apparent that it should ease our task of proving that a universally cell $3$-colorable graph $G$ with no $K_{3,3}$-minor is $3$-flowable: For \emph{both} building blocks of $K_{3,3}$-minor free graphs, namely for planar graphs and for copies of $K_5$, we already know that the equivalence of universal cell $3$-colorability and $3$-flowability holds (compare Theorems~\ref{thm:flowcoloringduality} and~\ref{thm:3flowsuff}, and note that~$K_5$ is $3$-flowable). Hence, the only obstacle in the proof will be to deal with larger graphs glued by copies of planar graphs and $K_5$, and to show that they do not bring additional harm. The main tool to handle this task will be Proposition~\ref{subcontclosed}: In the proof, we will consider (by way of contradiction) a smallest $K_{3,3}$-minor-free graph which is universally cell $3$-colorable, but not $3$-flowable. Then, Proposition~\ref{subcontclosed}, together with the minimality of $G$, assures that whenever we consider a non-empty subset of edges $E \subseteq E(G)$, the graph $G/E$ will still be universally cell $3$-colorable, and clearly, still $K_{3,3}$-minor free, and hence admit a nowhere zero $3$-flow. This nice property will then be used to show that by combining different nowhere zero $3$-flows of different subcontractions of $G$, we can actually find a nowhere zero $3$-flow of $G$ itself (which then concludes the proof). In order for this last step of the proof to work out, it will be essential to contract particularly selected edge\noldV{-}subsets in a controlled way, such that the partial flows obtained on the contracted graphs fit together nicely on the whole graph. For this, we will need the following consequence of Theorem~\ref{wagner}, which describes the structure of a $K_{3,3}$-minor free graph $G$ that is non-planar in a global way (more explicit than the glueing description in Theorem~\ref{wagner}). }

\oldV{The following consequence of Theorem~\ref{wagner} will be used in the proof of Theorem~\ref{K33free}.}

\begin{lemma}\label{K5structure}
Let $G$ be a $2$-vertex-connected $K_{3,3}$-minor-free graph. If $G$ is not planar, then there exist connected subgraphs $G_{i,j}, 1 \leq i<j \leq 5$ of $G$ and distinct vertices $v_1,\ldots,v_5 \in V(G)$ {(see Fig.~\ref{fig:K5struct} for illustration)} such that 
\begin{enumerate}
	\item $E(G)=\bigcup_{i,j}{E(G_{i,j})}$,
	\item $V(G_{i,j}) \cap \{v_1,\ldots,v_5\}=\{v_i,v_j\}$ for all $1 \leq i<j \leq 5$, and
	\item the sets $V(G_{i,j})\setminus \{v_i,v_j\}, 1 \leq i<j \leq 5,$ are pairwise disjoint.
\end{enumerate}
\end{lemma}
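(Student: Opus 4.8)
We argue by induction on $|V(G)|$, using Wagner's structure theorem (Theorem~\ref{wagner}). For the base of the induction, and more generally whenever $G$ is itself one of the building blocks in Theorem~\ref{wagner} (a planar graph or a copy of $K_5$): since $G$ is non-planar it must then be isomorphic to $K_5$, and we may take $v_1,\dots,v_5$ to be its five vertices and let $G_{i,j}$ be the single edge $v_iv_j$, which trivially satisfies (1)--(3). As every non-planar graph has at least five vertices, this in particular covers the case $|V(G)|=5$.

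For the inductive step, assume $G$ is non-planar, $2$-vertex-connected, $K_{3,3}$-minor-free, and not a building block. Fix a construction of $G$ from planar graphs and copies of $K_5$ by $i$-sums ($i\in\{0,1,2\}$), as guaranteed by Theorem~\ref{wagner}, and look at its last sum: this exhibits $G$ as an $i$-sum of two graphs $G^1,G^2$, each itself built from planar graphs and copies of $K_5$ by such sums and hence (by the forward direction of Theorem~\ref{wagner}) $K_{3,3}$-minor-free, with $|V(G^1)|,|V(G^2)|>i$ and thus $|V(G^1)|,|V(G^2)|<|V(G)|$. Since $G$ is $2$-connected we cannot have $i=0$ (this would disconnect $G$) or $i=1$ (the shared vertex would be a cut vertex of $G$), so $i=2$ and $G$ is a $2$-sum of $G^1$ and $G^2$ along a common edge $xy$; that is, $V(G^1)\cap V(G^2)=\{x,y\}$, $E(G^1)\cap E(G^2)=\{xy\}$, and $E(G)=(E(G^1)\cup E(G^2))\setminus D$ for some $D\subseteq\{xy\}$. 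A short argument shows $G^1$ and $G^2$ are again $2$-connected: each is connected (a component avoiding $\{x,y\}$ would already be a component of $G$), and a cut vertex $z$ of, say, $G^1$ would cut off in $G-z$ a component of $G^1-z$ avoiding $\{x,y\}\setminus\{z\}$, hence would be a cut vertex of $G$. Moreover $G^1$ and $G^2$ cannot both be planar, since a $2$-sum of two $2$-connected planar graphs is planar (embed each with $xy$ on the outer face, glue along $xy$, and delete $xy$ if $D\neq\emptyset$), contradicting non-planarity of $G$. So we may assume $G^1$ is non-planar.

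Now apply the induction hypothesis to $G^1$, obtaining distinct vertices $v_1,\dots,v_5\in V(G^1)\subseteq V(G)$ and connected subgraphs $G^1_{i,j}$ of $G^1$ satisfying (1)--(3). By (2) and (3), distinct pieces $G^1_{i,j}$ meet only inside $\{v_1,\dots,v_5\}$ and hence share no edge, so the edge sets of the $G^1_{i,j}$ partition $E(G^1)$; in particular the sum-edge $xy$ lies in exactly one piece, which after relabelling we call $G^1_{1,2}$. Then $x,y\in V(G^1_{1,2})$, so by (2) we have $\{x,y\}\cap\{v_1,\dots,v_5\}\subseteq\{v_1,v_2\}$. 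Define $G_{k,l}:=G^1_{k,l}$ for all $\{k,l\}\neq\{1,2\}$, and let $G_{1,2}$ be the subgraph of $G$ with vertex set $V(G^1_{1,2})\cup V(G^2)$ and edge set $(E(G^1_{1,2})\cup E(G^2))\setminus D$ --- informally, we expand the sum-edge $xy$ inside the piece $G^1_{1,2}$ into the graph $G^2$. Then $E(G)=\bigcup_{i<j}E(G_{i,j})$, giving (1). The graph $G_{1,2}$ is connected, since $G^1_{1,2}-xy$ has at most two components, one containing $x$ and one containing $y$, and the $2$-connected graph $G^2$ contains an $x$--$y$ path avoiding $xy$ reconnecting them. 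Finally, every vertex of $V(G^2)\setminus\{x,y\}$ lies outside $V(G^1)$, hence belongs to no other piece and to none of $v_1,\dots,v_5$, while $x$ and $y$ already satisfy the required incidences inside $G^1_{1,2}\subseteq G_{1,2}$; combined with (2) and (3) for the family $(G^1_{i,j})$, this yields (2) and (3) for $(G_{i,j})$. This completes the induction.

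The argument is conceptually straightforward, and the \textbf{bulk of the work} is bookkeeping in the inductive step: one must track carefully whether the sum-edge $xy$ is present in $G$, $G^1$, $G^2$, and in the piece $G^1_{1,2}$, and verify that ``expanding a single edge into $G^2$'' preserves conditions (2) and (3). The one step needing a genuine (if brief) argument is that a $2$-sum summand of a $2$-connected graph is itself $2$-connected, which is what makes both the application of the induction hypothesis to $G^1$ and the connectedness of $G_{1,2}$ valid.
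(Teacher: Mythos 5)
Your argument is essentially the paper's: both invoke Wagner's theorem, observe that $2$-vertex-connectivity forces the relevant sum to be a $2$-sum along a shared edge $xy$, show that the summands are $2$-connected and that at least one of them, say $G^1$, is non-planar, apply the induction hypothesis (the paper phrases this as a minimum counterexample rather than explicit induction) to $G^1$, and then enlarge the piece containing $xy$ by gluing in $G^2$, removing $xy$ if it is absent from $G$; the verification of properties (1)--(3) and the connectivity of the enlarged piece also match (the paper argues directly that $G^2-xy$ is connected using $2$-connectivity of $G$, you route through $2$-connectivity of $G^2$ --- either works). One omission: in this paper ``graph'' means multigraph, whereas Theorem~\ref{wagner} is stated for simple graphs only, so your proof as written covers only simple $G$. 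The paper closes this with a final reduction (pass to the simplification $\tilde{G}$, apply the simple case, then reinsert the deleted loops and parallel edges into the pieces), and you need some such step, since the lemma is later applied to a minimal counterexample that is not known at that point to be simple. A smaller point worth making explicit: your claim that a cut vertex $z$ of $G^1$ (or $G^2$) cuts off a component avoiding $\{x,y\}\setminus\{z\}$ relies on $xy$ being an edge of both summands (the clique condition in the definition of the $2$-sum), which guarantees that $x$ and $y$ lie in a common component of $G^1-z$; the same fact implicitly underlies the paper's argument.
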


\begin{figure}[ht]
\centering
\includegraphics[scale=0.8]{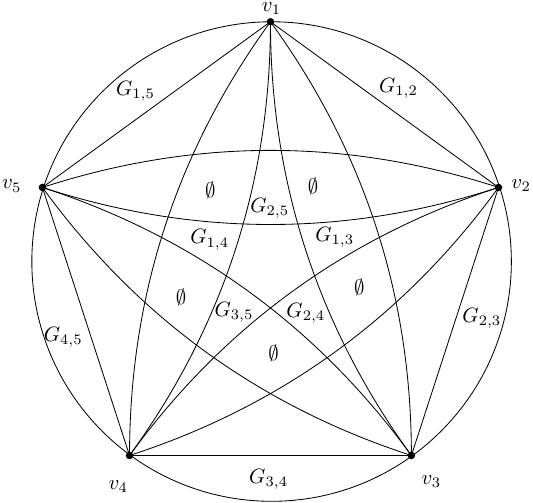}
\caption{Illustration of Lemma \ref{K5structure}: the symbol $\emptyset$ illustrates that while there might be crossings, there are no common vertices of the corresponding subgraphs $G_{i,j}$.}
\label{fig:K5struct}
\end{figure}
\begin{proof}
Let us first prove the statement for simple graphs. Suppose towards a contradiction the claim was false for simple graphs and consider a counterexample $G$ of minimum order. Since $G$ is non-planar and not isomorphic to $K_5$ (otherwise the claim of the lemma holds trivially true), it follows from Theorem~\ref{wagner} that $G$ can be written as the $i$-sum ($i\le 2$) of two simple $K_{3,3}$-minor free graphs $G^1, G^2$. Since $V(G^1) \cap V(G^2)$ forms a vertex-separator of size $i$ in $G$, and since $G$ is assumed to be $2$-vertex-connected, we furthermore know that $i=2${, since $V(G^1)\setminus V(G^2)$ and $V(G^2)\setminus V(G^1)$ are non-empty by definition of an~\mbox{$i$-sum}}. Thus, $E(G^1) \cap E(G^2)$ consists of exactly one edge $e=u_1u_2$. Since $2$-sums of planar graphs are planar, we know that at least one of $G^1, G^2$, say $G^1$, is non-planar. We claim that~$G^1$ has to be $2$-vertex-connected: suppose towards a contradiction there was a vertex $v \in V(G^1)$ such that~\mbox{$G^1-v$} is disconnected. 
Let $X$ be the vertex set of the connected component of~\mbox{$G^1-v$} containing $\{u_1,u_2\} \setminus \{v\}$. Now in $G-v$, no vertex in $V(G^1-v)\setminus X$ has an edge to~$u_1$ or $u_2$, nor to a vertex in $V(G^2) \setminus \{u_1,u_2\}$. Hence, $G-v$ is disconnected, contradicting the assumed $2$-connectivity of~$G$. We therefore conclude that $G^1$ is a non-planar, $2$-connected, simple $K_{3,3}$-minor-free graph, and so by the assumed minimality of $G$, we conclude that there exist connected subgraphs~$(G^1)_{i,j}, 1 \le i<j \le 5$ of $G^1$ and vertices $v_1,\ldots,v_5 \in V(G^1)$ satisfying the properties~\mbox{1--3}\oldV{ claimed in the Lemma}. Suppose without loss of generality that $(G^1)_{1,2}$ is the subgraph containing the edge $e$. We now define an edge-decomposition of $G$ into subgraphs $G_{i,j}, 1 \le i <j \le 5$ as follows: for every pair $\{i,j\}\neq \{1,2\}$, we let $G_{i,j}\coloneqq (G^1)_{i,j}$, while we define $G_{1,2}$ to be obtained from the proper $2$-sum of $(G^1)_{1,2}$ with $G_2$ along $e=u_1u_2$ by removing the edge $e$ if and only if $e \notin E(G)$. Note that the graph $G_{1,2}$ is connected: since $G$ is $2$-connected, $G^2-e$ must be connected, and so each vertex in $(G^1)_{1,2}$ lies in the same connected component as $u_1$ and $u_2$. Let us now show that the graphs~$G_{i,j}, 1\le i<j \le 5$ together with the vertices $v_1,\ldots,v_5$ satisfy the properties~1--3 claimed in the lemma. Property 1 follows directly from the definition of the graphs, and Properties 2 and 3 follow since $\{v_1,\ldots,v_5\}$ and $V(G_{i,j}), \{i,j\} \neq \{1,2\}$ are disjoint from $V(G^2) \setminus \{u_1,u_2\}$. 
Finally, this shows that $G$ satisfies the claim of the \bothV{L}{l}emma, and this contradiction concludes the proof of the \bothV{L}{l}emma for simple graphs.

Now given an arbitrary (multi\=/)graph $G$, let $\tilde{G}$ be the graph obtained from $G$ by deleting all loops and removing parallel edges. It is easy to see that $G$ is $K_{3,3}$-minor free, planar and $2$-vertex-connected if and only if the same is true for $\tilde{G}$. In addition, if $\tilde{G}$ has connected subgraphs $\tilde{G}_{i,j}$ satisfying Properties 1--3, by adding deleted loops and parallel edges again it is straightforward to obtain a decomposition $G_{i,j}$ of the multi-graph $G$ with the same properties. This concludes the proof of the \bothV{L}{l}emma.
\end{proof}

With these tools at hand, we are now ready for the proof of Theorem~\ref{K33free}. We need the notion of \emph{flow-critical graphs}, which was introduced \bothV{in}{by Silva and Lucchesi} \cite{flowcritical} as a possible approach towards Tutte's flow conjectures. Following the terminology of this paper, for $k \in \mathbb{N}$, we call a graph $G$ \emph{vertex-$k$-critical}, if $G$ is not $k$-flowable, but for every pair $u, v$ of distinct vertices, the{ subcontracted} graph $G/\{u,v\}$ (and hence every proper subcontraction of $G${ by Lemma~\ref{lemma:subcontflows}}) is $k$-flowable. Similarly, a graph is called \emph{edge-$k$-critical}, if $G$ is not $k$-flowable, but for every edge $e \in E(G)$, $G/e${ (and hence every graph obtained by contractions from $G$ by {Lemma~\ref{lemma:subcontflows}}}) is $k$-flowable. 
{See Fig.~\ref{fig:vertCrit} for an example illustrating these definitions.}

\begin{figure}[ht]
\centering
\includegraphics[scale=0.6]{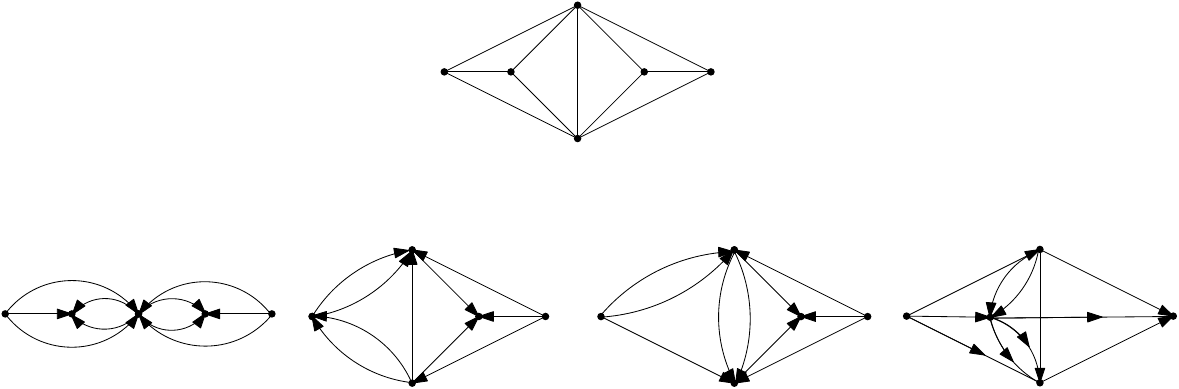}
\caption{Example of a vertex $3$-critical graph (top). It can be easily checked that this graph does not have a nowhere-zero $3$-flow, but all the $4$ non-isomorphic graphs obtained from it by identifying a pair of vertices admit modulo $3$-orientations (shown in the figures in the second row). Since the graph is vertex-$3$-critical, it is also edge-$3$-critical.}
\label{fig:vertCrit}
\end{figure}
Given a graph $G$ and a pair $u_1, u_2$ of vertices, let us say that an orientation $D$ of $G$ is a \emph{near-mod-$3$-orientation} of $G$ that \emph{misses $u_1$ and $u_2$}, if $d_D^+(x) \equiv d_D^-(x) \text{ (mod }3)$ for all vertices $x$ except $u_1$ and $u_2$. Since we have $\sum_{x \in V(G)}{(d_D^+(x) - d_D^-(x))}=0$, the excesses of $u_1$ and $u_2$ satisfy  $\text{exc}_D(u_1) \equiv \alpha \text{ (mod }3)$,  $\text{exc}_D(u_{\bothV{1}{2}}) \equiv -\alpha \text{ (mod }3)$ for some $\alpha \in \{-1,1\}$. Note that by reversing all edges of $D$, we can flip between the values $\alpha$ and $-\alpha$ for the parities of the excesses at $u_1$ and~$u_2$.

\restatekthreethreefree*
\begin{proof}
{Since we already know by Theorem~\ref{thm:3flowsuff} that every $3$-flowable graph is universally $3$-colorable, the task reduces to showing that every $K_{3,3}$-minor free graph which is \bothV{facially}{universally cell} $3$-colorable admits a nowhere zero $3$-flow.}
Suppose \bothV{the}{the latter} statement is false and let $G$ be a counterexample {with $n$ vertices and $m$ edges} minimizing {$n+m$}. This means that $G$ is {$K_{3,3}$-minor-free,} \bothV{facially}{universally cell} $3$-colorable but admits no nowhere-zero $3$-flow. We claim that $G$ has to be edge-$3$-critical: for every edge $e \in E(G)$, the graph $G/e$ is \bothV{facially}{universally cell} $3$-colorable by Proposition~\ref{subcontclosed} and, since it is a minor of $G$, it is $K_{3,3}$-minor free. From the minimality of $G$ we conclude that $G/e$ satisfies the claimed statement and hence admits a nowhere-zero $3$-flow.
Furthermore, $G$ has to be $2$-vertex-connected: clearly, $G$ is connected, \bothV{for}{as} otherwise one of the connected components of $G$ would form a smaller counterexample to the claim. Now suppose towards a contradiction that there was a cut-vertex $v \in V(G)$. Let $X_1,\ldots, X_k, k \ge 2$ be the connected components of $G-v$. For any $i$, the graph $G[X_i \cup \{v\}]$ is isomorphic to the graph obtained from $G$ by contracting all the edges in $G-X_i$. Since $G$ is edge-$3$-critical, \nbothV{this means that}{Lemma \ref{mod3or} implies that} there is a modulo-$3$-orientation $D_i$ of $G[X_i \cup \{v\}]$. Let $D$ be the orientation of $G$ obtained by joining the orientations $D_i, i=1,\ldots,k$. We now have $\text{exc}_{D}(x)=\text{exc}_{D_i}(x)$ for each $x \in X_i$ as well as $\text{exc}_{D}(v)=\sum_{i=1}^{k}{\text{exc}_{D_i}(v)}$. Since each~$D_i$ is a modulo-$3$-orientation, this means that $\text{exc}_{D}(x) \equiv 0 \text{ (mod }3)$ for all $x \in V(G)$. Thus,~$D$ is a modulo-$3$-orientation of $G$. This contradiction shows that $G$ must be $2$-vertex-connected.

Note that by Theorem~\ref{thm:flowcoloringduality}, $G$ is not planar. We can therefore apply Lemma~\ref{K5structure} to $G$ and obtain vertices $v_1,\ldots,v_5 \in V(G)$ and corresponding subgraphs $G_{i,j}, 1 \le i<j \le 5$ satisfying the properties 1--3 from the \bothV{L}{l}emma. Let us call a pair $1 \le i <j \le 5$ \emph{good} if $G_{i,j}$ admits a modulo-$3$-orientation and \emph{bad} if $G_{i,j}$ admits a near-mod-$3$-orientation that misses $v_i$ and $v_j$.

\paragraph{Claim 1.} Each pair $\{i,j\}$ is either good or bad, but not both.

\renewcommand{\qedsymbol}{\ensuremath{\blacktriangleleft}}
\begin{proof}[Proof{ of Claim 1}]
Let $1 \le i<j \le 5$ be given. Consider the graph {$F$} obtained from $G$ by contracting all edges in $E(G) \setminus E(G_{i,j})$. Since $G$ is edge-$3$-critical, \bothV{this graph}{$F$} admits a nowhere-zero $3$-flow. 
Since each $G_{i',j'}, \{i',j'\} \neq \{i,j\}$ is connected,{ in particular the contraction of $G_{i,k}$ and~$G_{k,j}$ for some $k\neq{i,j}$ contracts $v_i$ and $v_j$ into one vertex. Therefore} {$F$} is isomorphic to the graph $G_{i,j}/\{v_i,v_j\}$. Thus $G_{i,j}/\{v_i,v_j\}$ admits a modulo-$3$-orientation{ by Lemma~\ref{mod3or}}. Expanding such an orientation to $G_{i,j}$ by orienting possible edges between $v_i$ and $v_j$ arbitrarily, we obtain an orientation $D_{i,j}$ of $G_{i,j}$ such that $\text{exc}_{D_{i,j}}(x)$ is divisible by $3$ for all $x \in V(G) \setminus \{v_i,v_j\}$. Since the sum of all excesses is $0${ in any directed graph}, this means that \bothV{$\text{exc}_{D_{i,j}}(u_1), \text{exc}_{D_{i,j}}(u_2)$}{$\text{exc}_{D_{i,j}}(v_i), \text{exc}_{D_{i,j}}(v_j)$} are either both divisible by $3$, in which case $D_{i,j}$ is a modulo-$3$-orientation of $G_{i,j}$, or both are not divisible by $3$, which means that $D$ forms a near-mod-$3$-orientation of $G_{i,j}$ missing $v_i$ and $v_j$. This shows that $\{i,j\}$ is good or bad.

Let us now show that $\{i,j\}$ cannot be good and bad at the same time, and suppose towards a contradiction that \bothV{this was the case}{it is good and bad}. This means that for every $s \in \{-1,0,1\}$, there exists an orientation $D^s_{i,j}$ of $G_{i,j}$ such that $\text{exc}_{D^s_{i,j}}(x)$ is divisible by $3$ for all $x \in V(G_{i,j}) \setminus \{v_i,v_j\}$ and $\text{exc}_{D^s_{i,j}}(v_i) \equiv s, \text{exc}_{D^s_{i,j}}(v_j)\equiv -s \text{ (mod }3)$. Consider the graph $G/E(G_{i,j})$ which, by the edge-criticality of $G$, admits a modulo-$3$-orientation. Expanding this orientation to $G\setminus E(G_{i,j})$ we obtain an orientation $\overline{D}_{i,j}$ of $G-(V(G_{i,j}) \setminus \{v_i,v_j\})$ which has excesses divisible by $3$ at all vertices in $V(G) \setminus V(G_{i,j})$. Let $\alpha\coloneqq\text{exc}_{\overline{D}_{i,j}}(v_i) \text{ (mod }3)$. Since the excesses in $\overline{D}_{i,j}$ sum up to zero, we have $\text{exc}_{\overline{D}_{i,j}}(v_j) \equiv -\alpha \text{ (mod }3)$. We now claim that the orientation $D$ of $G$ obtained by joining $\overline{D}_{i,j}$ with the orientation $D^{-\alpha}_{i,j}$ of $G_{i,j}$ defines a modulo-$3$-orientation of $G$: since each vertex $x \in V(G) \setminus \{v_i,v_j\}$ is either only adjacent to edges in $\overline{D}_{i,j}$ or only to edges in $D^{-\alpha}_{i,j}$, its excess is divisible by $3$. On the other hand, by definition of the orientations $D^s_{i,j}, s=-1,0,1$, we have

$$\text{exc}_{D}(v_i)=\text{exc}_{\overline{D}_{i,j}}(v_i)+\text{exc}_{D^{-\alpha}_{i,j}}(v_i) \equiv \alpha-\alpha=0 \text{ (mod }3),$$
$$\text{exc}_{D}(v_j)=\text{exc}_{\overline{D}_{i,j}}(v_j)+\text{exc}_{D^{-\alpha}_{i,j}}(v_j) \equiv -\alpha+\alpha=0 \text{ (mod }3),$$
and hence, the excess at every vertex in the orientation $D$ is indeed divisible by $3$. This contradicts the fact that $G$ has no nowhere-zero flow and shows that our assumption was wrong. Finally, this implies that $\{i,j\}$ is good if and only if it is not bad, proving Claim 1.
\end{proof}

Let us now define an auxiliary simple graph $H$ on the vertex set $\{1,\ldots,5\}$ as follows: a~pair~$\{i,j\}$ with $1 \le i<j \le 5$ forms an edge in $H$ if and only if it is bad.

\paragraph{Claim 2.} The graph $H$ is vertex-$3$-critical.

\begin{proof}[Proof{ of Claim 2}]
Let us first verify that $H$ is not $3$-flowable: suppose towards a contradiction that there was a modulo-$3$-orientation $\vec{H}$ of $H$. For each directed edge $e=(i,j)$ in $\vec{H}$, by definition, there is a near-mod-$3$-orientation $\vec{G}_e$ of $G_{i,j}$ missing only $v_i$ and $v_j$. By reversing the orientation of all edges if required, we may assume that $\text{exc}_{\vec{G}_e}(v_i) \equiv 1 \text{ (mod }3)$ and \mbox{$\text{exc}_{\vec{G}_e}(v_j) \equiv -1 \text{ (mod }3)$}. Furthermore, for every good pair $e=\{i,j\} \in \binom{[5]}{2}\setminus E(H)$, let $\vec{G}_e$ be a modulo-$3$-orientation of $G_{i,j}$.

We now let $\vec{G}$ be the orientation of $G$ obtained by joining the orientations $\vec{G}_e$ for $e \in \binom{[5]}{2}$. We claim that $\vec{G}$ is a modulo-$3$-orientation of $G$.{ The idea is that the excess at vertex $v_k$ provided by $\vec{G}_e$ is the same as the excess provided by the edge $e$ to vertex $k$ in $\vec{H}$. Therefore summing up all excesses of the partial orientations $\vec{G}_e$ gives the same as summing up all excesses of $\vec{H}$, a modulo-3-orientation.}

For every $x \in V(G)$, we either have $x\in V(G_{i,j}) \setminus \{v_i,v_j\}$ for some $1 \le i<j \le 5$, and therefore $\text{exc}_{\vec{G}}(x)=\text{exc}_{\vec{G}_{i,j}}(x) \equiv 0 \text{ (mod }3)$, or $x=v_k$ for some $k \in [5]$ and therefore

$$\text{exc}_{\vec{G}}(\bothV{x}{v_k})=\sum_{e=(k,\ell) \in A(\vec{H})}{\text{exc}_{\vec{G}_e}(\bothV{x}{v_k})}+\sum_{e=(\ell,k) \in A(\vec{H})}{\text{exc}_{\vec{G}_{e}}(\bothV{x}{v_k})}+\sum_{e=\{k,\ell\} \in \binom{[5]}{2}\setminus E(H)}{\text{exc}_{\vec{G}_e}(\bothV{x}{v_k})}$$
$$\equiv \sum_{e=(k,\ell) \in A(\vec{H})}{1}+\sum_{e=(\ell,k) \in A(\vec{H})}{(-1)}+\sum_{e=\{k,\ell\} \in \binom{[5]}{2}\setminus E(H)}{0} $$
$$\equiv \text{exc}_{\vec{H}}(k) \equiv 0 \text{ (mod }3),$$ where we used that $\vec{H}$ is a modulo-$3$-orientation of $H$. Since $x$ was arbitrary, this means that $G$ is $3$-flowable. This contradiction shows that, indeed, $H$ does not admit a nowhere-zero $3$-flow.

Let now $v_i, v_j \in \{v_1,\ldots,v_5\}, i<j$ be given arbitrarily. To show that $H$ is vertex-$3$-critical, we must construct a modulo-$3$-orientation of the graph \bothV{$G/\{v_i,v_j\}$}{$H/\{i,j\}$}. For this, note that by the edge-criticality of $G$, the graph $G/E(G_{i,j})$ admits a modulo-$3$-orientation. This induces a modulo-$3$-orientation on each of the graphs $G_{k,\ell}$ for which $\{k,\ell\} \neq \{i,j\}$ is good, and a near mod-$3$-orientation missing $v_{k}$ and $v_{\ell}$ on each of the graphs $G_{k,\ell}$ for which $\{k,\ell\} \neq \{i,j\}$ is bad. For each pair $\{k,\ell\}$ let us denote by $\text{exc}_{k,\ell}(v_k), \text{exc}_{k,\ell}(v_\ell)$ the excess of the induced orientation of $G_{k,\ell}$ at $v_k$ respectively $v_\ell$. We clearly have $\text{exc}_{k,\ell}(v_k) \equiv -\text{exc}_{k,\ell}(v_\ell) \text{ (mod }3)$ for all $1 \le k<\ell \le 5$ and from the properties of the modulo-$3$-orientation on $G/E(G_{i,j})$ we deduce that for every $k \in \{1,\ldots,5\}\setminus \{i,j\}$, the following holds {(Note that we can exclude the good pairs in this sum since they contribute $0$ to it)}:

\begin{align}\label{eq1}
{\sum_{\substack{\ell \in [5] \setminus \{k\}\\ \{k,\ell\}\in E(H)}}{\hspace{-12pt}\text{exc}_{k,\ell}(v_k)} \equiv }\sum_{\ell \in [5] \setminus \{k\}}{\hspace{-12pt}\text{exc}_{k,\ell}(v_k)} \equiv 0 \text{ (mod }3),
\end{align}

and similarly:

\begin{align}\label{eq2}
{\sum_{\substack{\ell \in [5] \setminus \{i,j\}\\ \{i,\ell\}\in E(H)}}{\hspace{-12pt}\text{exc}_{i,\ell}(v_i)} + \sum_{\substack{\ell \in [5] \setminus \{i,j\}\\ \{j,\ell\}\in E(H)}}{\hspace{-12pt}\text{exc}_{j,\ell}(v_j)} \equiv }\sum_{\ell \in [5]\setminus \{i,j\}}{\hspace{-12pt}(\text{exc}_{i,\ell}(v_i)+\text{exc}_{j,\ell}(v_j))} \equiv 0 \text{ (mod }3).
\end{align}

Let us now define a partial orientation of $H$ as follows: orient an edge $\{k,\ell\} \in E(H)$ with $\{k, \ell\} \neq \{i,j\}$ from $k$ to $\ell$ if and only if $\text{exc}_{k,\ell}(v_k) \equiv 1 \text{ (mod }3)$. Recall that the edges of $H$ correspond to bad pairs $\{k,\ell\}$ only. In \bothV{the}{a} natural way, this partial orientation of $H$ induces a full {modulo-3-}orientation of the multi-graph $H/\{i,j\}$. Th\bothV{e}{is} claim \oldV{now} follows from observing that equations~\eqref{eq1} and~\eqref{eq2} above encode exactly the fact that the excess of each vertex in this orientation of $H/\{i,j\}$ is divisible by $3$. Hence, $H/\{i,j\}$ is $3$-flowable for all $i \neq j \in \{1,\ldots,5\}$, and this conclude{s} the proof of Claim 2.
\end{proof}

\renewcommand{\qedsymbol}{\openbox}

Contrary to the statement of Claim 2, no graph $G_5$ on exactly $5$ vertices is vertex-$3$-critical: we give a case distinction on the minimum degree:
\begin{itemize}
\item Minimum degree 0: The graph $G_5$ has an isolated vertex which we can identify into another one. This essentially deletes the isolated vertex, touching nothing else, yielding a graph which is 3-flowable if and only if $G_5$ is.
\item Minimum degree 1: Identifying two vertices other than the degree 1 vertex yields a graph with \bothV{minimum degree 1}{a degree 1 vertex}, which is thus not \bothV{bridgeless}{universally 3 cell-colorable by Proposition \ref{prop:leaf}} and therefore not 3-flowable{ by Theorem \ref{thm:3flowsuff}}.
\item Minimum degree 2: Identifying a degree 2 vertex $w$ into one of its neighbours $v$ yields a graph $G_4$. If $G_4$ is 3-flowable, we can extend the flow to $G_5$ by replacing the edge from $v$ that was an edge of $w$ in $G_5$ by a directed path of length 2 in the same direction, where $w$ is the middle vertex. 
\item Minimum degree 3 or 4: This means that{ $\overline{G_5}$,} the complement of $G_5${,} has maximum degree~1 or 0, so it has \bothV{0, 1 or 2}{at most two} non-adjacent edges. All three possibilities for such a graph are given in {Fig.}~\ref{fig:mindeg3flow}. As you can see, all of them are 3-flowable.
\end{itemize}

\begin{figure}[ht]
\centering
\includegraphics[scale=1]{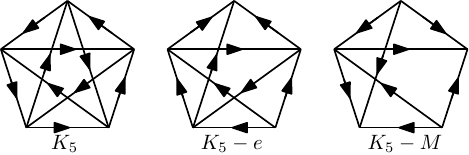}
\caption{Modulo-$3$-orientations for $K_{5}$ with at most 2 independent edges missing.}\label{fig:mindeg3flow}
\end{figure}

This contradiction shows that our initial assumption was wrong and a smallest counterexample $G$ to the claim cannot exist. This concludes the proof of Theorem~\ref{K33free}.
\end{proof}

Our next and final theorem of this section sums up some properties of smallest counterexamples to Conjecture~\ref{main}. We need the following definition: a graph $G$ is called \emph{$\mathbb{Z}_3$-connected}, if for every assignment $p\colon V(G) \rightarrow \mathbb{Z}_3$ with $\sum_{v \in V(G)}{p(v)}=0$ there exists an orientation $D$ of~$G$ such that $\text{exc}_{D}(v) \equiv p(v) \text{ (mod }3)$ for every $v \in V(D)$. Clearly, every $\mathbb{Z}_3$-connected graph is $3$-flowable, but the reverse is not true in general. Small examples of $\mathbb{Z}_3$-connected graphs are~$K_1$ with a loop, $K_2$ with two or more parallel edges, and $K_5$. We need the following auxiliary statements from the literature.

\begin{lemma}[cf. \cite{fan}, Proposition 1.2]\label{contractiblesubgraphs}
Let $G$ be a graph, and let $H \subseteq G$ be a subgraph. If $H$ is $\mathbb{Z}_3$-connected, then $G$ is $3$-flowable if and only if $G/V(H)$ is $3$-flowable.
\end{lemma}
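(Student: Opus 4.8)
The plan is to reduce everything, via Lemma~\ref{mod3or}, to the existence of modulo-$3$-orientations, so that both implications become a matter of gluing an orientation of $E(H)$ to an orientation of $E(G)\setminus E(H)$. Write $X\coloneqq V(H)$ and let $v_X$ be the identification vertex of $G/X$. The edges of $G/X$ incident to $v_X$ are in a natural orientation-preserving bijection with the edges of $G$ having exactly one endpoint in $X$, and the edges of $G/X$ avoiding $v_X$ are literally the edges of $G$ avoiding $X$; edges of $G$ inside $H$ have no counterpart in $G/X$. Consequently, any orientation of $G$ restricts to an orientation of $E(G)\setminus E(H)$ which we may read as an orientation $D'$ of $G/X$, and conversely any orientation of $G/X$ lifts to an orientation of $E(G)\setminus E(H)$. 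The bookkeeping identities I will use are: for any orientation and any $v\in X$, $\mathrm{exc}(v)=\mathrm{exc}_{D|_{E(H)}}(v)+\mathrm{exc}_{D|_{E(G)\setminus E(H)}}(v)$; also $\sum_{v\in X}\mathrm{exc}_{D|_{E(H)}}(v)=0$ since every edge of $H$ has both endpoints in $X$ and contributes $+1$ and $-1$; and $\sum_{v\in X}\mathrm{exc}_{D|_{E(G)\setminus E(H)}}(v)=\mathrm{exc}_{D'}(v_X)$ since each edge of $G$ with exactly one endpoint in $X$ contributes the same amount to both sides under the bijection above.

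For the forward direction, suppose $G$ has a modulo-$3$-orientation $D$ and let $D'$ be the induced orientation of $G/X$. A vertex outside $X$ keeps its excess, which stays divisible by $3$; and by the three identities above, $\mathrm{exc}_{D'}(v_X)=\sum_{v\in X}\mathrm{exc}_{D|_{E(G)\setminus E(H)}}(v)=\sum_{v\in X}\mathrm{exc}_D(v)-\sum_{v\in X}\mathrm{exc}_{D|_{E(H)}}(v)\equiv 0-0\equiv 0\pmod 3$. Hence $D'$ is a modulo-$3$-orientation of $G/X$, so $G/X$ is $3$-flowable. Note this direction does not use that $H$ is $\mathbb{Z}_3$-connected (nor even that $H$ is connected).

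For the converse, suppose $D'$ is a modulo-$3$-orientation of $G/X$. Lift it to an orientation of the edges in $E(G)\setminus E(H)$, obtaining a partial orientation $D_0$ of $G$ with $E(H)$ still undirected, and for $v\in X$ put $p(v)\coloneqq -\mathrm{exc}_{D_0}(v)\bmod 3\in\mathbb{Z}_3$. Then $\sum_{v\in X}p(v)\equiv -\sum_{v\in X}\mathrm{exc}_{D_0}(v)=-\mathrm{exc}_{D'}(v_X)\equiv 0\pmod 3$, so the zero-sum condition in the definition of $\mathbb{Z}_3$-connectivity is met and, since $H$ is $\mathbb{Z}_3$-connected, there is an orientation $D_H$ of $H$ with $\mathrm{exc}_{D_H}(v)\equiv p(v)\pmod 3$ for every $v\in X$. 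Let $D\coloneqq D_0\cup D_H$. A vertex outside $X$ has the same excess as in $D'$, hence divisible by $3$; for $v\in X$ we get $\mathrm{exc}_D(v)=\mathrm{exc}_{D_0}(v)+\mathrm{exc}_{D_H}(v)\equiv \mathrm{exc}_{D_0}(v)+p(v)\equiv 0\pmod 3$. Thus $D$ is a modulo-$3$-orientation of $G$, so $G$ is $3$-flowable, which completes the proof.

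Essentially all of the content is in the bookkeeping; the single substantive step, and the only place that can go wrong, is checking $\sum_{v\in X}p(v)=0$ in $\mathbb{Z}_3$ so that $\mathbb{Z}_3$-connectivity of $H$ actually applies — which, as above, follows directly from $D'$ being a modulo-$3$-orientation. I anticipate no real obstacle beyond being careful that orientations of $G/X$ and of $E(G)\setminus E(H)$ are matched up in an orientation-preserving way, so that the excess identities hold exactly.
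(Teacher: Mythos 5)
The paper does not actually prove this lemma---it is imported from the literature (cf.\ \cite{fan}, Proposition~1.2)---so there is no in-paper proof to compare against; your argument is the standard one (reduce to modulo-$3$-orientations via Lemma~\ref{mod3or}, contract, then use $\mathbb{Z}_3$-connectivity of $H$ to repair the excesses inside $V(H)$), and its core is correct.

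One imprecision should be fixed, though. You identify $E(G)\setminus E(H)$ with $E(G/X)$, but $H$ is an arbitrary subgraph, so $G$ may contain edges with both endpoints in $X=V(H)$ that are \emph{not} edges of $H$; under the paper's definition of $G/X$ these edges simply disappear (they do not even become loops at $v_X$). In the forward direction this is harmless: such edges contribute $+1$ and $-1$ to two vertices of $X$, so your identity $\sum_{v\in X}\text{exc}_{D|_{E(G)\setminus E(H)}}(v)=\text{exc}_{D'}(v_X)$ still holds. In the converse direction, however, your lift $D_0$ of $D'$ orients only the edges with at most one endpoint in $X$, so the final orientation $D=D_0\cup D_H$ leaves those extra internal edges unoriented and is not an orientation of $G$. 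The repair is immediate: orient those edges arbitrarily, include them in $D_0$ (they do not affect $\sum_{v\in X}\text{exc}_{D_0}(v)$, hence the zero-sum check for $p$ survives), and then apply $\mathbb{Z}_3$-connectivity of $H$ exactly as you do; equivalently, observe that adding edges inside $V(H)$ preserves $\mathbb{Z}_3$-connectivity and apply your argument to $H$ together with these edges. With that one-line amendment the proof is complete, and note this generality is actually needed in the paper, e.g.\ when the lemma is invoked in Theorem~\ref{thm:mincex} for subgraphs such as a $K_2$ with two parallel edges inside a possibly larger multigraph.
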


\begin{lemma}[cf. \cite{flows}, Lemma 4.1.3]\label{smalledgecuts}
Let $G$ be a bridgeless graph. Assume that $G$ is not $3$-flowable and has an (inclusion-wise) minimal edge\noldV-cut $S$ of size at most $3$. Let $X_1, X_2$ be the components of $G-S$. Then either $G/X_1$ or $G/X_2$ is not $3$-flowable.
\end{lemma}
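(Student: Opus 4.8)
The plan is to argue by contradiction using Lemma~\ref{mod3or}: assuming that \emph{both} $G/X_1$ and $G/X_2$ are $3$-flowable, I would take a modulo-$3$-orientation of each quotient and glue them into a modulo-$3$-orientation of $G$, contradicting the hypothesis that $G$ is not $3$-flowable. First I would record two easy structural facts. We may assume $G$ is connected (otherwise replace $G$ by the component in which the edges of $S$ lie). Since $S$ is an inclusion-minimal edge-cut of a connected graph, $G-S$ has exactly two components, namely $X_1$ and $X_2$; and since $G$ is bridgeless we have $|S|\in\{2,3\}$. Write $S=\{e_1,\ldots,e_s\}$ with $s=|S|$, where $e_i$ has one endpoint $a_i\in X_1$ and one endpoint $b_i\in X_2$ (coincidences among the $a_i$, and among the $b_i$, being permitted). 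Denote by $v_1\in V(G/X_1)$ and $v_2\in V(G/X_2)$ the vertices obtained by identifying $X_1$ resp.\ $X_2$; by definition all edges incident to $v_1$ in $G/X_1$ are the edges of $S$, and similarly for $v_2$, and $E(G[X_1])$, $E(G[X_2])$, $S$ partition $E(G)$.

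Let $D_1$ be a modulo-$3$-orientation of $G/X_1$ and $D_2$ one of $G/X_2$. The conceptual heart of the proof is that the small size of $S$ pins down how the cut-edges are oriented. Indeed $\text{exc}_{D_1}(v_1)$ is an integer of the same parity as $s$, of absolute value at most $s$, and divisible by $3$; likewise for $\text{exc}_{D_2}(v_2)$. If $s=3$ this forces $\text{exc}_{D_1}(v_1)=\pm 3$, so all three cut-edges point out of $v_1$ in $D_1$ or all three point into $v_1$; since reversing all arcs of a modulo-$3$-orientation again yields a modulo-$3$-orientation, we may reverse $D_1$ if necessary so that $e_1,e_2,e_3$ all point out of $v_1$ in $D_1$, and likewise reverse $D_2$ if necessary so that $e_1,e_2,e_3$ all point into $v_2$ in $D_2$. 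If $s=2$ then $\text{exc}_{D_i}(v_i)=0$, so exactly one cut-edge points out of $v_i$ and one points in; after relabelling $e_1,e_2$ we may assume $e_1$ points out of $v_1$ and $e_2$ into $v_1$ in $D_1$, and, reversing $D_2$ if necessary, that $e_1$ points into $v_2$ and $e_2$ out of $v_2$ in $D_2$.

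With this alignment in hand I would define an orientation $D$ of $G$ by orienting the edges of $G[X_1]$ as in $D_2$, the edges of $G[X_2]$ as in $D_1$, and the cut-edges by $e_i=(a_i,b_i)$ for all $i$ when $s=3$, resp.\ $e_1=(a_1,b_1)$ and $e_2=(b_2,a_2)$ when $s=2$. By construction the orientation chosen for each cut-edge agrees, at its $X_1$-endpoint, with the orientation it has in $D_2$ (where $v_2$ plays the role of the collapsed set $X_2$), and, at its $X_2$-endpoint, with the orientation it has in $D_1$. Hence every $v\in X_1$ has exactly the same in- and out-arcs in $D$ as in $D_2$, so $\text{exc}_D(v)=\text{exc}_{D_2}(v)\equiv 0\pmod 3$, and symmetrically $\text{exc}_D(v)=\text{exc}_{D_1}(v)\equiv 0\pmod 3$ for every $v\in X_2$. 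Thus $D$ is a modulo-$3$-orientation of $G$, so by Lemma~\ref{mod3or} $G$ is $3$-flowable, a contradiction.

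I expect the one point requiring genuine care in a full write-up to be the last paragraph: one must verify that, passing from $D_2$ to $D$, the incidences at a vertex $a_i$ are preserved even when $a_i=a_j$ for some $j\neq i$ (so that several cut-edges hang off the same vertex of $X_1$), and symmetrically at the $b_i$, and that no cut-edge is inadvertently counted twice. Everything else --- the reduction to a connected graph, the fact that an inclusion-minimal cut induces exactly two components, and the parity-and-divisibility argument fixing the orientations of the cut-edges --- is short and standard.
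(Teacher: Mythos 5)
The paper does not prove this lemma at all --- it is imported verbatim from the literature (Zhang, Lemma~4.1.3) --- so there is no internal proof to compare against; what matters is whether your self-contained argument is sound, and it is. Your key observations are all correct: $|S|\in\{2,3\}$ by bridgelessness, every edge of the inclusion-minimal cut joins $X_1$ to $X_2$ so that $E(G[X_1])$, $E(G[X_2])$, $S$ partition $E(G)$ and $v_i$ has degree $|S|$ in $G/X_i$; the divisibility-plus-parity argument correctly forces the cut edges to be all-out/all-in at the contracted vertex when $|S|=3$ and one-in/one-out when $|S|=2$; and since reversing all arcs preserves the modulo-$3$ property, the two orientations can be aligned so that each cut edge receives a single orientation consistent with both $D_1$ (at its $X_2$-end) and $D_2$ (at its $X_1$-end). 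The per-edge consistency you insist on in the last paragraph is exactly what makes the vertex-level bookkeeping work even when several cut edges share an endpoint, so the glued $D$ is a modulo-$3$-orientation of $G$ and Lemma~\ref{mod3or} gives the contradiction. (Your opening remark about replacing $G$ by the component containing $S$ is unnecessary: the hypothesis that $G-S$ has exactly two components already forces $G$ to be connected.) Compared with the textbook proof, which glues nowhere-zero $\mathbb{Z}_3$-flows on the two contractions after adjusting signs so that the flow values agree across the small cut, your argument is the same gluing idea translated into the modulo-$3$-orientation language of Lemma~\ref{mod3or}; this phrasing fits the paper's toolkit particularly well, at the mild cost of the explicit case distinction on $|S|$.
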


\begin{theorem}\label{thm:mincex}
Let $G$ be a counterexample to Conjecture~\ref{main} such that the claim of Conjecture~\ref{main} is satisfied for all graphs $G'$ with $|V(G')|<|V(G)|$ or $|V(G')|=|V(G)|$ and $|E(G')|<|E(G)|$. Then
\begin{itemize}
\item $G$ is vertex-$3$-critical,
\item $G$ contains no $\mathbb{Z}_3$-connected subgraph except $K_1$ and (thus) is a simple graph,
\item $G$ is $3$-edge-connected and every $3$-edge-cut consists of the edges incident to a cubic vertex,
\item $G$ has a vertex of degree at least $4$ and \oldV{has }a $K_{3,3}$-minor.
\end{itemize}
\end{theorem}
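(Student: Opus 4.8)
We plan to push the minimality of $G$ through a short list of reduction arguments, using Proposition~\ref{ident} (facial $3$-colorability is preserved under subcontractions) as the workhorse. The one tool we use repeatedly is the following \emph{basic move}: if $G'$ is a subcontraction of $G$ and either $|V(G')|<|V(G)|$, or $|V(G')|=|V(G)|$ and $|E(G')|<|E(G)|$, then $G'$ is facially $3$-colorable by Proposition~\ref{ident}, every subcontraction of $G'$ is a subcontraction of $G$ and hence is not isomorphic to any $K_{3,n}^+$ with $n\ge 4$, so by the choice of $G$ the graph $G'$ is $3$-flowable; since $G$ itself is not $3$-flowable, this will always be the source of a contradiction. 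As a preliminary we first record that $G$ is connected: otherwise choose a connected component $G_0$ of $G$ that is not $3$-flowable (one exists, since $3$-flowability of a graph is equivalent to $3$-flowability of each of its components) and contract the union of all other components together with one fixed vertex of $G_0$ to a single point. This is a subcontraction of $G$ with strictly fewer vertices which, up to some loops, is isomorphic to $G_0$, hence is $3$-flowable by the basic move, so $G_0$ is $3$-flowable, a contradiction.

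\emph{First bullet.} $G$ is not $3$-flowable, and for any two distinct vertices $u,v$ the graph $G/\{u,v\}$ has fewer vertices, so it is $3$-flowable by the basic move; thus $G$ is vertex-$3$-critical. \emph{Second bullet.} Suppose $H\subseteq G$ is $\mathbb{Z}_3$-connected with $H\ne K_1$. A $\mathbb{Z}_3$-connected graph other than $K_1$ has no isolated vertex, so either $|V(H)|\ge 2$, in which case $G/V(H)$ has strictly fewer vertices than $G$, or $H$ is a single vertex carrying at least one loop, in which case $G/V(H)$ has the same vertex set but strictly fewer edges; in both cases the basic move makes $G/V(H)$ $3$-flowable, and then Lemma~\ref{contractiblesubgraphs} forces $G$ to be $3$-flowable, a contradiction. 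Hence $K_1$ is the only $\mathbb{Z}_3$-connected subgraph of $G$; since a single loop and a pair of parallel edges each span a $\mathbb{Z}_3$-connected subgraph, $G$ is simple.

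\emph{Third bullet.} First we bound degrees. By Proposition~\ref{leaf}, a facially $3$-colorable graph has no vertex of degree $1$. There is also no vertex $w$ of degree $2$: its two neighbours $a,b$ are distinct because $G$ is simple, the graph $G/\{w,a\}$ is exactly the graph obtained from $G$ by suppressing $w$, and since every modulo-$3$-orientation of $G$ is forced to have excess $0$ at the degree-$2$ vertex $w$, it is straightforward that $G$ is $3$-flowable if and only if $G/\{w,a\}$ is; but the latter holds by the basic move, contradicting that $G$ is not $3$-flowable. Next, $G$ has no bridge: a bridge $e=uv$ can always be drawn so that it borders the outer face on both sides (draw the two sides of $G-e$ with $u$ and $v$ on their respective outer boundaries and join them by $e$ through the outer region), yielding a drawing with a self-touching face and hence no face-$3$-coloring, contradicting facial $3$-colorability. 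Finally, let $S$ be an inclusion-minimal edge-cut of $G$ with $|S|\le 3$, and let $X_1,X_2$ be the two sides of $G-S$. As $G$ is bridgeless and not $3$-flowable, Lemma~\ref{smalledgecuts} says that, say, $G/X_1$ is not $3$-flowable; by the basic move this forces $|X_1|=1$, say $X_1=\{w\}$, so $S$ is precisely the edge set at $w$ and $|S|=d_G(w)\ge 3$. Hence $|S|=3$, which both excludes edge-cuts of size at most $2$ (so $G$ is $3$-edge-connected) and shows that every $3$-edge-cut is the set of edges incident to a cubic vertex.

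\emph{Fourth bullet} is immediate from the theorems already established: by Theorem~\ref{scubic} a facially $3$-colorable graph of maximum degree at most $3$ is $3$-flowable, so $G$, being facially $3$-colorable but not $3$-flowable, has a vertex of degree at least $4$; and by Theorem~\ref{K33free} a facially $3$-colorable $K_{3,3}$-minor-free graph is $3$-flowable, so $G$ has a $K_{3,3}$-minor. The step that we expect to require the most care is the exclusion of degree-$2$ vertices in the third bullet: one must verify that the identification $G/\{w,a\}$ really does behave like an edge-suppression (in particular that no loop is created, which uses that subcontractions \emph{drop} rather than keep edges with both endpoints identified) and that this operation preserves the existence of a modulo-$3$-orientation. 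A secondary but pervasive subtlety is to keep the distinction between subcontractions and general minors straight at every reduction, so that the hypothesis of having no $K_{3,n}^+$-subcontraction is legitimately inherited by each smaller graph we pass to.
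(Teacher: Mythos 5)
Your proof is correct and follows essentially the same route as the paper: Proposition~\ref{ident} together with the minimality assumption gives vertex-$3$-criticality, Lemma~\ref{contractiblesubgraphs} rules out $\mathbb{Z}_3$-connected subgraphs (hence loops and parallel edges, so simplicity), Lemma~\ref{smalledgecuts} yields the $3$-edge-connectivity statement, and Theorems~\ref{scubic} and~\ref{K33free} give the last bullet. The only cosmetic difference is that you eliminate degree-$2$ vertices by suppression before the edge-cut analysis, whereas the paper handles them inside that analysis by contracting an incident edge.
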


\begin{proof}
Since $G$ is a counterexample to Conjecture~\ref{main}, $G$ has no \bothV{$K_{3,n}^+$}{$K_{3,n-3}^+$} with $n \ge 7$ as a subcontraction, is \bothV{facially}{universally cell} $3$-colorable (and hence bridgeless) but not $3$-flowable. For every distinct $u, v \in V(G)$, by Proposition~\ref{subcontclosed} also $G/\{u,v\}$ is \bothV{facially}{universally cell} $3$-colorable. Since $G$ has no subcontraction isomorphic to a \bothV{$K_{3,n}^+, n \ge 4$}{$K_{3,n-3}^+, n \ge 7$}, the same must be true for the subcontraction $G/\{u,v\}$ of $G$. Since $|V(G/\{u,v\})|<|V(G)|$, $G/\{u,v\}$ is $3$-flowable. This proves that~$G$ is vertex-$3$-critical. 
From Lemma~\ref{contractiblesubgraphs} and the vertex-criticality of $G$ we deduce that $G$ contains no $\mathbb{Z}_3$-connected subgraphs of order at least two.
 This fact now rules out parallel edges since $K_2$ with two or more parallel edges \bothV{are}{is} $\mathbb{Z}_3$-connected. {Furthermore, $G$ has no loops by edge\noldV-minimality, as they can be added to a modulo-$3$-orientation by orienting them arbitrarily since they do not impact the excess of their vertex.} Thus $G$ is simple.
 From Lemma~\ref{smalledgecuts} \oldV{and the vertex-criticality of $G$,} it follows that if $T$ is a\bothV{n}{ minimal} edge\noldV-cut in $G$ of size at most $3$ separating the parts $X_1$ and $X_2$ of $V(G)$, then {$G/X_1$ or $G/X_2$ is not $3$-flowable. By vertex-criticality all proper subcontractions of $G$ are $3$-flowable, thus one of $X_1$ or $X_2$ must be trivial, that is} $\min\{|X_1|,|X_2|\}=1$. Hence, in this case there is a vertex $v \in V(G)$ such that $T$ consists of all edges incident to $v$. If $|T|=2$, then $v$ is a vertex of degree $2$. Then, however, contracting one of the edges incident with $v$ produces a graph smaller than $G$ which still has no $3$-flow, contradicting the minimality assumption on $G$. Hence, we must have $|T| \ge 3$ for every edge-cut $T$, showing that $G$ is $3$-edge-connected. It also follows that every $3$-edge-cut consists of the edges incident to a cubic vertex, hence we have proved the third item. The last item follows directly from Theorem~\ref{scubic} and Theorem~\ref{K33free}.
\end{proof}

\section{Conclusive Remarks}\label{sec:schluss}
Apart from the obvious challenges to decide Conjecture~\ref{main} and to obtain a better understanding of the class of \bothV{facially}{universally cell} $3$-colorable graphs, we have an interesting open question towards the computational complexity of recognizing \bothV{facially}{universally cell} $3$-colorable graphs. \bothV{For planar graphs, \NP-completeness can be deduced as follows.}{Recall that in Corollary~\ref{planarhard} we have established the \NP-completeness of recognising whether a given \emph{planar} graph is universally cell $3$-colorable.}

While this clearly suggests hardness for general graphs as well, containment in \NP\ remains unclear, since \bothV{facial}{universal cell} $3$-colorability cannot be verified via $3$-flowability in this case. 
%We can generalize our result slightly to graphs of constant crossing number.
%
%\begin{corollary}
%Let $c\in \N$. Deciding whether a given graph of crossing number at most $c$ is \bothV{facially}{universally cell} $3$-colorable is \NP -complete.
%\end{corollary}
%
%
%\begin{proof}
%For planar graphs
%\end{proof}
%
%The general question remains unsolved though.

\begin{question}
Is deciding whether an input graph is \bothV{facially}{universally cell} $3$-colorable contained in~\NP?
\end{question}

\paragraph{Acknowledgements} We are indebted to Andrew Newman, G\"{u}nter Rote, and L\'{a}szl\'{o} Kozma, who have been involved in fruitful discussions during the Problem Solving Workshop of the Research Training Group `Facets of Complexity' at Kloster Chorin in December 2019.

\appendix
\section{Appendix: Proof of Proposition~\ref{prop:topssuffice}}
\restatetopssuffice*
\begin{proof}
Suppose towards a contradiction that all good drawings of $G$ are \bothV{face-}{cell }$3$-colorable, but that there exists a drawing $\Gamma$ of $G$ which is not \bothV{face-}{cell }$3$-colorable. Further suppose $\Gamma$ is chosen such that among all non-\bothV{face-}{cell }$3$-colorable drawings of \bothV{$\Gamma$}{$G$}, it minimizes the number of triples $(e_1,e_2,p)$, where $e_1, e_2 \in E(G)$ are distinct edges and $p \in \gamma^\circ(e_1) \cap \gamma^\circ(e_2)$, or $e_1=e_2$ and $p$ is a self-intersection of the edge $e_1$. In the rest of the proof, we call such triples \emph{crossing triples}. 

For the case that $\Gamma$ contains three or more edges intersecting in a common point $p$, we can consider $\varepsilon>0$ small enough such that $B_\varepsilon(p)$ contains no other vertices or intersection-points of~$\Gamma$. We can then redraw the edges within the ball $B_\varepsilon(p)$ such that they pairwise intersect once within the ball, but at every such intersection, we have only two intersecting edges. A simple way of achieving this is by glueing a drawing of a simple pseudoline-arrangement into the ball~$B_\varepsilon(p)$ such that every pseudoline connects a pair of points on the boundary of $B_\varepsilon(p)$ belonging to the same edge of $\Gamma$. This local redrawing-process is illustrated in {Fig.}~\ref{highdegree}.

\begin{figure}[ht]
\centering
\includegraphics[scale=1]{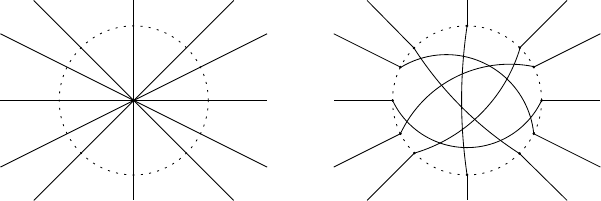}
\caption{Locally replacing a common intersection of more than two edges by a simple pseudoline-arrangement.}\label{highdegree}
\end{figure}

Note that this process changes the dual graph $G^\top(\Gamma)$ only in a way that new vertices and edges are added, but no connections between originally adjacent \bothV{face}{cell}s are being lost. Hence, the chromatic number of $G^\top(\Gamma)$ cannot decrease by this process, and hence $\Gamma$ is still not \bothV{face-}{cell }$3$-colorable afterwards.
Further note that the process leaves the number of crossing triples unaffected, and hence the minimality assumption on $\Gamma$ remains valid. Hence, possibly after performing this operation at every intersection of more than two edges in $\Gamma$, we may assume from now on that at most two edges in $\Gamma$ intersect in a common point.

Since $\Gamma$ is not \bothV{face-}{cell }$3$-colorable, $\Gamma$ is not a good drawing. By our definition of a good drawing, this means that at least one of the following cases must occur.

\begin{figure}[ht]
\centering
\includegraphics[scale=0.7]{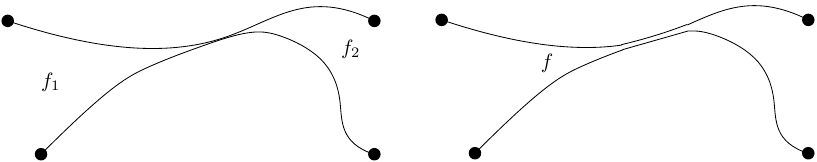}
\caption{Removing a touching.}\label{touching}
\end{figure}

\paragraph{Case 1.} There is a pair $e_1=u_1v_1, e_2=u_2v_2$ of   (possibly adjacent or equal) edges that touch in $p$. By locally rerouting $\gamma(e_1)$ and $\gamma(e_2)$ in a small neighborhood around $p$, we can create the curves $\gamma^\ast(e_i), i=1,2$ from $u_i$ to $v_i$ by avoiding the touching at $p$ and leaving a small gap between the curves, see {Fig.}~\ref{touching} for an illustration. Replacing $\gamma(e_i), i=1,2$ by $\gamma^\ast(e_i), i=1,2$ yields a new drawing 
$\Gamma^\ast$ of $G$, in which $(e_1,e_2,p)$ is no crossing triple any more, and no new intersections between edges have been created. Hence, the number of crossing triples in $\Gamma^\ast$ is smaller than in~$\Gamma$. By the minimality assumption on $\Gamma$, this means that $\Gamma^\ast$ admits a \bothV{face-}{cell }$3$-coloring. Let~$f_1$ and~$f_2$ be the two \bothV{faces}{cells} of $\Gamma$ incident to $p$ which are merged into a common greater \bothV{face}{cell} $f$ when moving from $\Gamma$ to $\Gamma^\ast$. It is now clear that given any \bothV{face-}{cell }$3$-coloring of $\Gamma^\ast$, by assigning to both $f_1, f_2$ the color of $f$ in the proper coloring of $\Gamma^\ast$ yields a proper \bothV{face-}{cell }$3$-coloring of $\Gamma$. This contradicts our initial assumption on $\Gamma$, so $\Gamma$ cannot contain any touchings.

\begin{figure}[ht]
\centering
\includegraphics[scale=0.7]{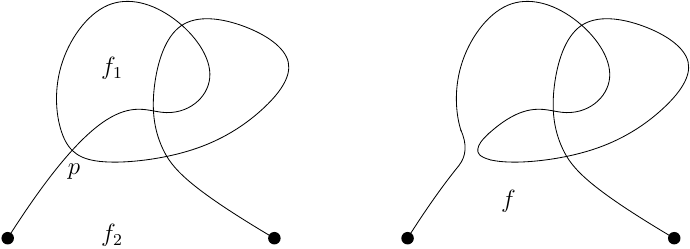}
\caption{Removing a self-intersection of an edge.}\label{selfsec}
\end{figure}

\paragraph{Case 2.} There is an edge $e \in E(G)$ which is either a non-loop edge such that $\gamma:=\gamma(e)$ self-intersects or a loop-edge that self-intersects in its interior. In this case, let $p$ be such a point of self-intersection. Let $\delta \subseteq \gamma$ be a closed curve starting and ending at $p$. As illustrated in {Fig.}~\ref{selfsec}, we can now reroute $\gamma$ such that it now traverses the loop $\delta$ in opposite direction, naming it~$\gamma^\ast$. We do not create new intersections between edges by modifying $\gamma$ into $\gamma^\ast$, nor delete any intersections, and hence in the drawing $\Gamma^\ast$ obtained from $\Gamma$ by replacing $\gamma$ with $\gamma^\ast$, the number of crossing triples is the same. $\Gamma^\ast$ contains a touching though and still fulfills the minimality conditions of $\Gamma$, which is impossible by Case 1, ruling out Case 2.

%admits a face-$3$-coloring. Moving from $\Gamma$ to $\Gamma^\ast$, two faces, say $f_1$ and $f_2$, are merged into a unified face $f$ in $\Gamma^\ast$, while the adjacencies to and between the remaining faces stay unchanged. It is now clear that given any face-$3$-coloring of $\Gamma^\ast$, by assigning to both $f_1, f_2$ the color of $f$ in the proper coloring of $\Gamma^\ast$ yields a proper face-$3$-coloring of $\Gamma$. This contradicts our initial assumption on $\Gamma$ and rules out Case 2.

\begin{figure}[ht]
\centering
\includegraphics[scale=0.7]{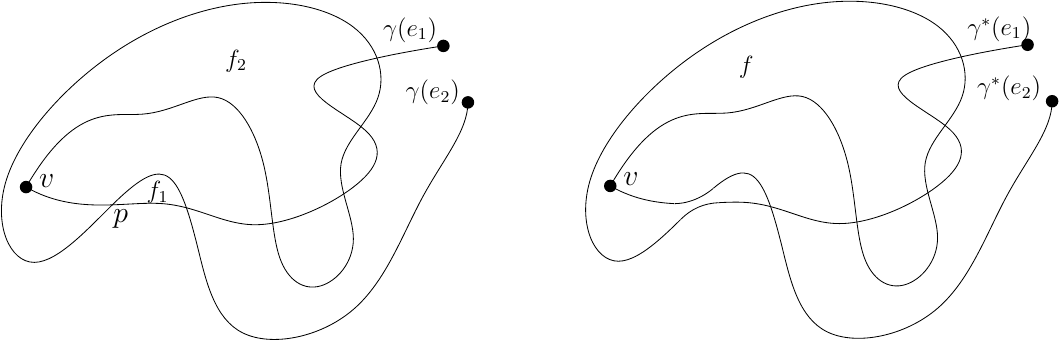}
\caption{Removing an intersection of two adjacent edges.}\label{adjacentedges}
\end{figure}

\paragraph{Case 3.} There is a pair $e_1, e_2$ of distinct edges with a common endpoint $v$ such that we have \mbox{$\gamma^\circ(e_1) \cap \gamma^\circ(e_2)=\{p\}$}. Then \emph{rerouting} at $p$, that is, exchanging the pieces of the curves $\gamma(e_i)$,~\mbox{$i=1,2$}, between $v$ and $p$, yields a pair of new curves~$\gamma^\ast(e_1), \gamma^\ast(e_2)$. See {Fig.}~\ref{adjacentedges} for an illustration. Replacing $\gamma(e_i), i=1,2$ by $\gamma^\ast(e_i), i=1,2$ yields a new drawing $\Gamma^\ast$ of $G$, in which~$(e_1,e_2,p)$ is a touching, but the number of crossing triples in $\Gamma^\ast$ is the same as in $\Gamma$. So the minimality assumption of $\Gamma$ holds for $\Gamma^\ast$ as well and by Case 1, this is impossible and rules out Case 3.

\begin{figure}[ht]
\centering
\includegraphics[scale=0.7]{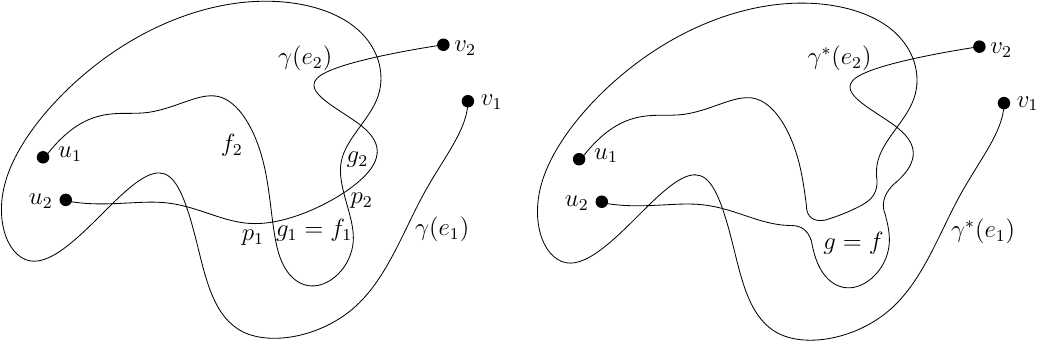}
\caption{Removing intersections of two non-adjacent edges.}\label{nonadjacentedges}
\end{figure} 

\paragraph{Case 4.} There is a pair $e_1=u_1v_1, e_2=u_2v_2$ of distinct non-adjacent edges such that \mbox{$\gamma^\circ(e_1) \cap \gamma^\circ(e_2)$} contains at least two distinct points. Let $p_1 \neq p_2 \in \gamma^\circ(e_1) \cap \gamma^\circ(e_2)$ be two such points. Then \emph{rerouting} at $p_1$ and $p_2$, that is, exchanging the segments of the curves $\gamma(e_i), i=1,2$ between $p_1$ and $p_2$, yields a pair of new curves $\gamma^\ast(e_i), i=1,2$ routed between $u_i$ and $v_i$. See \bothV{Figure}{Fig.}~\ref{nonadjacentedges} for an illustration. Replacing $\gamma(e_i), i=1,2$ by $\gamma^\ast(e_i), i=1,2$ yields a new drawing $\Gamma^\ast$ of~$G$, in which $(e_1,e_2,p_1)$ and $(e_1,e_2,p_2)$ are touchings, but the number of crossing triples in $\Gamma^\ast$ is the same as in $\Gamma$. So the minimality assumption of $\Gamma$ holds for $\Gamma^\ast$ as well and by Case 1, this is a contradiction, so it rules out Case 4.

\paragraph{Case 5.} There is a loop $e \in E(G)$ incident to a vertex $v \in V(G)$ such that $\gamma(e)$ intersects other edges in the drawing $\Gamma$. In this case, we can define a new drawing $\Gamma^\ast$ of $G$ which is obtained from~$\Gamma$ by first removing $\gamma(e)$ from the drawing and then redrawing the loop $e$ within a \bothV{face}{cell} of $\Gamma-\gamma(e)$ incident with $v$, such that it does not intersect any other feature of the drawing. See \bothV{Figure}{Fig.}~\ref{loop} for an illustration. Clearly, the number of crossing triples in $\Gamma^\ast$ is strictly smaller than in $\Gamma$. By the minimality assumption on $\Gamma$, this means that $\Gamma^\ast$ admits a \bothV{face-}{cell }$3$-coloring. By redrawing the crossing-free loop $e$ in $G_\text{isc}(\Gamma^\ast)$ such that it takes the same position as in $\Gamma$, we obtain a drawing $\Gamma'$ of $G_\text{isc}(\Gamma^\ast)$ whose induced cell-decomposition is the same as the one induced by $\Gamma$. It follows now from Corollary~\ref{cor:crossingsmakeiteasier} that with $\Gamma^\ast$ also $\Gamma$ must have a \bothV{face-}{cell }$3$-coloring. This contradicts our initial assumption on $\Gamma$ and rules out Case 5.

\begin{figure}[ht]
\centering
\includegraphics[scale=0.7]{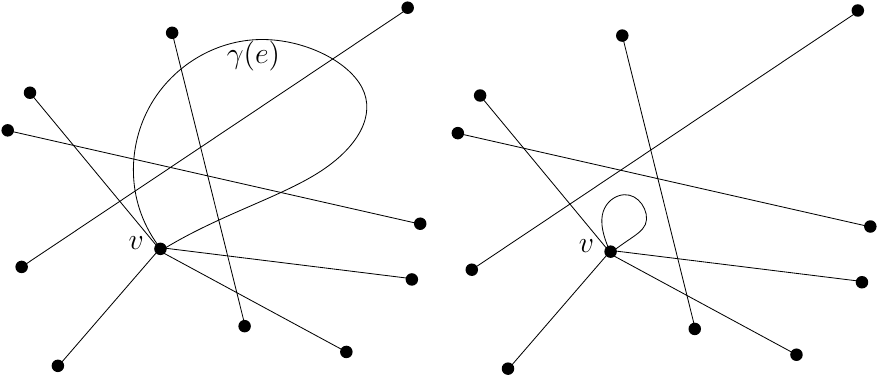}
\caption{Redrawing a loop to avoid crossings.}\label{loop}
\end{figure}

Since we have arrived at a contradiction in each case, we conclude that our initial assumption, namely that there is a drawing $\Gamma$ of $G$ which is not \bothV{face-}{cell }$3$-colorable, was wrong. Hence, $G$ is \bothV{facially}{universally cell} $3$-colorable, and this concludes the proof of the proposition.
\end{proof}

\end{document}